\numberwithin{equation}{section}
\colorlet{linkequation}{red}
\newcommand*{\SavedEqref}{}
\let\SavedEqref\eqref
\renewcommand*{\eqref}[1]{%
  \begingroup
    \hypersetup{
      linkcolor=linkequation,
      linkbordercolor=linkequation,
    }%
    \SavedEqref{#1}%
  \endgroup
}
\providecommand{\abs}[1]{\left| #1\right|}
\providecommand{\norm}[1]{\left|\left|#1\right|\right|}
\providecommand{\wt}[1]{\widetilde{#1}}
\providecommand{\wh}[1]{\hat{#1}}
\let \div \relax
\DeclareMathOperator{\tr}{tr}
\DeclareMathOperator{\div}{div}
\newcommand{\N}{\mathbb{N}}
\newcommand{\R}{\mathbb{R}}
\newcommand{\C}{\mathbb{C}}
\newcommand{\Rm}{\mathrm{Rm}}
\newcommand{\Ric}{\mathrm{Ric}}
\newcommand{\Sc}{\mathrm{scal}}
\newcommand{\dv}{\, dV}
\newcommand{\eps}{\varepsilon}
\newcommand{\Lap}{\Delta}
\newtheorem{definition}[equation]{Definition}
\newtheorem{proposition}[equation]{Proposition}
\newtheorem{theorem}[equation]{Theorem}
\newtheorem{corollary}[equation]{Corollary}
\newtheorem{lemma}[equation]{Lemma}
\newtheorem{remark}[equation]{Remark}
\title{Dynamical Stability and Instability of Poincar\'e--Einstein Manifolds}
\author{Klaus Kr\"oncke and Louis Yudowitz}
\date{\today}
\begin{document}

\maketitle
\begin{abstract}
We prove dynamical stability and instability theorems for Poincar\'{e}-Einstein metrics under the Ricci flow. Our key tool is a variant of the expander entropy for asymptotically hyperbolic manifolds, which Dahl, McCormick and the first author established in a recent article. It allows us to characterize stability and instability in terms of a local positive mass theorem and in terms of volume comparison for nearby metrics.
\end{abstract}
\tableofcontents

\section{Introduction}
    Let $N^n$ be a compact manifold with boundary $\partial N$ and  $M = N \setminus \partial N$ be its interior. Recall that a Riemannian metric $g$ on $M$ is called conformally compact of class $C^{k,\alpha}$, if there is a boundary defining function $\rho$ such that $\rho^2g$ extends to a $C^{k,\alpha}$-Riemannian metric $h$ on $N$. Here, a function $\rho\in C^{\infty}\left(N,\left[0,\infty\right)\right)$ is called boundary defining if $\rho^{-1}\left(0\right) = \partial N$ and $\left.d\rho\right|_p\neq 0$ for all $p\in \partial N$. If $\abs{d\rho}^2_{h} \equiv 1$, we say that $\left(M,g\right)$ is {\em asymptotically hyperbolic}, or simply AH.
\pagebreak

    An Einstein manifold $\left(M^n,g\right)$ which is asymptotically hyperbolic is called Poincar\'{e}-Einstein (PE). Note that in this case, the Ricci curvature necessarily satisfies $\Ric=-\left(n-1\right)g$ because the sectional curvatures converge to $-1$ at infinity. PE metrics form perhaps the most interesting class of AH metrics and play an important role in the AdS-CFT correspondence (see e.g.\ \cite{biquard_AdS}).

    In this paper, we consider the dynamical stability of PE metrics as stationary points of the Ricci flow
    \begin{align}\label{AH_Ricci_flow}
        \partial_tg=-2\Ric_{g}-2\left(n-1\right)g,
    \end{align}
    where the additional constant in the flow equation is the proper normalization in this setting. As a first example, the dynamical stability of hyperbolic space has been established by Schulze--Schn\"{u}rer--Simon \cite{SSS11}, a result which has been refined and generalized to symmetric spaces of noncompact type by Bamler \cite{bamler_symm_spaces}. In \cite{volume_comparison}, Hu--Ji--Shi prove the dynamical stability of a strictly linearly stable PE manifold. Additionally, they conclude a volume comparison result for nearby metrics, for which we use the following definition:
    \begin{definition}
        Let $\left(M,\hat{g}\right)$ be a Poincar\'{e}-Einstein manifold of class $C^{2,\alpha}$. We say that $\hat{g}$ admits the {local volume comparison property} if for all nearby metrics $g$ with $\Sc_g\geq -n\left(n-1\right)$ and $g-\wh{g}$ decaying sufficiently fast, the renormalized volume $RV_{\hat{g}}\left(g\right)"="\mathrm{vol}\left(g\right)-\mathrm{vol}\left(\wh{g}\right)$ is nonnegative.
    \end{definition}

    Here, $\Sc_g$ is the scalar curvature of $g$. Now, Hu--Ji--Shi use their dynamical stability result to show that strictly linearly stable PE manifolds satisfy the local volume comparison property.
    In this paper, we are going to relax the assumptions for dynamical stability.
    Furthermore, we prove a converse of the above implication, i.e.\ we will conclude dynamical stability from the local volume comparison property.

    Our key quantity is the expander entropy $\mu_{AH,\wh{g}}$ for the Ricci flow in the asymptotically hyperbolic setting, which was introduced by Dahl, McCormick and the first author in \cite{poincare_einstein_entropy}. The functional $\mu_{AH,\wh{g}}$ is nondecreasing along the Ricci flow \eqref{AH_Ricci_flow} and constant only along PE metrics.

    The expander entropy is closely related to the volume-renormalized mass $m_{VR,\hat{g}}\left(g\right)$ of a metric $g$ relative to $\hat{g}$,  a new mass-like invariant which Dahl, McCormick and the first author also introduced in \cite{poincare_einstein_entropy}. In general, one can not expect a positive mass theorem for any PE metric as the reference metric $\hat{g}$. But one may have a local positive mass theorem in the following sense:
    \begin{definition}
        Let $\left(M,\wh{g}\right)$ be a Poincar\'{e}-Einstein manifold of class $C^{2,\alpha}$. We say that $\wh{g}$ admits the {local positive mass property} if for all nearby metrics $g$ with $\Sc_g \geq -n\left(n-1\right)$, the volume-renormalized mass $m_{VR,\wh{g}}\left(g\right)$ is nonnegative.
    \end{definition}
     The following theorem, partly proven in \cite[Theorem D]{poincare_einstein_entropy}, relates the local behavior of the entropy and the renormalized volume to a local positive mass theorem for the volume-renormalized mass. The topology on the space of metrics which we use for the theorem below as well as for the two definitions above is the $H^k$-topology for $k > \frac{n}{2} + 2$, where $n$ is the dimension of the manifold.

    \begin{theorem}\label{mainthm_local_positive_mass}
        Let $\left(M,\hat{g}\right)$ be a Poincar\'{e}-Einstein manifold of class $C^{2,\alpha}$. The following are equivalent:
        \begin{itemize}
            \item[(i)] $\wh{g}$ is a local maximizer of $\mu_{AH,\wh{g}}$
            \item[(ii)] $\wh{g}$ satisfies the local positive mass property
            \item[(iii)] $\wh{g}$ satisfies the local volume comparison property
        \end{itemize}
    \end{theorem}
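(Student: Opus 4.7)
The introductory paragraph notes that (i)$\Leftrightarrow$(ii) is already \cite[Theorem D]{poincare_einstein_entropy}, so the task is to insert (iii) into this chain. I would prove (ii)$\Leftrightarrow$(iii) by conformally reducing each side to the constraint surface $\{\Sc = -n(n-1)\}$, on which $m_{VR,\hat g}$ and $RV_{\hat g}$ should agree up to a universal positive constant. I expect this constraint-set identity to follow directly from the definition of the volume-renormalized mass in \cite{poincare_einstein_entropy} by integration by parts, using that $\hat g$ is PE so that the reference scalar curvature terms cancel.

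\emph{Conformal reduction.} For $g$ sufficiently close to $\hat g$ in $H^k$ with $\Sc_g \geq -n(n-1)$ and the required decay, I would solve the Yamabe-type equation
\begin{equation*}
-\frac{4(n-1)}{n-2}\Lap_g u + \Sc_g\, u + n(n-1)\, u^{(n+2)/(n-2)} = 0
\end{equation*}
for a positive $u$ with $u \to 1$ at infinity, setting $\tilde g := u^{4/(n-2)}g$ so that $\Sc_{\tilde g} = -n(n-1)$. The linearization of this equation about $u \equiv 1$ at $g = \hat g$ is the positive elliptic operator $\tfrac{4(n-1)}{n-2}(-\Lap_{\hat g} + n)$, invertible between appropriate weighted Sobolev spaces adapted to the AH geometry; an implicit function theorem argument then produces a unique $u$ close to $1$, with $\norm{u-1}$ controlled linearly by $\norm{\Sc_g + n(n-1)}$. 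A standard maximum principle argument, using $\Sc_g \geq -n(n-1)$, forces $0 < u \leq 1$, hence $\mathrm{vol}(\tilde g) \leq \mathrm{vol}(g)$ and $RV_{\hat g}(\tilde g) \leq RV_{\hat g}(g)$. The analogous conformal transformation law for $m_{VR,\hat g}$ should yield $m_{VR,\hat g}(g) - m_{VR,\hat g}(\tilde g) = \int_M (\Sc_g + n(n-1))\,\Phi(u)\, dV_g$ for some positive density $\Phi$, giving $m_{VR,\hat g}(g) \geq m_{VR,\hat g}(\tilde g)$.

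\emph{Bridging (ii) and (iii).} Assuming (ii), I apply it to $\tilde g$ to obtain $m_{VR,\hat g}(\tilde g) \geq 0$; the constraint-set identity then yields $RV_{\hat g}(\tilde g) \geq 0$, and combining with $RV_{\hat g}(g) \geq RV_{\hat g}(\tilde g)$ gives (iii). Conversely, assuming (iii), I apply it to $\tilde g$ to get $RV_{\hat g}(\tilde g) \geq 0$, hence $m_{VR,\hat g}(\tilde g) \geq 0$ by the identity, and finally $m_{VR,\hat g}(g) \geq m_{VR,\hat g}(\tilde g) \geq 0$ by the conformal monotonicity.

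\textbf{Main obstacle.} The principal difficulty I anticipate is carrying out the conformal reduction rigorously in the specified $H^k$-topology: one must verify invertibility of the conformal Laplacian on the correct weighted Sobolev spaces (checking that the indicial roots of $-\Lap_{\hat g}$ in the AH geometry permit Fredholm index zero with trivial kernel) and ensure that the map $g \mapsto \tilde g$ is continuous enough for $\tilde g$ to remain in the neighborhood where (ii) and (iii) may be invoked. A secondary, more bookkeeping-heavy task is to pin down the precise sign and positive constants in both the constraint-set identity and the conformal monotonicity formula; I expect these to emerge from a careful accounting of boundary terms in the construction of $m_{VR,\hat g}$ in \cite{poincare_einstein_entropy}, using $\Ric_{\hat g} = -(n-1)\hat g$ to absorb them.
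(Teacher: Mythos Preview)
Your overall architecture is right and matches the paper's: (i)$\Leftrightarrow$(ii) is quoted from \cite{poincare_einstein_entropy}, and (ii)$\Leftrightarrow$(iii) is handled via conformal reduction to the constant scalar curvature slice together with the identity $m_{VR,\hat g}=2(n-1)RV_{\hat g}$ there. The direction (ii)$\Rightarrow$(iii) is actually simpler than you make it: for $g$ with the fast decay in the definition of (iii), the ADM boundary term already vanishes, so $m_{VR,\hat g}(g)=2(n-1)RV_{\hat g}(g)$ directly and (ii) gives $RV_{\hat g}(g)\ge 0$ with no conformal step needed.

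The genuine gap is in your (iii)$\Rightarrow$(ii). Property (iii) only applies to metrics with $g-\hat g=O_1(e^{-\delta r})$ for some $\delta>n-1$, whereas the $g$ you start from in (ii) is merely $H^k$-close to $\hat g$, hence $g-\hat g$ decays only like $e^{-(n-1)r/2}$. Your conformal change $\tilde g=u^{4/(n-2)}g$ does \emph{not} cure this: writing $\tilde g-\hat g=(u^{4/(n-2)}-1)g+(g-\hat g)$, the second summand still has only $H^k$ decay, so $\tilde g$ fails the hypothesis of (iii) and you cannot invoke it. Relatedly, your ``constraint-set identity'' $m_{VR}=2(n-1)RV$ only holds once the ADM term vanishes, which again needs the fast decay you do not have. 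The paper repairs this by inserting an approximation step: given $g$ with $\Sc_g=-n(n-1)$ and $m_{VR,\hat g}(g)<0$, approximate it in $H^k$ by metrics $g_i$ with $g_i-\hat g\in C^\infty_c$, then conformally normalize to $\bar g_i$; now $\bar g_i-\hat g$ does decay like $O_1(e^{-\delta r})$ (the compactly supported part is irrelevant at infinity and the conformal factor decays fast by bootstrapping its elliptic equation). Finally one passes the sign of $m_{VR}$ through the limit using that $S_{\hat g}$ is an analytic (hence continuous) functional on $H^k$, obtaining $RV_{\hat g}(\bar g_i)<0$ for large $i$ and contradicting (iii). Your plan is missing precisely this compact-support approximation and the continuity-of-$S_{\hat g}$ step.
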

    A more detailed statement is given in Theorem \ref{thm_local_positive_mass} below. As discussed in \cite[p.\ 5-6]{poincare_einstein_entropy}, this solves an analogue of Ilmanen's conjecture (which Ilmanen formulated for Ricci-flat manifolds which are asymptotically locally Euclidean, or ALE for short) in the asymptotically hyperbolic setting. Now we are ready to state the two main results of this paper.
    \begin{theorem}\label{mainthm_stability} (Dynamical stability)
        Let $\left(M,\hat{g}\right)$ be a Poincar\'{e}-Einstein manifold of class $C^{2,\alpha}$. If $\hat{g}$ is a local maximizer of $\mu_{AH,\hat{g}}$, then
        for every $L^2 \cap L^{\infty}$-neighborhood $\mathcal{U}$ of $\hat{g}$, there exists a $L^2\cap L^{\infty}$-neighborhood $\mathcal{V} \subset \mathcal{U}$
        such that the Ricci flow starting at any metric in  $\mathcal{V}$ exists for all times and converges polynomially
        (modulo diffeomorphisms)  to a Poincar\'{e}-Einstein  metric in $\mathcal{U}$.
    \end{theorem}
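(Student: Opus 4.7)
The plan is to adapt the entropy-based stability scheme of Haslhofer--M\"uller (and its asymptotically hyperbolic refinements in \cite{poincare_einstein_entropy}) to the present setting. First I would switch to a gauge-fixed version of \eqref{AH_Ricci_flow}, namely the Ricci--DeTurck flow with background metric $\hat{g}$, and establish short-time existence and smooth dependence on initial data in weighted H\"older/Sobolev spaces compatible with the $L^2\cap L^{\infty}$ topology in the statement. The structural inputs here are that the DeTurck flow is strictly parabolic, that its linearization at $\hat g$ is (up to sign) the Einstein operator on divergence-free symmetric $2$-tensors plus a zeroth-order perturbation, and that Ricci--DeTurck flow agrees with \eqref{AH_Ricci_flow} up to a time-dependent diffeomorphism generated by a vector field with good decay at the conformal infinity.

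With the flow in hand, I would invoke the monotonicity of $\mu_{AH,\hat g}$ along \eqref{AH_Ricci_flow} (and hence along the DeTurck flow, since the two differ by a pullback and $\mu_{AH,\hat g}$ is invariant under such pullbacks in the appropriate sense). By Theorem~\ref{mainthm_local_positive_mass}, the local-maximizer hypothesis provides a neighborhood $\mathcal W$ of $\hat g$ on which $\mu_{AH,\hat g}(g)\le \mu_{AH,\hat g}(\hat g)$. Consequently, as long as $g(t)$ remains in $\mathcal W$, the deficit $D(t) := \mu_{AH,\hat g}(\hat g)-\mu_{AH,\hat g}(g(t))$ is nonnegative, non-increasing, and its derivative equals the squared $L^2$-norm of the gradient $\nabla\mu_{AH,\hat g}(g(t))$, which by a Perelman-type identity is (up to lower order) $\Ric_{g(t)}+(n-1)g(t)+\nabla^2 f_{g(t)}$ for the associated minimizer $f$.

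The decisive analytic tool will be a {\L}ojasiewicz--Simon inequality of the form
\begin{equation*}
\bigl|\mu_{AH,\hat g}(\hat g)-\mu_{AH,\hat g}(g)\bigr|^{1-\theta} \le C\,\bigl\|\nabla \mu_{AH,\hat g}(g)\bigr\|_{L^2}
\end{equation*}
for $g$ in an $H^k$-neighborhood of $\hat g$, some $\theta\in(0,\tfrac12]$ and $C>0$. Combined with the monotonicity of the previous step, this gives a differential inequality $\dot D \le -c\,D^{2-2\theta}$, which integrates to polynomial decay of $D(t)$; applying the same {\L}ojasiewicz inequality to $\int \|\dot g\|_{L^2}\,dt$ bounds the length of the trajectory polynomially and forces $g(t)$ to converge in $L^2$ to some $g_\infty$. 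Parabolic smoothing promotes this to convergence in stronger norms, and the vanishing of $\nabla\mu_{AH,\hat g}(g_\infty)$ together with the characterization recalled in the introduction shows that $g_\infty$ is Poincar\'e--Einstein; long-time existence and staying in $\mathcal U$ follow from a standard continuation argument once the a priori $L^2\cap L^\infty$ control is in place.

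The hardest part will be establishing the {\L}ojasiewicz--Simon inequality in the AH setting. Producing it requires (a) choosing a weighted Banach scale on which $\mu_{AH,\hat g}$ is real-analytic and its gradient is Fredholm of index zero, while correctly handling the conformal boundary and the fact that the diffeomorphism group acting on PE metrics is not compact; (b) controlling the kernel of the Hessian, built from infinitesimal PE deformations together with Lie-derivative directions, via a transversal slice construction that identifies a finite-dimensional analytic manifold of PE metrics through $\hat g$; and (c) reconciling the $L^2\cap L^\infty$ topology of the statement with the stronger $H^k$ topology used in Theorem~\ref{mainthm_local_positive_mass}, most likely through parabolic regularization that shows the DeTurck flow instantly enters a stronger weighted space. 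The remaining ``modulo diffeomorphisms'' clause is then supplied by the inverse DeTurck diffeomorphism at the limit time.
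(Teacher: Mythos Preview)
Your proposal is correct and follows essentially the same route as the paper: Ricci--DeTurck flow, monotonicity of $\mu_{AH,\hat g}$, a \L ojasiewicz--Simon inequality, the resulting differential inequality for the entropy deficit, integration to bound the trajectory length, and parabolic smoothing (via Bamler's estimates, Proposition~\ref{rdtf_short_time} and Lemma~\ref{rdtf_hl_bounds}) to upgrade $L^2\cap L^\infty$ closeness to $H^k$ closeness and to pass to the limit.

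Two points are worth flagging. First, the paper does not merely pass to the DeTurck gauge but, after time $\tau$, pulls back further by the flow of $-X_{\hat g}(g-\hat g)-\nabla^g f_g$, so that $\partial_t\tilde g$ is exactly $-2(\Ric+(n-1)g+\nabla^2 f)$, i.e.\ twice the $L^2$-gradient of the entropy; this makes the trajectory-length estimate immediate and is slightly more than what you sketched. Second, and more significantly, you anticipate that the \L ojasiewicz--Simon inequality will require a weighted Banach scale and delicate handling of the conformal boundary. In fact the paper's main technical point (Section~\ref{sec_Fredholm} and Remark~\ref{fredholm_remark}) is that, in contrast to the ALE case of Deruelle--Ozuch, the Einstein operator $\Delta_E$ on an AH manifold is already Fredholm between \emph{unweighted} spaces $H^{k+2}\to H^k$ by Lee's theory, so the Lyapunov--Schmidt reduction and the \L ojasiewicz inequality (Theorem~\ref{loj_ineq_thm}) go through on standard Sobolev spaces. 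This is what lets the argument mirror the compact case so closely; your weighted framework would work but is unnecessary here.
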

    \begin{theorem}\label{mainthm_instability} (Dynamical instability)
        Let $\left(M,\hat{g}\right)$ be a Poincar\'{e}-Einstein manifold of class $C^{2,\alpha}$. If $\hat{g}$ is not a local maximizer of $\mu_{AH,\hat{g}}$, then there exists a nontrivial ancient Ricci flow $\left\{g\left(t\right)\right\}_{t\in\left(-\infty,0\right]}$ that converges (modulo diffeomorphisms) to $\hat{g}$ as $t \to -\infty$.
    \end{theorem}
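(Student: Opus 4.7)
\textbf{Proof plan for Theorem \ref{mainthm_instability}.}
The strategy follows the blueprint used for analogous instability results at non-maximizing critical points of monotone geometric functionals (e.g.\ Haslhofer--M\"uller in the ALE Ricci-flat setting, and earlier work on the Perelman entropy): produce the ancient flow as a limit of a sequence of Ricci flows that start arbitrarily close to $\hat g$ but eventually escape a fixed neighborhood, then reparametrize backwards. Since $\hat g$ is not a local maximizer of $\mu_{AH,\hat g}$ in the $H^k$-topology, choose a sequence $g_i \to \hat g$ with $\mu_{AH,\hat g}(g_i) > \mu_{AH,\hat g}(\hat g)$. Let $g_i(t)$ denote the solution of \eqref{AH_Ricci_flow} starting from $g_i$, constructed via the DeTurck trick (with $\hat g$ as background) so that the resulting strictly parabolic system is well-posed on the appropriate weighted AH function spaces. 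Monotonicity of $\mu_{AH,\hat g}$ along the flow, together with its strict increase off the PE locus, prevents $g_i(t)$ from converging back to $\hat g$; hence $g_i(t)$ must leave a prescribed $L^2 \cap L^\infty$-neighborhood $U$ of $\hat g$ at some first time $T_i < \infty$.

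Next I would show $T_i \to \infty$: by continuous dependence on initial data for the DeTurck--Ricci flow in the AH setting (which in turn relies on the linear theory already developed in \cite{poincare_einstein_entropy}), any Ricci flow starting sufficiently close to $\hat g$ must remain near $\hat g$ for a prescribed time. Reparametrize by setting $\tilde g_i(t) := g_i(t+T_i)$ on $t \in [-T_i,0]$, so $\tilde g_i(0) \in \partial U$ while $\tilde g_i(-T_i) = g_i \to \hat g$. Using parabolic regularization (Shi-type estimates adapted to the AH setting) and the uniform $C^0$-control provided by confinement in $U$, extract a subsequence converging in $C^\infty_{\mathrm{loc}}$ on $(-\infty,0] \times M$ to an ancient Ricci flow $g_\infty(t)$; the configuration $g_\infty(0) \in \overline U$ sitting on $\partial U$ rules out the trivial solution.

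Finally, I would analyze the behavior as $t \to -\infty$. Along the monotone quantity $\mu_{AH,\hat g}(\tilde g_i(t))$ is bounded between $\mu_{AH,\hat g}(g_i)$ (which tends to $\mu_{AH,\hat g}(\hat g)$) and its supremum over $U$, hence in the limit $\mu_{AH,\hat g}(g_\infty(t)) \searrow \mu_{AH,\hat g}(\hat g)$ as $t \to -\infty$. Combined with a \L{}ojasiewicz--Simon inequality for $\mu_{AH,\hat g}$ centered at $\hat g$ (of the form $|\mu_{AH,\hat g}(g) - \mu_{AH,\hat g}(\hat g)|^{1-\theta} \leq C\,\|\mathrm{grad}\,\mu_{AH,\hat g}(g)\|$ in the relevant weighted norm), this forces $g_\infty(t) \to \hat g$ modulo diffeomorphisms as $t \to -\infty$, essentially via the Huisken--Simon trajectory argument. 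The principal technical obstacle is to port these analytic tools — short-time well-posedness, Shi estimates, and most crucially the \L{}ojasiewicz--Simon inequality — into the AH setting with its weighted Sobolev / $L^2 \cap L^\infty$ framework; once this is done (and much of it is already in \cite{poincare_einstein_entropy} and its applications to Theorem \ref{mainthm_stability}), the escape-and-reparametrize construction above runs exactly as in the compact case.
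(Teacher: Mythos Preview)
Your overall strategy---escape-and-reparametrize, then extract an ancient limit via compactness, then establish backward convergence with the \L ojasiewicz--Simon inequality---is exactly the paper's approach. There is, however, a genuine gap in your escape step. Monotonicity of $\mu_{AH,\hat g}$ together with strict increase off the PE locus does \emph{not} imply that $g_i(t)$ leaves $U$ in finite time: nothing prevents the flow from staying in $U$ forever and converging to another PE metric in $\overline U$ with strictly larger entropy (such metrics may well exist precisely because $\hat g$ is not a local maximizer), or from wandering in $U$ with entropy increasing to a finite limit. The paper (following Haslhofer--M\"uller) uses the \L ojasiewicz--Simon inequality already at this stage: writing $F(t)=\mu_{AH,\hat g}(g_i(t))-\mu_{AH,\hat g}(\hat g)>0$, one combines $F'\gtrsim\|\nabla\mu_{AH,\hat g}\|^2$ with $\|\nabla\mu_{AH,\hat g}\|^2\geq c\,F^{2-\theta}$ to obtain $\tfrac{d}{dt}F^{\theta-1}\leq -c'$, so $F^{\theta-1}$ reaches zero (i.e.\ $F\to\infty$) in finite time unless the flow exits the region where the inequality holds. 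Since $F$ is bounded on the fixed $H^k$-neighborhood, this forces the finite escape time $T_i$. You should invoke the \L ojasiewicz--Simon inequality here, not only for the backward limit.

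A smaller imprecision: your sandwich argument for $\mu_{AH,\hat g}(g_\infty(t))\searrow\mu_{AH,\hat g}(\hat g)$ only gives the lower bound $\mu_{AH,\hat g}(g_\infty(t))\geq\mu_{AH,\hat g}(\hat g)$; and even granting entropy convergence, applying the \L ojasiewicz--Simon inequality to the limit flow alone yields convergence to \emph{some} nearby PE metric, not necessarily $\hat g$. The paper handles both issues at once by proving a \emph{uniform} (in $i$) trajectory-length bound $\|\tilde g_i(-T_i)-\tilde g_i(t)\|_{H^k}\leq C(-t+C')^{-\sigma/(1-\theta)}$ on the approximants via the \L ojasiewicz--Simon inequality, and then passing to the limit with the triangle inequality together with $\tilde g_i(-T_i)=g_i\to\hat g$. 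This pins down the backward limit as $\hat g$ with an explicit polynomial rate.
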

    Observe also that the converse implications also hold due to the monotonicity of $\mu_{AH,\hat{g}}$ along the Ricci flow. In particular, each PE manifold is either dynamically stable or dynamically unstable and this entirely depends on the local behavior of $\mu_{AH,\hat{g}}$.

    Summarizing and combining these results and the discussion, we establish the following equivalences for Poincar\'{e}-Einstein manifolds:
    \begin{align*}
        \text{dynamical stability}\Leftrightarrow \text{positive mass theorem for nearby metrics}\\
        \text{dynamical instability}\Leftrightarrow \text{failure of positive mass  for nearby metrics}
    \end{align*}
    The statements and proofs of Theorem \ref{mainthm_stability} and \ref{mainthm_instability} are similar to corresponding results in the compact case, see \cite{HM14} by Haslhofer-M\"uller for Ricci-flat manifolds and \cite{kroncke_Ricci_solitons,kroncke_Einstein} by the first author for Einstein metrics and Ricci solitons. An essential ingredient is a \L ojasziewicz-Simon inequality for  $\mu_{AH,\hat{g}}$ which is as follows:

    \begin{theorem}\label{mainthm_LS_ineq}  (\L ojasiewicz-Simon inequality)
        Let $\left(M,\wh{g}\right)$ be a Poincar\'e--Einstein manifold  of class $C^{2,\alpha}$. Then there exists a positive constant $C > 0$ and an exponent $\theta \in \left(0,1\right]$ such that we have
        \begin{equation}
            \abs{\mu_{\mathrm{AH},\wh{g}}\left(g\right) - \mu_{\mathrm{AH},\wh{g}}\left(\wh{g}\right)}^{2-\theta} \leq C\norm{\nabla\mu_{\mathrm{AH},\wh{g}}\left(g\right)}^2_{L^2\left(M\right)}
        \end{equation}
        for all metrics $g$ sufficiently close to $\hat{g}$.
    \end{theorem}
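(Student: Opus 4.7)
The plan is to deduce the inequality from an abstract \L ojasiewicz--Simon theorem in the spirit of Simon and Chill, in the same way it was done in the compact Ricci-flat/Einstein settings in \cite{HM14, kroncke_Einstein}. In order to apply such an abstract result to $\mu_{\mathrm{AH},\wh{g}}$, three ingredients are needed: (a) analyticity of $\mu_{\mathrm{AH},\wh{g}}$ as a map between suitable Banach manifolds of metrics near $\wh{g}$; (b) the gradient $\nabla\mu_{\mathrm{AH},\wh{g}}$ has a Fredholm linearization at $\wh{g}$ with closed range; and (c) a gauge slice is available to factor out the natural diffeomorphism invariance of the functional.

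First, I would write $\mu_{\mathrm{AH},\wh{g}}(g)$ as the value of the auxiliary variational problem from \cite{poincare_einstein_entropy}, whose Euler--Lagrange equation selects a unique weight function $f_g$ in the appropriate weighted Sobolev/H\"older space near $\wh{g}$. Since the operator defining $f_g$ is elliptic and invertible on AH-weighted function spaces, the implicit function theorem provides analytic dependence $g \mapsto f_g$, and hence real-analytic dependence of $\mu_{\mathrm{AH},\wh{g}}$ on $g$ in the $H^k$-topology with $k > \tfrac{n}{2}+2$ (the regularity threshold already used in Theorem \ref{mainthm_local_positive_mass}). Computing the first variation, the gradient equals the (weighted) traceless Ricci-like tensor associated to $(g,f_g)$, and in particular vanishes at the PE metric $\wh{g}$.

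Next, I would compute the second variation at $\wh{g}$ and identify the linearization of the gradient as an operator of the form $\tfrac12 L_{\wh{g}} + (\text{gauge terms})$, where $L_{\wh{g}}$ is a Lichnerowicz-type operator acting on symmetric $2$-tensors, elliptic and formally self-adjoint in the correct weighted $L^2$-space determined by $\wh{g}$. The non-compactness is handled by working in weighted spaces adapted to the AH geometry; in such spaces $L_{\wh{g}}$ is Fredholm with closed range by the standard scattering/b-calculus elliptic theory on conformally compact manifolds. To cope with the diffeomorphism invariance of $\mu_{\mathrm{AH},\wh{g}}$, I would then install an Ebin-type slice (or DeTurck gauge), restricting to divergence-free perturbations modulo the kernel at $\wh{g}$; this yields a Fredholm, self-adjoint linearization with finite-dimensional kernel consisting of the infinitesimal Einstein deformations transverse to the gauge orbit.

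With (a), (b), (c) in place, the abstract \L ojasiewicz--Simon theorem from \cite{HM14, kroncke_Einstein} (in its version suited to functionals on non-reflexive Banach spaces of metrics) applies verbatim: analyticity plus a Fredholm critical point yield the desired inequality with some exponent $\theta\in(0,1]$. The main obstacle I anticipate is the Fredholm and regularity theory on the AH background: one must verify that the linearization of the $f_g$-equation is an isomorphism on the correct weighted spaces, that the resulting gradient map is smooth (in fact analytic) between compatible H\"ilbert manifolds of $H^k$-metrics, and that the DeTurck gauge can be implemented without losing either the analytic dependence on $g$ or the decay of perturbations required so that $\mu_{\mathrm{AH},\wh{g}}(g)$ is even well-defined. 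Once this functional-analytic package is set up, the \L ojasiewicz--Simon inequality follows by a direct transcription of the compact-case argument.
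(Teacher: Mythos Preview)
Your plan is essentially correct and matches the paper's strategy: establish analyticity of $\mu_{\mathrm{AH},\wh{g}}$ via the implicit function theorem applied to the Euler--Lagrange equation for $f_g$, identify the linearization of the gradient at $\wh{g}$ with (half of) the Einstein operator $\Delta_E$, install an Ebin--Palais slice to kill the gauge directions, and then apply an abstract \L ojasiewicz--Simon theorem obtained by Lyapunov--Schmidt reduction onto the finite-dimensional $L^2$-kernel. The paper follows the Colding--Minicozzi presentation of this scheme rather than the Simon/Chill one you cite, but this is cosmetic.

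One point where the paper is simpler than your plan, however, deserves comment. You anticipate that the ``main obstacle'' is setting up Fredholm and regularity theory in weighted spaces adapted to the AH geometry, invoking scattering/b-calculus. In fact this is unnecessary here and is precisely what distinguishes the AH case from the ALE case of Deruelle--Ozuch. By Lee's theory, the Einstein operator on a PE manifold has indicial radius $\tfrac{n-1}{2}$, so $\Delta_E : H^{k+2}\to H^k$ is Fredholm of index zero in \emph{unweighted} Sobolev spaces for every $k$; likewise $\Delta + (n-1)$ on functions is an unweighted isomorphism, which is exactly what is needed to run the implicit function theorem for $f_g$. Consequently the entire argument, including the abstract \L ojasiewicz--Simon theorem and its verification, can be carried out in ordinary $H^k$ and $L^2$ spaces, with the $L^2$-norm in the final inequality being the genuine unweighted one. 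So the obstacle you flag does not arise, and you can drop the weighted-space machinery from your outline.
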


    The reason that these results and proofs are similar to the compact case lies in the behavior of the linearization of $\nabla\mu_{\mathrm{AH},\wh{g}}$. Both in the compact case and in the asymptotically hyperbolic case, it is (up to gauge) Fredholm as a map between (unweighted) Sobolev spaces and positive on the orthogonal complement of the kernel if it is nonnegative.

    In the case of ALE manifolds, the Fredholm property only holds between spaces of specific weights. Using this, Deruelle-Ozuch \cite[Theorem 0.3]{ale_loj} establish a \L ojasziewicz-Simon inequality for a suitably weighted $L^2$-norm which only under additional assumptions extends to an inequality of the standard $L^2$-norm.

    The present paper is structured as follows. In Section \ref{sec_Fredholm}, we establish some Fredholm and isomorphism properties for Laplace-type operators on PE manifolds. In Section \ref{sec_entropy}, we briefly review the expander entropy and some of its properties. In the Sections \ref{sec_pmt} and \ref{loj_section}, we prove Theorem \ref{mainthm_local_positive_mass} and Theorem \ref{mainthm_LS_ineq}, respectively. Finally, we prove Theorems \ref{mainthm_stability} and \ref{mainthm_instability} in Section \ref{stability_thms}.
\section{Fredholm Operators on Poincar\'e--Einstein Manifolds}\label{sec_Fredholm}
    In this section, we are going to show that for PE manifolds, some Fredholm and isomorphism properties for elliptic operators are independent from the boundary regularity. The following lemma is a consequence of results in \cite{lee_book}. Our sign convention for the rough Laplacian on tensor fields is $\Delta=\nabla^*\nabla=-\mathrm{tr}\nabla^2$.
    \begin{lemma}\label{low_reg_isomorphism_result}
        Let $\left(M,\wh{g}\right)$ be a conformally compact manifold of class $C^{2,\alpha}$ and let $E \rightarrow M$ be an $\left(r_1,r_2\right)$-tensor bundle over $M$. Then there exists a constant $\lambda_1 = \lambda_1\left(n,r_1,r_2\right)$ such that for all $c > \lambda_1$, the operator $\Lap + c: C^{2,\alpha}\left(M;E\right) \rightarrow C^{0,\alpha}\left(M;E\right)$ acting on $\left(r_1,r_2\right)$-tensors is an isomorphism. Furthermore, there exists a constant $\lambda_2 = \lambda_2\left(n,r_1,r_2\right)$ such that for all $c > \lambda_2$, the operator $\Lap + c: H^{2}\left(M;E\right) \rightarrow L^{2}\left(M;E\right)$ acting on $\left(r_1,r_2\right)$-tensors is an isomorphism.         
    \end{lemma}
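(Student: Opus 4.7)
The plan is to reduce the statement to the case of smooth conformally compact metrics, where it is Lee's Fredholm theorem from \cite{lee_book}, and then to extend to the $C^{2,\alpha}$ setting via approximation. For a smooth conformally compact metric, Lee provides explicit thresholds $\lambda_1, \lambda_2$, depending only on the dimension $n$ and tensor type $(r_1, r_2)$, such that for $c$ above these thresholds the operator $\Delta + c$ is Fredholm of index zero between the relevant unweighted H\"older or $L^2$-Sobolev spaces; these thresholds are governed by the indicial roots of $\Delta + c$ at the boundary, which depend only on the model operator and hence only on $n$ and the bundle. The identity $\langle (\Delta + c) u, u \rangle_{L^2} = \|\nabla u\|_{L^2}^2 + c\|u\|_{L^2}^2$ together with self-adjointness upgrade the Fredholm property to invertibility once $c$ is large enough.

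For the $C^{2,\alpha}$ case, I would approximate $\hat g$ by smooth conformally compact metrics $\hat g_k \to \hat g$ in $C^{2,\alpha'}$ for some $\alpha' < \alpha$, for instance by mollifying the compactified metric $\rho^2 \hat g$ on $N$ and dividing by $\rho^2$. For each $\hat g_k$, Lee's results yield invertibility of $\Delta_{\hat g_k} + c$ for $c$ above the same thresholds $\lambda_1(n, r_1, r_2)$ and $\lambda_2(n, r_1, r_2)$, with inverses bounded uniformly in $k$. Since the coefficients of $\Delta_{\hat g_k}$ involve at most two derivatives of $\hat g_k$, they converge to those of $\Delta_{\hat g}$ in $C^{0,\alpha'}$, so $\Delta_{\hat g_k} + c \to \Delta_{\hat g} + c$ in the operator norms $C^{2,\alpha'} \to C^{0,\alpha'}$ and $H^2 \to L^2$. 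The standard perturbation argument for bounded invertibility then transfers invertibility to the limit.

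The main obstacle I anticipate is establishing a \emph{uniform} bound on the inverses $(\Delta_{\hat g_k} + c)^{-1}$ across the approximating family. This requires uniform local Schauder and $L^2$-regularity estimates in the M\"obius chart covers used by Lee, which rely on a uniform $C^{2,\alpha'}$-bound for the approximating metrics on the compactification $N$ and a fixed boundary defining function $\rho$. With this uniform control, the passage to the limit is a routine functional-analytic step, producing the desired isomorphisms for $\hat g$ itself with thresholds depending only on $n$, $r_1$, and $r_2$.
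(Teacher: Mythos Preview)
Your approximation strategy is plausible but unnecessary, and it differs substantially from the paper's argument. Lee's Fredholm theory in \cite{lee_book} is developed for conformally compact metrics of finite regularity (class $C^{l,\beta}$), not only smooth ones; in particular Theorem~C there applies directly to a $C^{2,\alpha}$ conformally compact manifold. The paper therefore bypasses any approximation and instead does the only nontrivial work that remains: it computes the indicial map $I_s(\Delta+c)$ explicitly (using \cite[Proposition~7.3]{lee_book}) and shows that for each $R_0>0$ there is a threshold $\mu(n,r_1,r_2,R_0)$ such that $c>\mu$ forces the indicial radius of $\Delta+c$ to exceed $R_0$. Choosing $R_0=\tfrac{n-1}{2}$ gives $\lambda_1$ (so that the unweighted $C^{2,\alpha}\to C^{0,\alpha}$ Fredholm statement from Theorem~C applies), and $R_0>0$ gives $\lambda_2$ (for $H^2\to L^2$). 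Triviality of the $L^2$-kernel then follows from positivity of $c$, exactly as you note.

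By contrast, your route requires the uniform inverse bound across the approximating family, which you correctly flag as the main obstacle; establishing it rigorously essentially reproduces the Schauder/Sobolev machinery that Lee already set up in the finite-regularity setting. A further wrinkle: your approximation converges only in $C^{2,\alpha'}$ for $\alpha'<\alpha$, so the limiting invertibility you obtain is a priori between $C^{2,\alpha'}$ and $C^{0,\alpha'}$, and you would still need a separate regularity step to recover the stated $C^{2,\alpha}\to C^{0,\alpha}$ isomorphism. None of this is fatal, but it is extra work that the direct application of \cite[Theorem~C]{lee_book} avoids entirely.
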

    \begin{proof}
        We are first going to prove that for each $\left(r_1,r_2\right)$-tensor bundle $E$ and each $R_0>0$, there exists a $\mu=\mu\left(n,r_1,r_2,R_0\right)$ such that, for all $c>\mu$, the indicial radius $R$ of the operator $\Delta + c$ satisfies $R>R_0$.
        For convenience for the reader, we recall the definition of an indicial radius (c.f\ also \cite{lee_book}).

        For each $s \in \C$, a uniformly degenerate operator $P$ induces a bundle endomorphism $I_s\left(P\right)$ on the boundary, given by $I_s(P)(\overline{u})=\rho^{-s} P(\rho^su)|_{\partial M}$, where $u\in C^{\infty}(M,E)$ is an arbitrary extension of $\overline{u}\in C^{\infty}(\partial M,E|_{\partial M})$. Now $s$ is called a characteristic exponent of $P$ at $p\in \partial M$, if $I_s(P)|_{p}:E_p\to E_p$ is singular. By \cite[Lemma 4.3]{lee_book}, the characteristic exponents are constant on $\partial M$ as long as $\partial M$ is connected.
         
        If $E$ is an $(r_1,r_2)$ tensor bundle, we call $r=r_1-r_2$ the weight of the tensor bundle. Now, the indicial radius $R$ is the smallest nonnegative number such that $P$ has a characteristic exponent $s$ with $\mathrm{Re}(s)=\frac{n-1}{2}+R-r$. By \cite[Proposition 7.3]{lee_book}, we get 
        \begin{equation*}
            I_s(\Delta+c)=I_0(\Delta+c)+s(n-1-s-2r)=I_0(\Delta)+c+s(n-1-s-2r).
       \end{equation*} 
      Thus a complex number $s$ of the form $s=\frac{n-1}{2}+R-r$ is a characteristic exponent of $\Delta+c$ if and only if the operator
        \begin{equation*}
            A:=I_0(\Delta)+c+\left(\frac{n-1}{2}-r\right)^2-R^2
        \end{equation*}
        is singular. Obviously, this can only be the case if $R^2$ is a real number. Let $i_0$ be the lower bound for $I_0(\Delta)$ and $\overline{u}\in \mathrm{ker}(A)$. Then, 
        \begin{equation}\label{singular_operator}
            0=\langle A\overline{u},\overline{u}\rangle \geq i_0|\overline{u}|^2+c|\overline{u}|^2+\left(\frac{n-1}{2}-r\right)^2|\overline{u}|^2 - R^2|\overline{u}|^2.
        \end{equation}
        Now for given $R_0>0$, choose $\mu\in\R$ such that
        \begin{equation*}
            i_0+\mu +\left(\frac{n-1}{2}-r\right)^2=(R_0)^2.
        \end{equation*}
        Then for all $c>\mu$ and $R\in \C$ with $R^2\in\R$ and $R^2\leq (R_0)^2$ (hence all $R$ some are either purely imaginary or real with $|R|\leq R_0$), the right hand side of \eqref{singular_operator} is positive as long as $\overline{u}$ is nontrivial. This implies that the operator $A$ is nonsingular for all such values of $c$ and $R$. Thus, the indicial radius of $\Delta+c$ is larger than $R_0$, proving the claim.

        Choose $\lambda_1>0$ such that $\Delta+c$ has positive indicial radius  larger than $\frac{n-1}{2}$  for $c>\lambda_1$ and $\lambda_2>0$ such that $\Delta+c$ has positive indicial radius for $c>\lambda_2$.
        Then by \cite[Theorem C]{lee_book}, the operator $\Delta+c:C^{2,\alpha}\to C^{0,\alpha}$ is Fredholm of index zero if $c>\lambda_1$ and $\Delta+c:H^2\to L^2$ is Fredholm of index zero if $c>\lambda_2$. Furthermore, in both cases, the kernel is equal to the $L^2$ kernel, which is trivial because $\lambda_i>0$ for $i=1,2$. Thus the operators are isomorphisms, as desired.
    \end{proof}
             The next lemma shows that in the case of PE manifolds, the isomorphism property extends to function spaces of higher regularity, independently of the regularity of the conformal compactification.
    \begin{lemma}\label{isomorphism_result}
        Let $\left(M,\wh{g}\right)$ be a PE manifold of class $C^{2,\alpha}$. Let $E \rightarrow M$ be an $\left(r_1,r_2\right)$-tensor bundle over $M$ and $\lambda_i$ be as in Lemma \ref{low_reg_isomorphism_result}.
        
        Then for all $c > \lambda_1$ and all $k\in\N_0$, the operator $\Lap + c: C^{k+2,\alpha}\left(M;E\right) \rightarrow C^{k,\alpha}\left(M;E\right)$ acting on $\left(r_1,r_2\right)$-tensors is an isomorphism. Similarly, for all $c>\lambda_2$ and all $k \in \N_0$, the operator $\Lap + c : H^{k+2}\left(M;E\right) \rightarrow H^k\left(M;E\right)$ is an isomorphism.
    \end{lemma}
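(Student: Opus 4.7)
The plan is to argue by induction on $k$, taking the case $k=0$ from Lemma~\ref{low_reg_isomorphism_result} as the base. The new ingredient needed to pass to arbitrary $k\in\N_0$ is that the PE hypothesis forces $\wh{g}$ to be much more regular than $C^{2,\alpha}$. Specifically, by the boundary regularity theorem for conformally compact Einstein metrics (Chru\'{s}ciel--Delay--Lee--Skinner), the metric $\wh{g}$ admits a polyhomogeneous asymptotic expansion at $\partial N$ and is in particular $C^{k+2,\alpha}$ conformally compact for every $k$; in the interior of $M$, $\wh{g}$ is smooth in harmonic coordinates by standard elliptic regularity for the Einstein equation. Consequently, the coefficients of $\Delta$ in the intrinsic sense required by \cite{lee_book} possess arbitrary regularity.

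With this upgraded regularity of $\wh{g}$, I would apply \cite[Theorem C]{lee_book} directly at the higher-regularity levels $C^{k+2,\alpha} \to C^{k,\alpha}$ and $H^{k+2} \to H^k$. The key point is that the indicial radius is a purely boundary quantity determined by the indicial operator $I_0(\Delta + c)$, and hence is unchanged when we vary the Banach space setting. Thus the same thresholds $\lambda_1$ and $\lambda_2$ from Lemma~\ref{low_reg_isomorphism_result} make $\Delta + c$ Fredholm of index zero at each level. Injectivity at higher regularity is automatic, since any kernel element lies in $C^{2,\alpha}$ (resp.\ $H^2$) and therefore vanishes by Lemma~\ref{low_reg_isomorphism_result}; combined with index zero, this yields the desired isomorphism.

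The main difficulty I anticipate is justifying the upgraded regularity of $\wh{g}$ rigorously while respecting the hypotheses the authors wish to maintain. If invoking the Chru\'{s}ciel--Delay--Lee--Skinner polyhomogeneity result is undesirable, an alternative is a direct bootstrap: differentiate the equation $(\Delta + c)u = f$ and use the commutator formula $[\nabla, \Delta]u = \Rm \ast u + \nabla \Rm \ast u$ (whose coefficients involve only curvature), then inductively upgrade both $\wh{g}$ and $u$ using the Einstein equation in harmonic coordinates for the interior, together with weighted Schauder estimates on hyperbolic unit balls near the boundary. This provides a self-contained argument at the cost of more technical bookkeeping, but still relies on the interior smoothness of Einstein metrics as the essential extra input beyond Lemma~\ref{low_reg_isomorphism_result}.
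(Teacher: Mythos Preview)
Your primary route---upgrading the conformal compactification via Chru\'sciel--Delay--Lee--Skinner and then re-invoking \cite[Theorem~C]{lee_book} at each higher level---has a genuine gap. Polyhomogeneity does \emph{not} imply that $\rho^2\wh g$ extends to a $C^{k+2,\alpha}$ metric on $N$ for every $k$: when the conformal boundary has even dimension the Fefferman--Graham expansion generically contains logarithmic terms, so the compactification is only finitely differentiable at $\partial N$. Since the hypotheses of \cite[Theorem~C]{lee_book} at the $C^{k+2,\alpha}\to C^{k,\alpha}$ level require a correspondingly regular compactification, you cannot apply Lee's theorem directly beyond the order at which the logs first appear. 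More to the point, the sentence preceding the lemma in the paper states explicitly that the goal is to obtain the isomorphism \emph{independently of the regularity of the conformal compactification}; your primary route runs counter to that aim and imports a much heavier external result.

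Your alternative bootstrap is the right idea and is essentially what the paper does, but the paper executes it more cleanly and fully intrinsically, without harmonic coordinates or a separate interior/boundary analysis. The key observation is that on an Einstein manifold the curvature tensor itself satisfies the elliptic equation $\Delta\Rm=\Rm+\Rm*\Rm$; applying Lemma~\ref{low_reg_isomorphism_result} to the curvature-tensor bundle immediately upgrades $\|\Rm\|_{C^{0,\alpha}}<\infty$ to $\|\Rm\|_{C^{2,\alpha}}<\infty$. One then differentiates $(\Delta+c)u=\varphi$ covariantly $k$ times to obtain
\[
(\Delta+c)\nabla^k u=\nabla^k\varphi+\sum_{i=0}^{k}\nabla^{k-i}\Rm*\nabla^i u,
\]
and applies the base-case isomorphism on the bundle of $(k+r_1,r_2)$-tensors (with a possibly different constant $c_k$) to bound $\|\nabla^k u\|_{C^{2,\alpha}}$. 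The regularity of $\Rm$ and of $u$ are bootstrapped \emph{simultaneously} in the induction, entirely within the intrinsic H\"older (resp.\ Sobolev) framework, so no coordinate choice and no appeal to boundary-regularity theorems is needed.
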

    %
    \begin{proof}    To ease notation slightly, We will omit in this lemma the dependence on the tensor bundle $E$. 
        By Lemma \ref{low_reg_isomorphism_result},
        \begin{align*}
            \Lap + c : C^{2,\alpha} &\rightarrow C^{0,\alpha}
        \end{align*}
        is an isomorphism for $c>\lambda_1$. We fix the constant $c$ throughout the proof.
        We want to show that the restriction
        \begin{equation*}
            \Lap + c : C^{k+2,\alpha}\rightarrow C^{k,\alpha}
        \end{equation*}
        is an isomorphism for all $k\in\N$. This map is obviously injective as the restriction of an injective map. Thus it remains to show surjectivity. Consider $\varphi\in C^{0,\alpha}$. We know already that there exists $u\in C^{2,\alpha}$ such that
        \begin{equation}\label{surj_pde}
            \Lap u + cu = \varphi.
        \end{equation}
        We are going to show by induction on $k$ that if $\varphi\in C^{k,\alpha}$, then $u\in C^{k+2,\alpha}$ and 
        \begin{equation}\label{ind_estimate}
            \norm{u}_{C^{k+2,\alpha}}\leq  C\norm{\varphi}_{C^{k,\alpha}},
        \end{equation} 
        thereby establishing the desired surjectivity. We know that this inequality holds for $k=0$. Note that, because the manifold is $C^{2,\alpha}$-conformally compact, $\norm{\Rm}_{C^{0,\alpha}}\leq C<\infty$. In order to improve the regularity of $u$, we will also have to improve the regularity of $\Rm$. For this, note that the Riemann curvature tensor $\Rm$ of $\wh{g}$ satisfies the following elliptic PDE when $\left(M,\wh{g}\right)$ is an Einstein manifold:
        \begin{equation}\label{rm_evo}
            \Lap \Rm = \Rm + \Rm \ast \Rm.
        \end{equation}
                By Lemma \ref{low_reg_isomorphism_result}, this immediately gives us $\norm{\Rm}_{C^{2,\alpha}}\leq C$.
                
  Now we are ready to do the induction step.
        Assume that
        \begin{equation}
            \norm{u}_{C^{k+1,\alpha}}\leq  C\norm{\varphi}_{C^{k-1,\alpha}}, \qquad \norm{\Rm}_{C^{k+1,\alpha}}\leq C.
        \end{equation}
        We differentiate \eqref{surj_pde} $k$ - times covariantly and commute derivatives to get
        \begin{equation}\label{differentiated_identity}
            \nabla^k \varphi = \nabla^k\left(\Lap + c\right)u = \left(\Lap + c\right)\nabla^k u + \sum^k_{i=0}\nabla^{k-i} \Rm \ast \nabla^i u.
        \end{equation}
        Since we know $\Lap + c_k : C^{2,\alpha} \rightarrow C^{0,\alpha}$ is an isomorphism (for a $c_k$ potentially different from $c$, because the operator is acting on a different tensor bundle), we have the following:
        \begin{align}
            \norm{\nabla^k u}_{C^{2,\alpha}} &\leq C\norm{\left(\Lap + c_k\right)\nabla^k u}_{C^{0,\alpha}} \nonumber \\
            &\leq C\norm{\left(\Lap + c\right)\nabla^k u}_{C^{0,\alpha}} C|c-c_k|\norm{\nabla^k u}_{C^{0,\alpha}} \nonumber \\
            &\leq C'' \left(\norm{\nabla^k\varphi}_{C^{0,\alpha}} + (1+\norm{\Rm}_{C^{k,\alpha}\left(M\right)})\norm{u}_{C^{k,\alpha}}\right) \label{isomorphism_induction}\\
            &\leq \left(C''' + \norm{\Rm}_{C^{k,\alpha}}\right)\norm{\varphi}_{C^{k,\alpha}}, \nonumber
        \end{align}
        where the final inequality follows from the induction hypothesis. This yields the desired bound \eqref{ind_estimate} for $u$. Applying the above procedure furthermore to the equation \eqref{rm_evo} and applying the induction assumptions on the curvature yields the bound $\norm{\Rm}_{C^{k+2,\alpha}}\leq C$ and thereby finishes the induction step. The argument for the $H^k$ spaces is completely analogous, with the added benefit that we do not need to improve the regularity of the curvature tensor again.
    \end{proof}

    Combining with \cite[Proposition E]{lee_book}, this yields

    \begin{corollary}\label{cor_Lap_iso}
        Let $\left(M,\wh{g}\right)$ be a PE manifold of class $C^{2,\alpha}$. Then for all $c > 0$ and all $k \in \N_0$, the operator $\Lap + c$ acting on functions is an isomorphism both as a map  $\Lap + c : H^{k+2}\left(M\right) \rightarrow H^k\left(M\right)$ and  $\Lap + c : C^{k+2,\alpha}\left(M\right) \rightarrow  C^{k,\alpha}\left(M\right)$.
    \end{corollary}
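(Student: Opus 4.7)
The plan is to treat the case $k = 0$ as a separate base step using \cite[Proposition E]{lee_book}, and then bootstrap exactly as in the proof of Lemma \ref{isomorphism_result}. The reason a separate base step is needed is that Lemma \ref{isomorphism_result} only yields the isomorphism property for shifts $c > \lambda_1$ (or $c > \lambda_2$), whereas the corollary claims every $c > 0$ works. For scalar functions on a PE manifold, however, the indicial roots of $\Lap$ are $0$ and $n-1$, so the indicial radius is maximal, and Lee's Proposition E pins down an explicit range of admissible shifts that includes all $c > 0$. Combined with the elementary $L^2$-identity
\begin{equation*}
    \int_M u(\Lap u + cu)\dv = \int_M \left(\abs{\nabla u}^2 + c\, u^2\right)\dv,
\end{equation*}
which kills the kernel whenever $c > 0$, this gives that $\Lap + c : C^{2,\alpha}(M)\to C^{0,\alpha}(M)$ and $\Lap + c : H^{2}(M) \to L^{2}(M)$ are both isomorphisms for every $c > 0$.

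For the induction step I would copy the argument in the proof of Lemma \ref{isomorphism_result} verbatim, adapted to the scalar setting. Given $\varphi\in C^{k,\alpha}(M)$ (resp.\ $H^k(M)$), let $u$ be the $C^{2,\alpha}$-solution (resp.\ $H^2$-solution) of $(\Lap+c)u=\varphi$ provided by the base case. Differentiating $k$ times and commuting covariant derivatives yields the identity
\begin{equation*}
    \nabla^k \varphi = (\Lap + c)\nabla^k u + \sum_{i=0}^{k}\nabla^{k-i}\Rm \ast \nabla^i u,
\end{equation*}
now interpreted as an equation on the $(0,k)$-tensor bundle $E=(T^*M)^{\otimes k}$. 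Applying Lemma \ref{isomorphism_result} to the operator $\Lap + c_k$ on $E$ for some fixed $c_k$ chosen above $\lambda_1$ (resp.\ $\lambda_2$), writing $(\Lap + c_k)\nabla^k u = (\Lap + c)\nabla^k u + (c_k - c)\nabla^k u$, and then invoking the inductive hypothesis on $\norm{u}_{C^{k,\alpha}}$ together with the simultaneously improved curvature bound $\norm{\Rm}_{C^{k,\alpha}}\le C$ coming from the elliptic identity $\Lap \Rm = \Rm + \Rm\ast\Rm$, delivers the desired estimate
\begin{equation*}
    \norm{u}_{C^{k+2,\alpha}} \le C\norm{\varphi}_{C^{k,\alpha}}
\end{equation*}
and closes the induction. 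The Sobolev version is identical but skips the curvature bootstrap, since $\norm{\Rm}_{H^{k}}$ is not needed here.

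The only real obstacle is the base case, where the generic-tensor constant $\lambda_1$ in Lemma \ref{low_reg_isomorphism_result} is not good enough and one genuinely has to invoke the function-specific spectral input of Lee's Proposition E. Once that is in place, the induction is a mechanical repetition of the tensor-bundle argument already recorded in Lemma \ref{isomorphism_result}, with the shift constants $c_k$ on higher-rank bundles chosen merely large enough to sit above the relevant $\lambda_i$.
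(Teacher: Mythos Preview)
Your proposal is correct and follows essentially the same approach as the paper, which simply records the corollary as the combination of Lemma \ref{isomorphism_result} with \cite[Proposition E]{lee_book}. You have accurately unpacked what that combination means: Proposition E supplies the function-specific indicial data making every $c>0$ admissible for the $k=0$ base case (with the $L^2$-pairing killing the kernel), and the induction is then the verbatim bootstrap of Lemma \ref{isomorphism_result}, including the shift-constant trick $(\Lap+c_k)\nabla^k u = (\Lap+c)\nabla^k u + (c_k-c)\nabla^k u$ on higher tensor bundles.
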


    An operator of particular interest is the Einstein operator

    \begin{align*}
        \Lap_E:=\Lap-2\mathring{R},\qquad (\mathring{R}h)_{ij}:=R_{iklj}h^{kl},
    \end{align*}

    acting on symmetric $2$-tensors, as it appears in the second variation of the expander entropy defined below. On an Einstein manifold of constant $-n+1$, it is related to the well-known Lichnerowicz Laplacian $\Delta_L$ bt the simple identity $\Lap_E=\Delta_L+2(n-1)$.

    \begin{remark}\label{fredholm_remark}
        By \cite[Proposition D]{lee_book}, the indicial radius of the Einstein operator
        is $\frac{n-1}{2}$ and thus by \cite[Theorem C]{lee_book}, $\Lap_E:H^2\to L^2$
        is a Fredholm operator of index zero. By very similar arguments as in the proof of Lemma \ref{isomorphism_result}, one shows that it restricts to a Fredholm operator $\Lap_E:H^{k+2}\to H^k$ for any $k\in\N$, even if the regularity of the conformal compactification is just $C^{2,\alpha}$.
    \end{remark}

    \section{The Expander Entropy for Asymptotically Hyperbolic Manifolds}\label{sec_entropy}
    We will need the  expander entropy from \cite{poincare_einstein_entropy} to be analytic on an $H^k$-neighborhood of a PE metric $\mu_{\mathrm{AH},\wh{g}}$. This section is devoted to detailing the necessary modifications to the arguments in Section $5$ of \cite{poincare_einstein_entropy} to ensure this. Furthermore, we are going to establish a suitable version of the Ebin--Palais slice theorem.
  
    Throughout this section, let $(M^n,\wh{g})$ be a fixed PE manifold of class $C^{2,\alpha}$ and $k>n/2+2$. We write $\mathcal{R}^k\left(M,\wh{g}\right)$ for the space of metrics $g$ which are $H^k$-close to $\wh{g}$:

    \begin{equation*}
        \mathcal{R}^k\left(M,\wh{g}\right) := \left\{g : g > 0, g - \wh{g} \in H^k\left(M\right)\right\}.
    \end{equation*}
 For a large ball ${B}_R$ in $M$, define
    \begin{align*}
        m_{{ADM},\wh{g}}(g,R)
        &= \int_{\partial {B}_R}(\mathrm{div}_{\wh{g}}(g)-d\mathrm{tr}_{\wh{g}}(g))(\nu_{\wh{g}})\dv_{\wh{g}}
        ,\\
        RV_{\wh{g}}(g,R)
        &= \int_{B_R}\dv_g-\dv_{\wh{g}}.
    \end{align*}
    Let us recall the definition of the entropy in \cite[Section 5]{poincare_einstein_entropy}. For $f\in C^{\infty}_c(M)$, let

    \begin{align*}
        \mathcal{W}_{{AH},\wh{g}}(g,f)
        &=\lim_{R \to \infty}\Big(\int_{B_R} \Big(
        \left( |\nabla f|^2+\Sc_g+f \right)e^{-f}-2(n-1) \left( (f+1)e^{-f} - 1 \right)\Big)\dv_g\\
           &\qquad\qquad - m_{{\rm ADM},\wh{g}}(g,R) - 2(n-1)RV_{\wh{g}}(g,R)\Big).
    \end{align*} 
    \begin{definition}
        The {\em expander entropy} of $g$ (relative to  $\wh{g}$) is defined as
        \begin{align*}
            \mu_{{AH},\wh{g}}(g)
            =\inf_{ f\in C_c^{\infty}(M)}\mathcal{W}_{{AH},\wh{g}}(g,f).
        \end{align*}
    \end{definition}
    It can be shown, exactly as in \cite[Section 5]{poincare_einstein_entropy}, that for each $g\in   \mathcal{R}^k\left(M,\wh{g}\right)$, there is a unique minimizer in the definition of the entropy which satisfies the Euler-Lagrange equation

    \begin{align}\label{el_equation_entropy}
        2\Delta f_g +|\nabla f_g|^2-\Sc_g-n(n-1)+2(n-1)f_g=0.
    \end{align}

    In \cite{poincare_einstein_entropy}, the details are carried out for the case where $g-\wh{g}\in C^{k,\alpha}_{\delta}$ for a weight $\delta>\frac{n-1}{2}$. The arguments for Sobolev regularity are exactly the same, up to replacing H\"{o}lder spaces by Sobolev spaces at the right places. Applying the implicit function theorem to \eqref{el_equation_entropy} and using Corollary \ref{cor_Lap_iso} shows that the minimizer depends analytically on the metric, and hence $\mathcal{R}^k\left(M,\wh{g}\right)\ni g\mapsto \mu_{AH,\wh{g}}(g)$ is analytic as well (c.f.\ \cite[Proposition $5.16$]{poincare_einstein_entropy} for the H\"{o}lder regularity case). The first variation, of the entropy is given by
    \begin{align*}
        \left.\frac{d}{dt}\right|_{t=0}\mu_{{AH},\wh{g}}(g+th)=-\int_M \langle \Ric+\nabla^2f_g+(n-1)g,h\rangle e^{-f_g}\dv.
    \end{align*}
    See \cite[Proposition 5.17]{poincare_einstein_entropy} for the full derivation. Consequently, $\mu_{{AH},\wh{g}}$ is nondecreasing along the Ricci flow
    \begin{align*}
     \partial_t g_t=-2\Ric_{g_t}-(n-1)g_t
    \end{align*}
    and constant if and only if $g_t$ is a constant curve of PE metrics. Expanding Ricci solitons are excluded due to an integration by parts argument, see \cite[Corollary 5.18]{poincare_einstein_entropy}. For further purposes, we need the following slice theorem:

    \begin{proposition}\label{entropy_analytic_extension}
        Let
           \begin{equation*}
            \mathcal{S}_{\wh{g}} = \left\{g \in \mathcal{R}^k\left(M, \wh{g}\right) : \div_{\wh{g}} g = 0\right\}.
        \end{equation*}
        Then there is a neighborhood $\mathcal{U} \subset \mathcal{R}^k\left(M,\wh{g}\right)$ of $\wh{g}$ such that any $g \in \mathcal{U}$ can uniquely be written as $g = \varphi^\ast \wt{g}$ for some $\wt{g} \in \mathcal{U} \cap \mathcal{S}_{\wh{g}}$ and a diffeomorphism $\varphi \in \mathrm{Diff}^{k+1}\left(M\right)$ that is $H^{k+1}$-close to the identity. 
    \end{proposition}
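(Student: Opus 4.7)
The natural strategy is to apply the inverse function theorem to the map
\[
\Phi\colon \mathcal{S}_{\hat g}\times \mathcal V \longrightarrow \mathcal R^k(M,\hat g),\qquad \Phi(\tilde g, X) := \varphi_X^{\ast} \tilde g,
\]
at $(\hat g, 0)$, where $\mathcal V$ is a small $H^{k+1}$-neighborhood of $0$ in the space of vector fields on $M$ and $\varphi_X = \exp_{\hat g}(X)$ is the $H^{k+1}$-diffeomorphism obtained from $X$ via the Riemannian exponential of $\hat g$ (well-defined globally for small $X$). Since $k > n/2 + 2$, the usual Sobolev composition and pullback estimates make $\Phi$ analytic on a neighborhood of $(\hat g, 0)$, with $\Phi(\hat g, 0) = \hat g$. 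A direct computation gives the linearization
\[
D\Phi\big|_{(\hat g, 0)}(h, Y) = h + L_Y \hat g
\]
on $\{h \in H^k(\mathrm{Sym}^2 T^{\ast} M) : \div_{\hat g} h = 0\}\oplus H^{k+1}(TM)$.

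To invert this differential, given $\mu \in H^k(\mathrm{Sym}^2 T^{\ast} M)$, I would solve $PY := \div_{\hat g}(L_Y\hat g) = \div_{\hat g}\mu$ for $Y\in H^{k+1}(TM)$ and set $h := \mu - L_Y\hat g$, which is automatically divergence-free. Everything therefore reduces to showing that
\[
P\colon H^{k+1}(TM)\longrightarrow H^{k-1}(T^{\ast} M)
\]
is a topological isomorphism. On the PE background one computes $PY = -\Delta Y + d\,\div_{\hat g}Y + (n-1)Y$, so $P$ is elliptic of Laplace type, and I expect its Fredholm property of index zero between these unweighted Sobolev spaces to follow by the same indicial-radius computation as in the proof of Lemma \ref{low_reg_isomorphism_result}, iterated via the regularity bootstrap of Lemma \ref{isomorphism_result} (cf.\ Remark \ref{fredholm_remark} for the analogous point about $\Delta_E$); the subprincipal $d\,\div$ and zeroth-order terms do not alter the indicial operator. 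Integration by parts, valid by the $H^{k+1}$-decay of $Y$, yields
\[
\langle PY,Y\rangle_{L^2} = -\tfrac{1}{2}\norm{L_Y\hat g}_{L^2}^2,
\]
so any $Y\in \ker P$ is a Killing vector field of $\hat g$ lying in $H^{k+1}(TM)$.

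The main obstacle is therefore ruling out nontrivial decaying Killing fields on a PE manifold. The direct Bochner approach is inconclusive here because $\Ric_{\hat g}<0$ gives the wrong sign; instead, I would combine the overdetermined Killing system $L_Y\hat g = 0$, $\Delta Y = (n-1)Y$ with the asymptotic hyperbolicity of $\hat g$ and the fact that $|Y|^2$ is constant along the isometric flow of $Y$, to conclude that $Y$ vanishes at conformal infinity and hence everywhere by unique continuation for this elliptic system. With the isomorphism of $P$ established, the analytic inverse function theorem provides neighborhoods $\mathcal U'\ni (\hat g, 0)$ and $\mathcal U \ni \hat g$ on which $\Phi$ is an analytic diffeomorphism; after a final shrinkage to ensure $\tilde g := \mathrm{pr}_1(\Phi^{-1}(g)) \in \mathcal U$, this gives the unique decomposition $g = \varphi_X^{\ast}\tilde g$ claimed, with $\tilde g \in \mathcal U\cap\mathcal S_{\hat g}$ and $\varphi_X\in \mathrm{Diff}^{k+1}(M)$ close to the identity.
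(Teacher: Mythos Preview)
Your approach is the standard Ebin--Palais slice argument and coincides with what the paper does: the paper gives no independent proof here but simply cites Lemma~6.1 of \cite{poincare_einstein_entropy} and remarks that the weighted H\"older spaces there may be replaced by unweighted Sobolev spaces, and that lemma is proved by exactly the mechanism you describe (linearize the pullback action, reduce to invertibility of $P=\div_{\hat g}\!\circ L_{(\cdot)}\hat g$ on vector fields, and invoke Lee's Fredholm theory \cite{lee_book}). The only point worth tightening is the vanishing of $L^2$ Killing fields---your flow/unique-continuation sketch is not quite a proof as written, since decay at infinity does not by itself feed into Gronwall for the first-order Killing system; the clean route is either to compute the indicial radius of $P$ directly and quote \cite{lee_book}, or to note that an $H^{k+1}$ Killing field of $\hat g$ extends to a conformal Killing field of the compactification vanishing on $\partial M$ and then invoke unique continuation for that overdetermined system.
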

    \begin{proof}
        This is a rephrasing of Lemma $6.1$ in \cite{poincare_einstein_entropy}, which we refer the reader to for details. The proof of the result holds after the weighted H\"older spaces in \cite{poincare_einstein_entropy} are replaced with unweighted Sobolev spaces.
    \end{proof}
\section{Positive Mass and Volume Comparison}\label{sec_pmt}
    Let $(M^n,\wh{g})$ and $k$ be as before.
    Given   $g\in \mathcal{R}^k\left(M,\wh{g}\right)$, the volume-renormalized mass of $g$ with respect to $\wh{g}$ is
    \begin{equation} \label{eq-first-m-defn}
        m_{{VR},\wh {g}}(g)= \lim_{R \to \infty}\left( m_{{ADM},\wh{g}}(g,R)+2(n-1)RV_{\wh{g}}(g,R) \right).
    \end{equation}
    We have the following theorem (c.f.\ \cite[Theorem 3.1]{poincare_einstein_entropy}):
    \begin{theorem}
        For $g\in \mathcal{R}^k\left(M,\wh{g}\right)$, the limit
        \begin{align*}
            S_{\wh{g}}(g) = \lim_{R \to \infty}
        \Big(& \int_{B_R} \left( \Sc_g + n(n-1) \right) \dv_g -m_{{ADM},\wh{g}}(g,R) 
        - 2(n-1)RV_{\wh{g}}(g,R)\Big)
        \end{align*}
        is well-defined and finite. In particular, $m_{{ VR},\wh{g}}(g)$ is well-defined and finite if $\Sc_g + n(n-1)\in L^1(M)$. 
    \end{theorem}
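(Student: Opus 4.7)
The plan is to expand $\Sc_g + n(n-1)$ around $\wh{g}$ and integrate by parts over $B_R$, so that the top-order (non-$L^1$) contributions are precisely $m_{{ADM},\wh{g}}(g,R) + 2(n-1)RV_{\wh{g}}(g,R)$, with an integrable quadratic bulk remainder left over. Set $h := g - \wh{g} \in H^k(M)$ with $k > n/2 + 2$; by Sobolev embedding $h \in L^\infty \cap W^{2,2}$, and after shrinking the neighborhood we may assume $|h|_{\wh{g}}$ is pointwise small. Using $\Ric_{\wh{g}} = -(n-1)\wh{g}$ and $\Sc_{\wh{g}} = -n(n-1)$, the standard pointwise expansion of scalar curvature gives
$$\Sc_g + n(n-1) = \div_{\wh{g}}\div_{\wh{g}} h - \Lap_{\wh{g}}\tr_{\wh{g}} h + (n-1)\tr_{\wh{g}} h + Q_1(h),$$
where $|Q_1(h)| \leq C(|h|^2 + |\nabla h|^2 + |h||\nabla^2 h|)$ pointwise, and $\dv_g = (1 + \tfrac{1}{2}\tr_{\wh{g}} h + Q_2(h))\dv_{\wh{g}}$ with $|Q_2(h)| \leq C|h|^2$.

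Next, applying the divergence theorem on $B_R$ and using $\div_{\wh{g}}\wh{g} = 0$, $d\tr_{\wh{g}}\wh{g} = 0$ yields
$$\int_{B_R}\bigl(\div_{\wh{g}}\div_{\wh{g}} h - \Lap_{\wh{g}}\tr_{\wh{g}} h\bigr)\dv_{\wh{g}} = \int_{\partial B_R}(\div_{\wh{g}} h - d\tr_{\wh{g}} h)(\nu_{\wh{g}})\dv_{\wh{g}} = m_{{ADM},\wh{g}}(g,R).$$
Splitting $\int_{B_R}(\Sc_g + n(n-1))\dv_g = \int_{B_R}(\Sc_g + n(n-1))\dv_{\wh{g}} + \int_{B_R}(\Sc_g + n(n-1))(\dv_g - \dv_{\wh{g}})$ and combining with $2(n-1)RV_{\wh{g}}(g,R) = (n-1)\int_{B_R}\tr_{\wh{g}} h\,\dv_{\wh{g}} + 2(n-1)\int_{B_R}Q_2(h)\,\dv_{\wh{g}}$, I would collect all pieces to obtain
$$\int_{B_R}(\Sc_g + n(n-1))\dv_g - m_{{ADM},\wh{g}}(g,R) - 2(n-1)RV_{\wh{g}}(g,R) = \int_{B_R} R_Q\,\dv_{\wh{g}},$$
where $R_Q$ bundles $Q_1$, $Q_2$, and the cross term $(\Sc_g + n(n-1))(\dv_g - \dv_{\wh{g}})$, and satisfies $|R_Q| \leq C(|h|^2 + |\nabla h|^2 + |h||\nabla^2 h|)$ pointwise.

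Finally, Cauchy-Schwarz together with $h,\nabla h,\nabla^2 h \in L^2(M,\wh{g})$ gives
$$\int_M |R_Q|\,\dv_{\wh{g}} \leq C\bigl(\norm{h}_{L^2}^2 + \norm{\nabla h}_{L^2}^2 + \norm{h}_{L^2}\norm{\nabla^2 h}_{L^2}\bigr) \leq C\norm{h}_{H^2}^2 < \infty,$$
so $R_Q \in L^1(M,\wh{g})$ and $\int_{B_R} R_Q\,\dv_{\wh{g}} \to \int_M R_Q\,\dv_{\wh{g}}$ by dominated convergence, which proves that $S_{\wh{g}}(g)$ is well-defined and finite. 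The ``in particular'' statement then follows by rearranging to isolate $\lim_R(m_{{ADM},\wh{g}}(g,R) + 2(n-1)RV_{\wh{g}}(g,R))$. The main obstacle is the bookkeeping: every contribution coming from the top-order piece $\div_{\wh{g}}\div_{\wh{g}} h - \Lap_{\wh{g}}\tr_{\wh{g}} h$ must be either entirely absorbed into the ADM boundary integral via integration by parts or paired with a factor of $h$, so that the Sobolev $L^\infty$-smallness of $h$ takes over; a surviving term of the form $|\nabla^2 h|^2$ would be out of reach of the unweighted $H^k$ topology without extra decay. This algebraic identity is essentially the one used in the weighted H\"older setting of \cite{poincare_einstein_entropy}, and the new content is to verify that $k > n/2 + 2$ suffices to close the quadratic estimates.
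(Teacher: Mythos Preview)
Your argument is correct and is essentially the same as the paper's own proof: both Taylor-expand $\Sc_g+n(n-1)$ and $\dv_g$ around $\wh{g}$, use $\Ric_{\wh{g}}=-(n-1)\wh{g}$ to see that the linear terms assemble exactly into $m_{{ADM},\wh{g}}(g,R)+2(n-1)RV_{\wh{g}}(g,R)$ after the divergence theorem, and then observe that the quadratic remainder is schematically $\sum_{i+j\le 2}\nabla^i h*\nabla^j h$ and hence lies in $L^1$ since $h\in H^2$. Your write-up is simply more explicit about the bookkeeping (in particular the absence of any $|\nabla^2 h|^2$ term), but there is no substantive difference.
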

    \begin{proof}
        The corresponding version in \cite{poincare_einstein_entropy} is stated for weighted H\"{o}lder spaces and we briefly outline a proof adapted to the present situation. If $h=g-\hat{g}$, Then Taylor expansion yields
        \begin{align*}
            \Sc_g + n(n-1)&=D\Sc_{\hat{g}}[h]+O(h^2)=\div_{\hat{g}}(\div_{\hat{g}} h -d\mathrm{tr}_{\hat{g}}h)-\langle \Ric_{\hat{g}},h\rangle+O(h^2)\\
            \dv_g&=\dv_{\hat{g}}+D\dv_{\hat{g}}[h]+O(h^2)=\dv_{\hat{g}}+\frac{1}{2}\mathrm{tr}_{\hat{g}}h\dv_{\hat{g}}+O(h^2)
        \end{align*}
        and, because $\langle\Ric_{\hat{g}},h\rangle=-(n-1)\mathrm{tr}_{\hat{g}}h$, we can combine these expansions to
        \begin{align*}
            (\Sc_g + n(n-1))\dv_g-\div_{\hat{g}}(\div_{\hat{g}} h -d\mathrm{tr}_{\hat{g}}h)\dv_{\hat{g}} -2(n-1)(\dv_g-\dv_{\hat{g}})=O(h^2)\dv_{\hat{g}}.
        \end{align*}
        The error term is of the form
        \begin{align*}
            O(h^2)=\sum_{i+j\leq 2}\nabla^ih*\nabla^jh,
        \end{align*}
        so that the right hand side is integrable under the assumptions of the theorem.
        Thus integration over a large ball $B_R$, applying the divergence theorem, and letting $R\to\infty$ yields the result.
    \end{proof}
    The $L^2$-gradient of this functional is given by
      \begin{align*}
        \nabla S_{\wh{g}}=-\Ric_ g+\frac{1}{2}\Sc_g\cdot g+\frac{1}{2}(n-1)(n-2)g,
    \end{align*}
    which implies that $g \mapsto S_{\wh{g}}(g)$ is analytic functional on $H^k$ whose critical points are precisely the PE metrics.

    Finally, we want to introduce the renormalized volume. Let $g\in \mathcal{R}^k\left(M,\wh{g}\right)$ such that $g-\wh{g}=O_1(e^{-\delta r})$ for some $\delta>n-1$, i.e.\
    \begin{align*}
    |g-\wh{g}|_{\wh{g}}+|\nabla^{\wh{g}} (g-\wh{g})|_{\wh{g}}\leq Ce^{-\delta r}
    \end{align*}
   for some $C=0$. 
    Then, $m_{{ADM},\wh{g}}(g,R)\to 0$ at infinity and we have a well-defined limit
    \begin{align*}
        RV_{\wh{g}}(g) = \lim_{R \to \infty} RV_{\wh{g}}(g,R).
    \end{align*}
    Therefore, 
    \begin{align*}
        S_{\wh{g}}(g)=\int_{M} \left( \Sc_g + n(n-1) \right) \dv_g+2(n-1)RV_{\wh{g}}(g).
    \end{align*}
    In particular, $\Sc_g + n(n-1)\in L^1$ and the volume-renormalized mass reduces to a multiple of the renormalized volume. We now can prove the following theorem:
    \begin{theorem}\label{thm_local_positive_mass}
        Let $(M,\wh{g})$ be a PE manifold and $\delta\in (n-1,n)$. Then the following are equivalent:
        \begin{itemize}
            \item[(i)] For all metrics $g$ sufficiently close to $\wh{g}$, we have $\mu_{AH,\wh{g}}(g)\leq 0$.
            \item[(ii)] For all metrics $g$ sufficiently close to $\wh{g}$ with $\Sc_g+n(n-1)$ being nonnegative and integrable, we have $m_{{VR},\wh{g}}(g)\geq 0$. 
            \item[(iii)]  For all metrics $g$ sufficiently close to $\wh{g}$ with $g-\wh{g}=O_1(e^{-\delta r})$ and $\Sc_g+n(n-1)\geq0,$ we have $RV_{\wh{g}}(g)\geq 0$.
        \end{itemize}
    \end{theorem}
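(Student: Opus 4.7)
The plan is to prove the three-way equivalence by establishing $(i) \Leftrightarrow (ii)$ and $(ii) \Leftrightarrow (iii)$ separately. The first of these is essentially \cite[Theorem D]{poincare_einstein_entropy}, and we would transcribe the argument from the weighted H\"older setting used there into the unweighted $H^k$ setting developed in Section \ref{sec_entropy}. Concretely, one substitutes the Euler--Lagrange equation \eqref{el_equation_entropy} for the minimizer $f_g$ into $\mathcal{W}_{{AH},\wh{g}}(g, f_g)$, integrates by parts, and obtains a schematic identity
\begin{equation*}
    \mu_{{AH},\wh{g}}(g) = -m_{{VR},\wh{g}}(g) + Q(g, f_g),
\end{equation*}
where $Q(g, f_g) \geq 0$ whenever $\Sc_g + n(n-1) \geq 0$. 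Nonpositivity of $\mu_{{AH},\wh{g}}(g)$ then forces $m_{{VR},\wh{g}}(g) \geq 0$, giving $(i) \Rightarrow (ii)$; the converse is a contrapositive Morse-type argument producing a metric near $\wh{g}$ with nonnegative scalar curvature deficit and strictly negative volume-renormalized mass whenever $\wh{g}$ fails to be a local maximizer. The analyticity and slice-theorem results from Section \ref{sec_entropy} are essential for adapting the argument to Sobolev regularity.

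For $(ii) \Rightarrow (iii)$, the hypothesis $g - \wh{g} = O_1(e^{-\delta r})$ with $\delta > n-1$ forces the ADM integrand $\mathrm{div}_{\wh{g}}(g) - d\,\mathrm{tr}_{\wh{g}}(g)$ to decay like $e^{-\delta r}$, while $\mathrm{Vol}(\partial B_R, \wh{g})$ grows like $e^{(n-1)R}$, so $m_{{ADM},\wh{g}}(g, R) \to 0$ and hence $m_{{VR},\wh{g}}(g) = 2(n-1) RV_{\wh{g}}(g)$. The same decay, combined with the Taylor expansion of $\Sc_g + n(n-1)$ carried out in the proof of the preceding theorem, ensures $\Sc_g + n(n-1) \in L^1$; hypothesis $(ii)$ then yields $m_{{VR},\wh{g}}(g) \geq 0$, equivalent to $RV_{\wh{g}}(g) \geq 0$.

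The main new content is the reverse implication $(iii) \Rightarrow (ii)$. Given $g$ near $\wh{g}$ with $\Sc_g + n(n-1) \geq 0$ and integrable, we would construct a conformally related metric $\tilde{g} = \phi^{4/(n-2)} g$ with constant scalar curvature $\Sc_{\tilde{g}} = -n(n-1)$. The conformal factor $\phi > 0$ close to $1$ is obtained by solving the Yamabe-type equation
\begin{equation*}
    c_n \Delta_g \phi + \Sc_g \phi + n(n-1) \phi^{\frac{n+2}{n-2}} = 0, \qquad c_n = \frac{4(n-1)}{n-2},
\end{equation*}
via the implicit function theorem. The linearization at $(\wh{g}, 1)$ equals $c_n(\Delta_{\wh{g}} + n)$, whose invertibility on $H^{k+2} \to H^k$ is guaranteed by Corollary \ref{cor_Lap_iso}. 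Elliptic bootstrapping combined with a weighted estimate for the linearized conformal Laplacian should promote $\phi - 1$ to the pointwise decay $O_1(e^{-\delta r})$ required by $(iii)$, with the exponential rate controlled by the indicial roots of $\Delta_{\wh{g}} + n$ and the decay of the source $\Sc_g + n(n-1)$ at infinity. A Pohozaev-type identity exploiting the $\phi$-equation and the conformal transformation rule for scalar curvature should then give $m_{{VR},\wh{g}}(g) \geq m_{{VR},\wh{g}}(\tilde{g})$, with positive correction controlled by $\int_M (\Sc_g + n(n-1)) \phi^{(n+2)/(n-2)} \dv_g \geq 0$. Applying $(iii)$ to $\tilde{g}$ then yields $m_{{VR},\wh{g}}(\tilde{g}) = 2(n-1) RV_{\wh{g}}(\tilde{g}) \geq 0$, and hence $m_{{VR},\wh{g}}(g) \geq 0$.

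The hard part will be establishing the mass-comparison identity in the AH setting: the volume-renormalized mass combines an ADM-type boundary term with a bulk renormalized-volume correction, and the Pohozaev manipulation must track both contributions simultaneously as $R \to \infty$, in contrast to the classical Schoen--Yau argument for asymptotically Euclidean manifolds. A secondary obstacle is upgrading the regularity of $\phi - 1$ from $H^{k+2}$ to pointwise exponential decay at a rate in $(n-1, n)$, which requires a weighted elliptic analysis on the PE background tied to the indicial structure of the linearized conformal Laplacian.
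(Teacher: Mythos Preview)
Your treatment of $(i)\Leftrightarrow(ii)$ and $(ii)\Rightarrow(iii)$ matches the paper's. The gap is in $(iii)\Rightarrow(ii)$.

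You propose to take a metric $g$ that is merely $H^k$-close to $\wh{g}$, conformally change it to $\tilde g=\phi^{4/(n-2)}g$ with constant scalar curvature, and then invoke $(iii)$ for $\tilde g$. But $(iii)$ requires $\tilde g-\wh{g}=O_1(e^{-\delta r})$ with $\delta>n-1$, and this cannot follow from your bootstrapping of $\phi-1$. Writing
\[
\tilde g-\wh{g}=\phi^{4/(n-2)}(g-\wh{g})+(\phi^{4/(n-2)}-1)\wh{g},
\]
even if the second term decays well, the first inherits only the decay of $g-\wh{g}$, and an $H^k$-perturbation on an AH manifold need not decay pointwise any faster than roughly $e^{-(n-1)r/2}$. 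No weighted elliptic estimate for the conformal Laplacian can manufacture decay of $g-\wh{g}$ that was never assumed. So the hypothesis of $(iii)$ is simply not available for $\tilde g$, and the argument stalls.

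The paper circumvents this by a contrapositive combined with an approximation step. Assuming $m_{VR,\wh{g}}(g)<0$ for some $g$ with constant scalar curvature $-n(n-1)$ (the reduction to constant scalar curvature uses the same mass-monotonicity you sketch), one approximates $g$ in $H^k$ by metrics $g_i$ with $g_i-\wh{g}\in C_c^\infty$, then conformally changes each $g_i$ to a constant scalar curvature metric $\bar g_i$. Because the source term is compactly supported, the conformal factor $w_i$ does satisfy $w_i=O_1(e^{-\delta r})$, so $\bar g_i-\wh{g}=O_1(e^{-\delta r})$ and $(iii)$ applies to $\bar g_i$. The sign of $m_{VR,\wh{g}}(\bar g_i)$ is then controlled via the continuity of the analytic functional $S_{\wh{g}}$ on $H^k$, which carries the negativity from $g$ to the $\bar g_i$. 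This approximation-then-conformal-change, with continuity of $S_{\wh{g}}$ as the bridge, is the missing idea in your proposal.
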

    \begin{proof}
        The equivalence $(i)\Leftrightarrow(ii)$ was essentially proven in \cite[Theorem 6.8]{poincare_einstein_entropy}, with the only difference again that  we use the $H^k$-topology with $k>\frac{n}{2}+2$ for the assertion here instead of the $C^{k,\alpha}$-topology. We therefore omit the details here.

        Observe that the implication $(ii)\Rightarrow (iii)$ is obvious, since $m_{{VR},\wh{g}}(g)=2(n-1)RV_{\wh{g}}(g)$ and $\Sc_g+n(n-1)\in L^1$ if $g-\wh{g}=O_1(e^{-\delta r})$. 

        Now we are going to prove $(iii)\Rightarrow (ii)$. Suppose that there is a metric  $g$ with $\Sc_{g}\geq-n(n-1)$ which is $H^k$-close to $\wh{g}$ with $m_{{VR},\wh{g}}(g)<0$. Since the constant scalar curvature metric in the conformal class will have smaller mass (c.f.\ \cite[Theorem 4.5]{poincare_einstein_entropy}), we may assume that $\Sc_{g}=-n(n-1)$. Let $g_i$, $i\in\N$, be a sequence of metrics converging to $g$ in $H^k$ with $g_i-\wh{g}\in C^{\infty}_{c}$. Let $\overline{g}_i=e^{2w_i}g_i$ be the conformal metric of constant scalar curvature $-n(n-1)$. Then $\overline{g}_i\to g$ in $H^k$. 
    
        A bootstrapping argument for the equation on the conformal factor (c.f.\ the proof of \cite[Theorem 4.9]{poincare_einstein_entropy}) shows that $w_i\in O_1(e^{-\delta r})$, so that $\overline{g}_i-\hat{g}\in O_1(e^{-\delta r})$ and

        \begin{align*}
            2(n-1)RV_{\wh{g}}(\overline{g}_i)=m_{{VR},\wh{g}}(\overline{g}_i)=-S_{\wh{g}}(\overline{g}_i)\to -S_{\wh{g}}(g)= m_{{VR},\wh{g}}(g)<0.
        \end{align*}

        Here, we used that $\overline{g}_i$ and $g$ have constant scalar curvature $-n(n-1)$ and that $S_{\wh{g}}$ is an analytic functional on $H^k$. Hence, $RV_{\wh{g}}(\overline{g}_i)<0$ for sufficiently large $i$.

        If now $g_j$, $j\in\N$, was a sequence of constant scalar curvature metrics converging to $\wh{g}$ with $m_{{VR},\wh{g}}(g_i)<0$, we can approximate as above each of the $g_i$ with a sequence of constant scalar curvature metrics $\overline{g}_{i,j}$ with $\overline{g}_{i,j}-\wh{g}\in O_1(e^{-\delta r})$. Because for each fixed $i$, we obtain $m_{{VR},\wh{g}}(g_{i,j})<0$ for sufficiently large $j$, we can construct a subsequence $g_{i,j(i)}$, $i\in\N$ of the double sequence with $m_{{VR},\wh{g}}(g_{i,j(i)})<0$ which converges to $\wh{g}$. This contradicts (iii) and finishes the proof of the theorem.
    \end{proof}

    \begin{remark}
        It is not hard to see that if all the equivalent assertions in Theorem \ref{thm_local_positive_mass} hold, equality in (i) and (ii) can only be achieved if if $g$ is another PE metric. In the case (i), this is just because $g$ must be another critical point of $\mu_{AH,\hat{g}}$. In case (ii), \cite[Theorem 4.5]{poincare_einstein_entropy} implies that $g$ must be of constant scalar curvature $-n(n-1)$ so that $\mu_{AH,\hat{g}}(g)=-m_{{VR},\wh{g}}(g)=0$. Due to case (i), $g$ must be PE. If we have equality in (iii), $m_{{VR},\wh{g}}(g)=2(n-1)RV_{\wh{g}}(g)=0$ and (ii) imply that $g$ must be Einstein. Furthermore, the decay assumption on $g-\wh{g}$ and the Fefferman-Graham expansion for PE metrics even implies that $g$ must be isometric to $\hat{g}$.
    \end{remark}

\section{A \L ojasiewicz--Simon Inequality for the Entropy}\label{loj_section}    
    In this section we prove a \L ojasiewicz--Simon inequality for $\mu_{\mathrm{AH},\wh{g}}$ following the strategy of Colding--Minicozzi \cite{einstein_loj}. Using this approach, a similar \L ojasiewicz--Simon has also recently been established for ALE-manifolds by Deruelle--Ozuch \cite{ale_loj}. 
Again, throughout this section let $(M^n,\wh{g})$ be a fixed PE manifold of class $C^{2,\alpha}$  and $k>n/2+2$.    
     \begin{theorem}\label{loj_general}
       Let $\mathcal{A},\mathcal{B} \subset L^2\left(M\right)$ be closed subspaces such that $H^k\left(M\right) \cap \mathcal{A}$ and $H^{k-2}\left(M\right) \cap \mathcal{B}$ are, respectively, closed subsets of $H^k\left(M\right)$ and $H^{k-2}\left(M\right)$. Assume $\mathcal{F} : \mathcal{U} \rightarrow \mathbb{R}$ is analytic where $\mathcal{U} \subset H^k\left(M\right) \cap \mathcal{A}$ is a neighborhood of $0$. Further assume $\mathcal{F}$ is such that
            \begin{enumerate}
            \item The $L^2$-gradient of $\mathcal{F}$ is a $C^1$ map $\nabla \mathcal{F}: \mathcal{U} \rightarrow H^{k-2}\left(M\right)$ with $\nabla \mathcal{F}\left(0\right) = 0$ and
                \begin{equation*}
                \norm{\nabla \mathcal{F}\left(x\right) - \nabla \mathcal{F}\left(y\right)}_{L^2\left(M\right)} \leq C\norm{x-y}_{H^2\left(M\right)}.
            \end{equation*}
            \item The Fr\'echet derivative of $\nabla \mathcal{F}: \mathcal{U} \rightarrow H^{k-2}\left(M\right)$ is continuous when viewed as a map from $H^2\left(M\right)$ to $L^2\left(M\right)$.
            \item The linearization $L$ of $\nabla \mathcal{F}$ at $0$ is symmetric and bounded from $H^2\left(M\right)$ to $L^2\left(M\right)$ and Fredholm when viewed as a map from $H^2\left(M\right) \cap \mathcal{A}$ to $L^2\left(M\right) \cap \mathcal{B}$.
            \item The previous point holds when $H^2$ and $L^2$ are, respectively, replaced by $H^k$ and $H^{k-2}$.
        \end{enumerate}
            Then there exists a $\theta \in \left(0,1\right]$ and some constant $C > 0$ such that, for sufficiently small $x \in H^2\left(M\right) \cap \mathcal{A}$, we have 
            \begin{equation}
            \abs{\mathcal{F}\left(x\right) - \mathcal{F}\left(0\right)}^{2-\theta} \leq C\norm{\nabla \mathcal{F}\left(x\right)}^2_{L^2\left(M\right)}.
        \end{equation}
    \end{theorem}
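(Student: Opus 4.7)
The plan is to follow the Colding--Minicozzi strategy and reduce this infinite-dimensional Łojasiewicz--Simon inequality to the classical finite-dimensional Łojasiewicz inequality for real-analytic functions via a Lyapunov--Schmidt reduction onto $K := \ker L$. Hypotheses (3)--(4) ensure that $L$ is self-adjoint and Fredholm of index zero in both the $(H^2, L^2)$ and $(H^k, H^{k-2})$ scales. Elliptic regularity forces $K$ to be independent of scale, hence finite-dimensional and contained in $H^k$. Writing $\Pi$ for the $L^2$-orthogonal projection onto $K$ (bounded on every $H^j$), the operator $L|_{K^\perp}$ restricts to isomorphisms $H^2 \cap K^\perp \cap \mathcal{A} \to L^2 \cap K^\perp \cap \mathcal{B}$ and $H^k \cap K^\perp \cap \mathcal{A} \to H^{k-2} \cap K^\perp \cap \mathcal{B}$.

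Next, consider $G: \mathcal{U} \to H^{k-2} \cap K^\perp \cap \mathcal{B}$ defined by $G(x) := (I-\Pi)\nabla \mathcal{F}(x)$. We have $G(0) = 0$, and $D_{K^\perp}G(0) = L|_{K^\perp}$ is an isomorphism in the $(H^k, H^{k-2})$ scale. Since $\nabla\mathcal{F}: \mathcal{U} \to H^{k-2}$ inherits analyticity from $\mathcal{F}$, the analytic implicit function theorem produces a real-analytic map $\Phi: V \to K^\perp \cap H^k$ on a neighborhood $V \subset K$ of $0$, with $\Phi(0) = 0$ and $G(k + \Phi(k)) \equiv 0$ for $k \in V$. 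The reduced function $f(k) := \mathcal{F}(k + \Phi(k))$ is then real-analytic on $V$, and a short chain-rule computation using $\nabla\mathcal{F}(k+\Phi(k)) \in K$ (by construction) and $\Phi'(k)K \subset K^\perp$ gives $\nabla f(k) = \nabla\mathcal{F}(k+\Phi(k))$. The classical finite-dimensional Łojasiewicz inequality for real-analytic functions then yields $\theta \in (0,1]$ and a constant $C$ with
\[ |f(k) - f(0)|^{2-\theta} \leq C\,|\nabla f(k)|^2, \quad k \in V. \]

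To transfer this to arbitrary small $x \in H^2 \cap \mathcal{A}$, set $x_0 := \Pi x + \Phi(\Pi x)$, so that $x - x_0 \in K^\perp$. Hypothesis (2) supplies the $L^2$-Taylor expansion $\nabla\mathcal{F}(x) - \nabla\mathcal{F}(x_0) = L(x - x_0) + o(\|x - x_0\|_{H^2})$; applying $(I-\Pi)$ and using invertibility of $L|_{K^\perp}$ in the $(H^2, L^2)$ scale gives
\[ \|x - x_0\|_{H^2} \leq C\,\|(I-\Pi)\nabla\mathcal{F}(x)\|_{L^2} \leq C\,\|\nabla\mathcal{F}(x)\|_{L^2}. \]
Since $\nabla\mathcal{F}(x_0) \in K$ is $L^2$-orthogonal to $x - x_0$, hypothesis (1) together with the fundamental theorem of calculus yields $|\mathcal{F}(x) - \mathcal{F}(x_0)| \leq C\|x-x_0\|_{H^2}^2$, while (1) also gives $|\nabla f(\Pi x)| = \|\nabla\mathcal{F}(x_0)\|_{L^2} \leq \|\nabla\mathcal{F}(x)\|_{L^2} + C\|x-x_0\|_{H^2}$. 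Combining these with the finite-dimensional Łojasiewicz inequality applied at $k = \Pi x$ produces the desired bound after absorbing constants and, if necessary, shrinking $\theta$.

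The main obstacle is this final transfer step: the target estimate must be valid for $x$ merely close to $0$ in the weak $H^2$ norm, while the analytic input (implicit function theorem and reduced Łojasiewicz inequality) lives naturally at the higher $H^k$ scale. Hypotheses (1) and (2) are engineered precisely to interpolate between the two scales, but careful bookkeeping between the Sobolev norms in the Taylor remainder estimates is the one genuinely delicate technical point.
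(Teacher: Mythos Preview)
Your approach is correct and is a legitimate variant of the Lyapunov--Schmidt reduction, but it is \emph{not} the parametrization the paper uses. The paper follows Colding--Minicozzi literally: rather than solving $(I-\Pi)\nabla\mathcal{F}(k+v)=0$ for $v=\Phi(k)$ and restricting $\mathcal{F}$ to the graph $k\mapsto k+\Phi(k)$, they invert the full nonlinear map $\mathcal{N}:=\nabla\mathcal{F}+\Pi_{\mathcal{K}}$ near $0$ via the inverse function theorem (noting $D_0\mathcal{N}=L+\Pi_{\mathcal{K}}$ is Fredholm index zero with trivial kernel), obtain an analytic local inverse $\Phi=\mathcal{N}^{-1}$, and set $\mathcal{G}:=\mathcal{F}\circ\Phi$. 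Their transfer step then runs along the straight segment $y_t=\Pi_{\mathcal{K}}(x)+t\nabla\mathcal{F}(x)$ joining $\Pi_{\mathcal{K}}(x)$ to $\mathcal{N}(x)$, using the Lipschitz bounds on $\Phi$ in both the $(H^k,H^{k-2})$ and $(H^2,L^2)$ scales to control $\|\nabla\mathcal{G}(y_t)\|_{L^2}$ by $\|\nabla\mathcal{F}(x)\|_{L^2}$ and to estimate $|\mathcal{F}(x)-\mathcal{G}(\Pi_{\mathcal{K}}(x))|$. Your ``natural constraint'' reduction is the older Simon-style route; it is slightly more direct conceptually (the reduced gradient identity $\nabla f(k)=\nabla\mathcal{F}(k+\Phi(k))$ is cleaner than the paper's chain-rule estimates), but it requires a separate argument that $(I-\Pi)D\nabla\mathcal{F}$ restricted to $K^\perp$ stays invertible along the segment from $x_0$ to $x$, which you obtain from hypothesis~(2). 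The paper's approach trades that invertibility check for the single global inversion of $\mathcal{N}$.

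One clarification on your closing worry: the paper's proof, like yours, actually establishes the inequality for $x$ small in $H^k\cap\mathcal{A}$ (this is what is written in the proof and what is used in the application to $\mu_{\mathrm{AH},\hat g}$), so the $H^2$ in the theorem statement should be read as specifying the ambient space rather than the topology of smallness. You need not bridge an $H^2$-to-$H^k$ gap.
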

    It is worth noting here that the statement of Theorem \ref{loj_general} is more in line with the one found in \cite{einstein_loj} rather than \cite{ale_loj}. In particular, the latter work needs to ensure the $L^2$-kernel of a certain differential operator $L$ is equal to the $L^2$-co-kernel of $L$, which leads the authors of \cite{ale_loj} to deal with a type of weighted H\"older spaces. The authors of \cite{ale_loj} also need to work with these weighted spaces to show $L$ is Fredholm. This all introduces difficulties in showing stability results in a companion paper \cite{der_ozuch_ale_stability}. On the other hand, in our setting of asymptotically hyperbolic manifolds, we are luckily able to bypass these difficulties. This is in large part due to work by Lee \cite{lee_book}, which gives the needed Fredholmness of our stability operator even when dealing with unweighted spaces (see also Remark \ref{fredholm_remark}). This allows us to closely mimic strategies from the compact setting when proving both Theorem \ref{loj_general} and our (in)stability theorems in Section \ref{stability_thms}.
    \begin{remark}
        In a similar vein, one can replace the Sobolev spaces in Theorem \ref{loj_general} with weighted versions $H^k_\delta$, $L^2_\delta$ for a certain class of weights $\delta$ (see \cite{lee_book}). The proof also goes through if one uses weighted H\"older spaces $C^{2,\alpha}_\delta$, $C^{0,\alpha}_\delta$ in appropriate places and changes the theorem to be more in line with \cite{einstein_loj,ale_loj}. That all being said, the current form of Theorem \ref{loj_general} will be sufficient for our purposes. 
    \end{remark}
    Now on to the proof. The following lemmas allow us to apply a Lyapunov--Schmidt type reduction to prove Theorem \ref{loj_general}. These auxiliary results will all implicitly also assume the conditions in the statement of Theorem \ref{loj_general}. Also, we set $\mathcal{K} := \ker_{L^2}\left(L\right)$ and $\mathcal{N} := \nabla \mathcal{F} + \Pi_{\mathcal{K}}$, where $\Pi_{\mathcal{K}}$ is the $L^2$-orthogonal projection onto $\mathcal{K}$. 
        \begin{lemma}\label{inverse_map_lemma}
        There is an open neighborhood of $0 \in  H^{k-2}\left(M\right) \cap \mathcal{B}$, say $\mathcal{O}$, and a map\\ $\Phi: \mathcal{O} \rightarrow H^k\left(M\right) \cap \mathcal{A}$ with $\Phi\left(0\right) = 0$ and $C > 0$ such that for sufficiently small $x,y \in \mathcal{O}$ and $z \in H^k\left(M\right) \cap \mathcal{A}$,
        \begin{enumerate}
            \item[(i)] $\Phi \circ \mathcal{N}\left(z\right) = z$ and $\mathcal{N} \circ \Phi\left(x\right) = x$.
            \item[(ii)] $\norm{\Phi\left(x\right) - \Phi\left(y\right)}_{H^2\left(M\right)} \leq C\norm{x-y}_{L^2\left(M\right)}$ and $\norm{\Phi\left(x\right) - \Phi\left(y\right)}_{H^k\left(M\right)} \leq C\norm{x-y}_{H^{k-2}\left(M\right)}$.
            \item[(iii)] $\mathcal{F} \circ \Phi$ is analytic on $\mathcal{U}$.
        \end{enumerate}
    \end{lemma}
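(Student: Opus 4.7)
The plan is to set up a classical Lyapunov--Schmidt style inversion for the perturbed gradient $\mathcal{N}=\nabla\mathcal{F}+\Pi_{\mathcal{K}}$. The point of adding $\Pi_{\mathcal{K}}$ is exactly to repair the obstruction to invertibility of $\nabla\mathcal{F}$: although the linearization $L$ fails to be injective on $\mathcal{K}$, the operator $L+\Pi_{\mathcal{K}}$ should be an isomorphism, so the Banach space inverse function theorem will apply to $\mathcal{N}$, and $\Phi$ will be defined as the resulting local inverse.

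First I would verify that $L+\Pi_{\mathcal{K}}$ is a topological isomorphism both as a map $H^{k}\cap\mathcal{A}\to H^{k-2}\cap\mathcal{B}$ and as a map $H^{2}\cap\mathcal{A}\to L^{2}\cap\mathcal{B}$. For injectivity at the $L^{2}$ scale, if $(L+\Pi_{\mathcal{K}})u=0$, decompose $u=u_{0}+u_{1}$ in $L^{2}$ with $u_{0}\in\mathcal{K}$ and $u_{1}\in\mathcal{K}^{\perp}$: then $Lu_{1}\in\mathcal{K}^{\perp}$ by symmetry of $L$ and equals $-u_{0}\in\mathcal{K}$, so both are zero, and $u_{1}\in\ker L=\mathcal{K}$ forces $u_{1}=0$. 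For surjectivity, Fredholmness together with symmetry implies that the range of $L$ coincides with the $L^{2}$-orthogonal complement of $\mathcal{K}$ inside $L^{2}\cap\mathcal{B}$, so one can solve on $\mathcal{K}^{\perp}$ and handle the $\mathcal{K}$-component with $\Pi_{\mathcal{K}}$. Hypothesis (4) yields the analogous statement at the $H^{k}/H^{k-2}$ scale.

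Since $\mathcal{F}$ is analytic on $\mathcal{U}$, so is $\nabla\mathcal{F}$, and hence $\mathcal{N}$ is analytic from an $H^{k}$-neighborhood of $0$ into $H^{k-2}\cap\mathcal{B}$ with Fr\'echet derivative at $0$ given by the isomorphism above. The analytic inverse function theorem in Banach spaces then produces an analytic local inverse $\Phi:\mathcal{O}\to H^{k}\cap\mathcal{A}$ with $\Phi(0)=0$, which immediately gives (i), (iii), and the second Lipschitz estimate in (ii) at the $H^{k}/H^{k-2}$ scale.

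The hard part will be the $H^{2}/L^{2}$ Lipschitz estimate in (ii), since $\mathcal{N}$ is only defined on an $H^{k}$-neighborhood of $0$ and a direct application of the inverse function theorem at the weaker scale is not available. Here I would exploit hypothesis (2): because $d\mathcal{N}(w)$ depends continuously on $w$ as a map $H^{2}\cap\mathcal{A}\to L^{2}\cap\mathcal{B}$, after shrinking $\mathcal{O}$ one can arrange that $d\mathcal{N}(\Phi(y)+t(\Phi(x)-\Phi(y)))$ stays within a small operator-norm neighborhood of $L+\Pi_{\mathcal{K}}$ for $x,y\in\mathcal{O}$ and $t\in[0,1]$, hence remains an isomorphism $H^{2}\cap\mathcal{A}\to L^{2}\cap\mathcal{B}$ with uniformly bounded inverse. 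Writing
\[
x-y=\int_{0}^{1}d\mathcal{N}\bigl(\Phi(y)+t(\Phi(x)-\Phi(y))\bigr)(\Phi(x)-\Phi(y))\,dt
\]
and inverting pointwise under the integral yields the desired $\|\Phi(x)-\Phi(y)\|_{H^{2}}\leq C\|x-y\|_{L^{2}}$. The main obstacle is precisely this scale mismatch: the inverse function theorem lives at the $H^{k}/H^{k-2}$ level, while the stronger Lipschitz bound must be proved at the weaker $H^{2}/L^{2}$ level, and hypothesis (2) is tailored to bridge this gap.
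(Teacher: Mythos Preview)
Your proposal is correct and follows essentially the same route as the paper: show that $D_0\mathcal{N}=L+\Pi_{\mathcal{K}}$ is an isomorphism at both the $H^k/H^{k-2}$ and $H^2/L^2$ scales, invoke the (analytic) inverse function theorem to produce $\Phi$ and obtain (i), (iii) and the $H^k$-Lipschitz bound, and then use a mean value identity together with hypothesis~(2) to upgrade to the $H^2/L^2$ Lipschitz bound. The only cosmetic differences are that the paper deduces the isomorphism from ``Fredholm of index zero plus injective'' rather than exhibiting surjectivity directly, and that the paper runs the mean value formula on the target side, writing $\Phi(y)-\Phi(x)=\int_0^1 D\Phi(ty+(1-t)x)\,dt\,(y-x)$ and bounding $\|D\Phi(w)\|_{L^2\to H^2}=\|(D\mathcal{N}(\Phi(w)))^{-1}\|$ directly, whereas you run it on the source side. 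Your phrase ``inverting pointwise under the integral'' is slightly imprecise---what you actually need is that the \emph{integrated} operator $\int_0^1 d\mathcal{N}(\cdot)\,dt$ is close to $L+\Pi_{\mathcal{K}}$ in the $H^2\to L^2$ operator norm and hence invertible---but this follows immediately from the uniform closeness of each integrand that you already arranged, so the argument goes through.
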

    \begin{proof}
        Note that $\mathcal{N} := \nabla \mathcal{F} + \Pi_{\mathcal{K}}: H^k\left(M\right) \cap \mathcal{A} \rightarrow H^{k-2}\left(M\right)$ is a $C^1$ map by Assumption $(1)$. Furthermore, the Fr\'echet derivative of $\mathcal{N}$ at $0$ is
        \begin{equation*}
            D_0 \mathcal{N} = L + \Pi_{\mathcal{K}}.
        \end{equation*}
       We now want to show $D_0 \mathcal{N}$ is a local isomorphism so that we can use the implicit function theorem to define $\Phi$. We know that $L$ is a Fredholm operator whose index is $0$ due to symmetry and Assumption $(4)$. Since $\mathrm{dim}\left(\mathcal{K}\right) < \infty$, $\Pi_{\mathcal{K}}$ is a compact operator. This implies $D_0\mathcal{N}$ is Fredholm. If we can further show $D_0\mathcal{N}$ is injective then we are done.\\
        Now to show injectivity. Assume for some $x$ that $L\left(x\right) + \Pi_{\mathcal{K}}\left(x\right) = 0$. After projecting onto $\mathcal{K}^\perp$ we find that $L\left(x\right) = 0$ hence $x \in \mathcal{K}$ and $\Pi_{\mathcal{K}}\left(x\right) = 0$. This implies $x = 0$ as desired.\\
        Therefore, an application of the implicit function theorem shows that $D_0 \mathcal{N}$ is an isomorphism from $H^k\left(M\right) \cap \mathcal{A}$ onto $H^{k-2}\left(M\right) \cap \mathcal{B}$. Also, its inverse $\left(D_0\mathcal{N}\right)^{-1}$ is a bounded linear mapping from $H^{k-2}\left(M\right) \cap \mathcal{B}$ to $H^k\left(M\right) \cap \mathcal{A}$. Applying the inverse function theorem gives the existence of $\mathcal{O}$, a neighborhood of $0 \in H^{k-2}\left(M\right) \cap \mathcal{B}$, and a $C^1$ map $\Phi: U \rightarrow H^k\left(M\right) \cap \mathcal{A}$ with the following properties:
        \begin{itemize}
            \item $\Phi\left(0\right) = 0$,
            \item $\Phi \circ \mathcal{N}\left(z\right) = z$ and $\mathcal{N} \circ \Phi\left(x\right) = x$ for $x,y \in \mathcal{O}$ and $z \in H^k\left(M\right) \cap \mathcal{A}$ small enough (thereby establishing (i)),
            \item a continuous Fr\'echet derivative and $D_y \Phi = \left(D_{\Phi\left(y\right)}\mathcal{N}\right)^{-1}$.
        \end{itemize}
    

        This third point allows us to use the integral mean value theorem for Banach spaces to prove (ii):
        for sufficiently small $x,y \in \mathcal{O}$ we have
        \begin{equation*}
            \Phi\left(y\right) - \Phi\left(x\right) = \int^1_0 \left(D\Phi\right)\left(ty + \left(1-t\right)x\right)dt \left(y-x\right)
        \end{equation*}
        and then, using $\norm{Tx}_Y \leq \norm{T}_{\mathrm{op}}\norm{x}_X$ for an operator $T: X \rightarrow Y$ and $\norm{\cdot}_{\mathrm{op}}$ the operator norm,
        \begin{equation*}
            \norm{\Phi\left(y\right) - \Phi\left(x\right)}_{H^k\left(M\right)} \leq \norm{\int^1_0 D\Phi\left(ty + \left(1-t\right)x\right)dt}_{\mathrm{op}}\norm{y-x}_{H^{k-2}\left(M\right)} \leq C\norm{y-x}_{H^{k-2}\left(M\right)}.
        \end{equation*}
       The last line follows from $x,y \in \mathcal{O}$ being sufficiently small and $\Phi$ having a continuous Fr\'echet derivative.\\
        The other Lipschitz estimate, $\norm{\Phi\left(y\right) - \Phi\left(x\right)}_{H^k\left(M\right)} \leq C\norm{x-y}_{L^2\left(M\right)}$, follows from a similar argument. In particular, by Assumption (2), $\nabla \mathcal{F}$ has a continuous Fr\'echet derivative when viewed as a map from $H^2$ to $L^2$. Also, Assumption (3) and the $L^2$-boundedness of $\Pi_{\mathcal{K}}$ tells us, using analogous logic to before, that $D_0 \mathcal{N}$ is a local isomorphism and its inverse is a linear map.\\
        Finally, (iii) (the analyticity of $\mathcal{F} \circ \Phi$) follows from the analyticity of $\mathcal{F}$ and that of $\Phi$, which is assured by the implicit function theorem (see page $1081$ in \cite{whitt_implicit}).
    \end{proof}
    For the next lemma, recall the definition of $\nabla \mathcal{F}$, the $L^2$-gradient of a functional\\ $\mathcal{F}$: $D_y \mathcal{F}\left(v\right) = \left<\nabla \mathcal{F},v\right>_{L^2}$ for $y \in \mathcal{O}$.
    \begin{lemma}\label{comp_estimate}
        There is a constant $C > 0$ such that, for sufficiently small $x \in H^k\left(M\right) \cap \mathcal{A}$, we have
        \begin{equation*}
            \norm{\nabla \mathcal{G}\left(\Pi_{\mathcal{K}}\left(x\right)\right)}_{L^2\left(M\right)} \leq C \norm{\nabla \mathcal{F}\left(x\right)}_{L^2\left(M\right)},
        \end{equation*}
            where $\mathcal{G} := \mathcal{F} \circ \Phi$. Moreover, if $y_t := \Pi_{\mathcal{K}}\left(x\right) + t\nabla \mathcal{F}\left(x\right)$ for $t \in \left[0,1\right]$, then
            \begin{equation*}
            \norm{\nabla \mathcal{G}\left(y_t\right)}_{L^2\left(M\right)} \leq C \norm{\nabla \mathcal{F}\left(x\right)}_{L^2\left(M\right)}.
        \end{equation*}
    \end{lemma}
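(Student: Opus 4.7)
The plan is to compute $\nabla\mathcal{G}$ via the chain rule, reducing the estimate for $\nabla\mathcal{G}(y_t)$ to one for $\nabla\mathcal{F}(\Phi(y_t))$, and then to control the latter by comparing $\Phi(y_t)$ with $x$ using the identity $\Phi\circ\mathcal{N}=\mathrm{id}$ from Lemma \ref{inverse_map_lemma}(i). Since the first claim is just the second at $t=0$, it suffices to prove the second.

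First, differentiating $\mathcal{G}=\mathcal{F}\circ\Phi$ yields the identity $\nabla\mathcal{G}(w)=(D_w\Phi)^{*}\nabla\mathcal{F}(\Phi(w))$, with the adjoint taken in $L^{2}$. The Lipschitz estimate from Lemma \ref{inverse_map_lemma}(ii) at the level $L^{2}\to H^{2}$ tells me that $D_w\Phi$ extends to a bounded operator on $L^{2}$ whose norm is uniformly controlled for $w$ near $0$; the same uniform bound then holds for its $L^{2}$-adjoint. This reduces the problem to bounding $\|\nabla\mathcal{F}(\Phi(y_t))\|_{L^{2}}$ in terms of $\|\nabla\mathcal{F}(x)\|_{L^{2}}$.

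Second, I would exploit the identity $x=\Phi(\mathcal{N}(x))$ with $\mathcal{N}(x)=\nabla\mathcal{F}(x)+\Pi_{\mathcal{K}}(x)$, so that $y_t-\mathcal{N}(x)=(t-1)\nabla\mathcal{F}(x)$. A second application of the $L^{2}\to H^{2}$ Lipschitz bound for $\Phi$ yields $\|\Phi(y_t)-x\|_{H^{2}}\leq C|t-1|\|\nabla\mathcal{F}(x)\|_{L^{2}}$, and chaining this with the Lipschitz continuity of $\nabla\mathcal{F}:H^{2}\to L^{2}$ from hypothesis (1) of Theorem \ref{loj_general} produces $\|\nabla\mathcal{F}(\Phi(y_t))\|_{L^{2}}\leq(1+C)\|\nabla\mathcal{F}(x)\|_{L^{2}}$, which combined with the uniform bound on the adjoint closes the estimate.

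The main technical obstacle will be ensuring that $y_t\in\mathcal{O}$ for every $t\in[0,1]$ so that $\Phi(y_t)$ is well-defined. This requires $y_t\in\mathcal{B}$, which follows because $\mathcal{K}\subset\mathcal{B}$ (a consequence of $L$ being symmetric and Fredholm from $L^{2}\cap\mathcal{A}$ to $L^{2}\cap\mathcal{B}$, so $\mathcal{K}$ coincides with the $L^{2}$-cokernel which lies in $\mathcal{B}$) and $\nabla\mathcal{F}$ maps into $\mathcal{B}$; and it requires $y_t$ to be small in $H^{k-2}$, which follows from $\nabla\mathcal{F}(0)=0$, continuity of $\nabla\mathcal{F}$, and $L^{2}$-boundedness of $\Pi_{\mathcal{K}}$ together with the finite-dimensionality of $\mathcal{K}$ (so that all norms on $\mathcal{K}$ are equivalent). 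A secondary subtlety is that the chain-rule identity must be interpreted correctly since $\nabla\mathcal{G}$ naturally lives in $\mathcal{B}$ rather than all of $L^{2}$, but this causes no difficulty because $(D_w\Phi)^{*}\nabla\mathcal{F}(\Phi(w))$ pairs correctly against variations in $\mathcal{B}$ and the $L^{2}$-norm is unaffected by projecting onto $\mathcal{B}$.
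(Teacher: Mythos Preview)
Your proposal is correct and follows essentially the same route as the paper: chain rule to reduce $\nabla\mathcal{G}(y_t)$ to $\nabla\mathcal{F}(\Phi(y_t))$, then the identity $x=\Phi(\mathcal{N}(x))$ together with the $L^2\!\to\!H^2$ Lipschitz bound on $\Phi$ and the $H^2\!\to\!L^2$ Lipschitz bound on $\nabla\mathcal{F}$ to compare $\nabla\mathcal{F}(\Phi(y_t))$ with $\nabla\mathcal{F}(x)$. Your observation that the first inequality is the $t=0$ case of the second is a small streamlining over the paper, which proves the two claims separately; and you address the $y_t\in\mathcal{O}$ issue more explicitly than the paper does.
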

        \begin{proof}
        For sufficiently small $y \in \mathcal{O}$, the neighborhood of $0$ in $H^k\left(M\right) \cap \mathcal{B}$ from Lemma \ref{inverse_map_lemma}, we have, by the chain rule, $D_y \mathcal{G}\left(v\right) = D_{\Phi\left(y\right)}\mathcal{F} \circ D_y \Phi\left(v\right)$ for $v \in L^2\left(M\right)$. Therefore
            \begin{align*}
            \abs{D_y \mathcal{G}\left(v\right)} &= \abs{\left<\nabla \mathcal{G},v\right>_{L^2}}\\
            &\leq \norm{\nabla \mathcal{F}\left(\Phi\left(y\right)\right)}_{L^2\left(M\right)} \norm{D_y\Phi\left(v\right)}_{L^2\left(M\right)}\\
            &\leq C \norm{\nabla \mathcal{F}\left(\Phi\left(y\right)\right)}_{L^2\left(M\right)} \norm{v}_{L^2\left(M\right)},
        \end{align*}
            where the final line follows from Item (2) of Lemma \ref{inverse_map_lemma}. Dividing through by $\norm{v}_{L^2\left(M\right)}$ and using the definition of $\nabla \mathcal{G}$ yields
           \begin{equation*}
            \norm{\nabla \mathcal{G}\left(y\right)}_{L^2\left(M\right)} \leq C \norm{\nabla \mathcal{F}\left(\Phi\left(y\right)\right)}_{L^2\left(M\right)}.
        \end{equation*}
            In particular, for small enough $x \in H^2\left(M\right) \cap \mathcal{A}$, we have
            \begin{equation*}
            \norm{\nabla \mathcal{G}\left(\Pi_{\mathcal{K}}\left(x\right)\right)}_{L^2\left(M\right)} \leq C \norm{\nabla \mathcal{F}\left(\Phi\left(\Pi_{\mathcal{K}}\left(x\right)\right)\right)}_{L^2\left(M\right)}.
        \end{equation*}
            Furthermore, since $x = \Phi \circ \mathcal{N}\left(x\right) = \Phi \circ \left(\Pi_{\mathcal{K}}\left(x\right) + \nabla \mathcal{F}\left(x\right)\right)$ we have
            \begin{align*}
            &\norm{\nabla \mathcal{F}\left(\Phi\left(\Pi_{\mathcal{K}}\left(x\right)\right)\right) - \nabla \mathcal{F}\left(x\right)}_{L^2\left(M\right)}\\
            = &\norm{\nabla \mathcal{F}\left(\Phi\left(\Pi_{\mathcal{K}}\left(x\right)\right)\right) - \nabla \mathcal{F}\left( \Phi \left(\Pi_{\mathcal{K}}\left(x\right) + \nabla \mathcal{F}\left(x\right)\right)\right)}_{L^2\left(M\right)}\\
            \leq &C \norm{\Phi\left( \Pi_{\mathcal{K}}\left(x\right)\right) - \Phi\left(\Pi_{\mathcal{K}}\left(x\right) + \nabla \mathcal{F}\left(x\right)\right)}_{H^2\left(M\right)}\\
            \leq &C \norm{\nabla \mathcal{F}\left(x\right)}_{L^2\left(M\right)}.
        \end{align*}
            Here the third line follows from Assumption (1) while the final line is due to Lemma \ref{inverse_map_lemma}. The triangle inequality then implies
            \begin{align*}
            \norm{\nabla \mathcal{G}\left(\Pi_{\mathcal{K}}\left(x\right)\right)}_{L^2\left(M\right)} &\leq C\norm{\nabla \mathcal{F}\left(\Phi\left(\Pi_{\mathcal{K}}\left(x\right)\right)\right)}_{L^2\left(M\right)}\\
            &\leq C\norm{\nabla \mathcal{F}\left(x\right)}_{L^2\left(M\right)} + C\norm{\nabla \mathcal{F}\left(\Phi\left(\Pi_{\mathcal{K}}\left(x\right)\right)\right) - \nabla \mathcal{F}\left(x\right)}_{L^2\left(M\right)}\\
            &\leq C\norm{\nabla \mathcal{F}\left(x\right)}_{L^2\left(M\right)}.
        \end{align*}
            For the second part of the lemma, note that $\mathcal{G} = \mathcal{F} \circ \Phi$ and $\Phi \circ \mathcal{N} = \mathrm{id}$ which implies $\mathcal{F} = \mathcal{G}\circ \mathcal{N}$. Therefore
            \begin{equation*}
            D_{\Phi\left(y_t\right)} \mathcal{F} = D_{y_t} \mathcal{G} \circ D_{\Phi\left(y_t\right)} \mathcal{N}.
        \end{equation*}
            Lemma \ref{inverse_map_lemma} tells us $D_{\Phi\left(y_t\right)} \mathcal{N}$ is invertible with bounded inverse. We can thus write\\ $D_{y_t} \mathcal{G} = D_{\Phi\left(y_t\right)}\mathcal{F} \circ \left(D_{\Phi\left(y_t\right)} \mathcal{N}\right)^{-1}$. Using the definition of $L^2$-gradients again, as well as the Cauchy--Schwarz inequality yields
        \begin{equation*}
            \norm{\nabla \mathcal{G}\left(y_t\right)}_{L^2\left(M\right)} \leq C \norm{\nabla \mathcal{F}\left(\Phi\left(y_t\right)\right)}_{L^2\left(M\right)}.
        \end{equation*}
            We now want to refine this estimate so that it holds with $x$ in place of $\Phi\left(y_t\right)$ on the right hand side. For this, note that $y_1 = \Pi_{\mathcal{K}}\left(x\right) + \nabla \mathcal{F}\left(x\right) = \mathcal{N}\left(x\right)$, which means $\Phi\left(y_1\right) = \Phi \circ \mathcal{N}\left(x\right) = x$. This lets us compute as follows:
            \begin{align*}
            \norm{\nabla \mathcal{F}\left(\Phi\left(y_t\right)\right) - \nabla \mathcal{F}\left(x\right)}_{L^2\left(M\right)} &\leq C\norm{\Phi\left(y_t\right) - x}_{H^2\left(M\right)}\\
            &= C\norm{\Phi\left(y_t\right) - \Phi\left(y_1\right)}_{H^2\left(M\right)}\\
            &\leq C\norm{y_t - y_1}_{L^2\left(M\right)}\\
            &= C \norm{t \nabla \mathcal{F}\left(x\right) - \nabla \mathcal{F}\left(x\right)}_{L^2\left(M\right)}\\
            &= C\left(1-t\right)\norm{\nabla \mathcal{F}\left(x\right)}_{L^2\left(M\right)}.
        \end{align*}
        Here the first inequality follows from Assumption (1), the second and fourth lines are due to the definition of $y_t$, and the third comes from Lemma \ref{inverse_map_lemma}. Using the triangle inequality implies the desired result:
        \begin{align*}
            \norm{\nabla \mathcal{G}\left(y_t\right)}_{L^2\left(M\right)} &\leq C \norm{\nabla \mathcal{F}\left(\Phi\left(y_t\right)\right)}_{L^2\left(M\right)}\\
            &\leq C \norm{\nabla \mathcal{F}\left(\Phi\left(y_t\right)\right) - \nabla \mathcal{F}\left(x\right)}_{L^2\left(M\right)} + \norm{\nabla \mathcal{F}\left(x\right)}_{L^2\left(M\right)}\\
            &\leq C\norm{\nabla \mathcal{F}\left(x\right)}_{L^2\left(M\right)}.\qedhere
        \end{align*}
    \end{proof}
    \begin{lemma}\label{almost_loj_estim}
        There is a constant $C > 0$ such that, for sufficiently small $x \in H^k\left(M\right) \cap \mathcal{A}$, we have
        \begin{equation*}
            \abs{\mathcal{F}\left(x\right) - \mathcal{G}\left(\Pi_{\mathcal{K}}\left(x\right)\right)} \leq C\norm{\nabla \mathcal{F}\left(x\right)}^2_{L^2\left(M\right)}.
        \end{equation*}
    \end{lemma}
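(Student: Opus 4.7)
The plan is to realize the difference $\mathcal{F}(x) - \mathcal{G}(\Pi_{\mathcal{K}}(x))$ as the increment of $\mathcal{G}$ along the segment in $L^2$ joining $\Pi_{\mathcal{K}}(x)$ to $\mathcal{N}(x) = \Pi_{\mathcal{K}}(x) + \nabla \mathcal{F}(x)$, and then use the bound on $\|\nabla \mathcal{G}(y_t)\|_{L^2}$ supplied by the second part of Lemma \ref{comp_estimate} to get a quadratic estimate in $\|\nabla \mathcal{F}(x)\|_{L^2}$.

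First I would rewrite the left-hand side using Lemma \ref{inverse_map_lemma}(i): since $\Phi \circ \mathcal{N} = \mathrm{id}$ on a sufficiently small neighborhood of $0$, we have $x = \Phi(\mathcal{N}(x))$ and hence
\begin{equation*}
    \mathcal{F}(x) = \mathcal{F}(\Phi(\mathcal{N}(x))) = \mathcal{G}(\mathcal{N}(x)) = \mathcal{G}\bigl(\Pi_{\mathcal{K}}(x) + \nabla \mathcal{F}(x)\bigr).
\end{equation*}
In particular, with $y_t := \Pi_{\mathcal{K}}(x) + t\,\nabla \mathcal{F}(x)$ as in Lemma \ref{comp_estimate}, we have $y_0 = \Pi_{\mathcal{K}}(x)$ and $y_1 = \mathcal{N}(x)$, and so
\begin{equation*}
    \mathcal{F}(x) - \mathcal{G}(\Pi_{\mathcal{K}}(x)) = \mathcal{G}(y_1) - \mathcal{G}(y_0).
\end{equation*}

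Next I would express this difference as an integral along the path. Since $\mathcal{G} = \mathcal{F}\circ \Phi$ is $C^1$ on a neighborhood of $0$ by the regularity of $\Phi$ from Lemma \ref{inverse_map_lemma} and Assumption (1), the fundamental theorem of calculus in Banach spaces gives
\begin{equation*}
    \mathcal{G}(y_1) - \mathcal{G}(y_0) = \int_0^1 D_{y_t} \mathcal{G}\bigl(\nabla \mathcal{F}(x)\bigr)\,dt = \int_0^1 \bigl\langle \nabla \mathcal{G}(y_t),\, \nabla \mathcal{F}(x) \bigr\rangle_{L^2} \,dt.
\end{equation*}
Here $\dot y_t = \nabla \mathcal{F}(x)$ is independent of $t$, which is the whole point of working with a linear segment in the $L^2$-factor.

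Finally I would apply Cauchy--Schwarz and then Lemma \ref{comp_estimate}. For $x$ sufficiently small, the segment $\{y_t\}_{t\in[0,1]}$ stays in the neighborhood $\mathcal{O}\subset H^{k-2}(M)\cap \mathcal{B}$ (because $\nabla \mathcal{F}(x)$ is small in $L^2$ by Assumption (1) and $\nabla \mathcal{F}(0)=0$, and $\Pi_{\mathcal{K}}$ is bounded), so Lemma \ref{comp_estimate} yields $\|\nabla \mathcal{G}(y_t)\|_{L^2} \leq C\,\|\nabla \mathcal{F}(x)\|_{L^2}$ uniformly in $t\in[0,1]$. Combining these ingredients gives
\begin{equation*}
    \bigl|\mathcal{F}(x) - \mathcal{G}(\Pi_{\mathcal{K}}(x))\bigr| \leq \int_0^1 \|\nabla \mathcal{G}(y_t)\|_{L^2}\, \|\nabla \mathcal{F}(x)\|_{L^2}\, dt \leq C\, \|\nabla \mathcal{F}(x)\|_{L^2}^2,
\end{equation*}
which is the claim. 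There is no substantial obstacle here; the only minor technical point is checking that $y_t\in\mathcal{O}$ throughout the segment, but this is immediate from the continuity of $\nabla\mathcal{F}$ at $0$ and the choice of a sufficiently small neighborhood of $0$ for $x$.
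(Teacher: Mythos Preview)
Your proof is correct and follows essentially the same approach as the paper: define the linear segment $y_t = \Pi_{\mathcal{K}}(x) + t\,\nabla\mathcal{F}(x)$, use $\Phi\circ\mathcal{N}=\mathrm{id}$ to identify $\mathcal{F}(x)=\mathcal{G}(y_1)$, apply the fundamental theorem of calculus to write the difference as $\int_0^1\langle\nabla\mathcal{G}(y_t),\nabla\mathcal{F}(x)\rangle_{L^2}\,dt$, and then conclude via Cauchy--Schwarz and the second estimate of Lemma~\ref{comp_estimate}. Your added remark about why $y_t$ stays in $\mathcal{O}$ is a helpful clarification not made explicit in the paper.
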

    \begin{proof}
        For all $t \in \left[0,1\right]$ define $y_t := \Pi_{\mathcal{K}}\left(x\right) + t \nabla \mathcal{F}\left(x\right)$. Then $\Phi\left(y_1\right) = \Phi \circ \mathcal{N}\left(x\right) = x$,\\ $y_0 = \Pi_{\mathcal{K}}\left(x\right)$, and $\frac{d}{dt}y_t = \nabla \mathcal{F}\left(x\right)$. The fundamental theorem of calculus gives
        \begin{align*}
            \int^1_0 \left<\nabla \mathcal{G}\left(y_t\right), \nabla \mathcal{F}\left(x\right)\right>_{L^2} dt &= \int^1_0 \frac{d}{dt} \mathcal{G}\left(y_t\right) dt\\
            &= \mathcal{G}\left(y_1\right) - \mathcal{G}\left(y_0\right)\\
            &= \mathcal{F}\left(\Phi\left(y_1\right)\right) - \mathcal{G}\left(y_0\right)\\
            &= \mathcal{F}\left(x\right) - \mathcal{G}\left(\Pi_{\mathcal{K}}\left(x\right)\right).
        \end{align*}
        The Cauchy--Schwarz inequality then tells us that
        \begin{align*}
            \abs{\mathcal{F}\left(x\right) - \mathcal{G}\left(\Pi_{\mathcal{K}}\left(x\right)\right)} &\leq \int^1_0 \abs{\left<\nabla \mathcal{G}\left(y_t\right), \nabla \mathcal{F}\left(x\right)\right>_{L^2}} dt\\
            &\leq \norm{\nabla \mathcal{G}\left(y_t\right)}_{L^2\left(M\right)} \norm{\nabla \mathcal{F}\left(x\right)}_{L^2\left(M\right)}\\
            &\leq C\norm{\nabla \mathcal{F}\left(x\right)}^2_{L^2\left(M\right)},
        \end{align*}
        where the final line is due to Lemma \ref{comp_estimate}.
    \end{proof}
    Now we can prove Theorem \ref{loj_general}:
    \begin{proof}
        Note that $\mathcal{G}_{\mathcal{K}} := \left.\mathcal{G}\right|_{\mathcal{K}}$, the restriction of $\mathcal{G}$ to the (finite dimensional) kernel $\mathcal{K}$, is analytic. Then, for small enough $x \in H^k\left(M\right) \cap \mathcal{A}$, we can use Lemma \ref{comp_estimate}, the finite dimensional \L ojasiewicz--Simon inequality from \cite{finite_loj}, and $\norm{\nabla \mathcal{G}_{\mathcal{K}}\left(y\right)}_{L^2\left(M\right)} \leq \norm{\nabla \mathcal{G}\left(y\right)}_{L^2\left(M\right)}$ for $y \in \mathcal{K}$, to find that, for $\theta \in \left(0,1\right]$,
        \begin{align*}
            C^2\norm{\nabla \mathcal{F}\left(x\right)}^2_{L^2\left(M\right)} &\geq C \norm{\nabla \mathcal{G}\left(\Pi_{\mathcal{K}}\left(x\right)\right)}^2_{L^2\left(M\right)}\\
            &\geq \norm{\nabla \mathcal{G}_{\mathcal{K}}\left(\Pi_{\mathcal{K}}\left(x\right)\right)}^2_{L^2\left(M\right)}\\
            &\geq \abs{\mathcal{G}_{\mathcal{K}}\left(\Pi_{\mathcal{K}}\left(x\right)\right) - \mathcal{G}_{\mathcal{K}}\left(0\right)}^{2-\theta}\\
            &= \abs{\mathcal{G}_{\mathcal{K}}\left(\Pi_{\mathcal{K}}\left(x\right)\right) - \mathcal{F}\left(0\right)}^{2-\theta}.
        \end{align*}
        Using Lemma \ref{almost_loj_estim} and the triangle inequality implies that
        \begin{align*}
            \abs{\mathcal{F}\left(x\right) - \mathcal{F}\left(0\right)}^{2-\theta} &\leq C\left(\abs{\mathcal{F}\left(0\right) - \mathcal{G}\left(\Pi_{\mathcal{K}}\left(x\right)\right)}^{2-\theta} + \abs{\mathcal{F}\left(x\right) - \mathcal{G}\left(\Pi_{\mathcal{K}}\left(x\right)\right)}^{2-\theta}\right)\\
            &\leq C\left(\norm{\nabla \mathcal{F}\left(x\right)}^{2\left(2-\theta\right)}_{L^2\left(M\right)} + \norm{\nabla \mathcal{F}\left(x\right)}^2_{L^2\left(M\right)}\right)\\
            &\leq C \norm{\nabla \mathcal{F}\left(x\right)}^2_{L^2\left(M\right)}
        \end{align*}
        as desired. Note that to get the final inequality we used the continuity of $\nabla \mathcal{F}$ and $a^{2\left(2-\theta\right)} \leq a^2$ for $0 < a \leq 1$.
    \end{proof}
    In order to ensure Assumption (3) of Theorem \ref{loj_general} is satisfied, we need to understand the image of $\Lap_E\left(H^2\left(M\right) \cap \mathcal{A}\right)$, where $\mathcal{A} := \ker\left(\div_g\right)$. If we had higher regularity, then we would be done by the Fredholmness of $\Lap_E$ and an identity for its commutation with the divergence operator (see, for example, pages $28-29$ in \cite{lichnerowicz_commutators}). On the other hand, when looking at $\Lap_E\left(H^2\left(M\right) \cap \mathcal{A}\right)$, while one would expect that the image is $L^2\left(M\right) \cap \mathcal{A}$, this requires the existence of one derivative, so we need to interpret it in a suitable fashion.
    \begin{lemma}\label{image_lemma}
        Let $\mathcal{K} := \ker_{L^2}\left(\Lap_E\right)$ and $\mathcal{A},\Pi_{\mathcal{K}}$ be defined as above. Then the image of\\ $\left(\Lap_E + \Pi_{\mathcal{K}}\right)\left(H^2\left(M\right) \cap \mathcal{A}\right)$ is $L^2\left(M\right) \cap \div^\ast_g\left(C^\infty_c\left(TM\right)\right)^\perp$.
    \end{lemma}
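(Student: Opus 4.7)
The plan is to establish both inclusions by exploiting a commutation identity between $\Lap_E$ and $\div_{\wh{g}}$ on the Einstein background. Combining the classical identity $\div\circ\Lap_L=\Lap_H\circ\div$ valid on Einstein manifolds (where $\Lap_L$ is the Lichnerowicz Laplacian and $\Lap_H$ the Hodge Laplacian on $1$-forms) with $\Lap_E=\Lap_L+2(n-1)$ and the Weitzenbock formula $\Lap_H=\Lap-(n-1)$ on $1$-forms of a PE manifold, one obtains
\begin{equation*}
    \div_{\wh{g}}(\Lap_E h)=P(\div_{\wh{g}} h),\qquad P:=\Lap+(n-1)\text{ on }1\text{-forms}.
\end{equation*}
By Lemma \ref{isomorphism_result} (or directly from the positivity $\langle P\omega,\omega\rangle_{L^2}=\norm{\nabla\omega}_{L^2}^2+(n-1)\norm{\omega}_{L^2}^2$), the operator $P:H^2(T^*M)\to L^2(T^*M)$ is an isomorphism, and in particular $P$ is injective on $L^2$ in the distributional sense. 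Verifying this commutation together with the resulting injectivity of $P$ is the main analytical input of the proof.

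For the inclusion ``$\subseteq$'', I would first observe that $\mathcal{K}\subseteq\mathcal{A}$: any $\phi\in\mathcal{K}$ lies in $H^2$ by elliptic regularity, so applying $\div_{\wh{g}}$ to $\Lap_E\phi=0$ yields $P(\div_{\wh{g}}\phi)=0$, whence $\div_{\wh{g}}\phi=0$ by injectivity of $P$. Next, for $h\in H^2\cap\mathcal{A}$ and any $Y\in C^\infty_c(TM)$, I would integrate by parts and use the dual commutation $\Lap_E\div_{\wh{g}}^{*}=\div_{\wh{g}}^{*}P$ to compute
\begin{equation*}
    \langle(\Lap_E+\Pi_{\mathcal{K}})h,\div_{\wh{g}}^{*}Y\rangle_{L^2}=\langle\div_{\wh{g}}h,PY\rangle_{L^2}+\langle\div_{\wh{g}}\Pi_{\mathcal{K}}h,Y\rangle_{L^2}=0,
\end{equation*}
giving the desired orthogonality to $\div_{\wh{g}}^{*}(C^\infty_c(TM))$.

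For the inclusion ``$\supseteq$'', I plan to first establish that $\Lap_E+\Pi_{\mathcal{K}}:H^2\to L^2$ is an isomorphism. It is Fredholm of index zero by Remark \ref{fredholm_remark} combined with compactness of the finite-rank projection $\Pi_{\mathcal{K}}$; injectivity follows by projecting the equation $(\Lap_E+\Pi_{\mathcal{K}})h=0$ onto $\mathcal{K}$ and using self-adjointness of $\Lap_E$ (which forces $\Lap_E h\perp\mathcal{K}$ and hence $\Pi_{\mathcal{K}}h=\Lap_E h=0$). Given $f\in L^2\cap\div_{\wh{g}}^{*}(C^\infty_c(TM))^\perp$, I would solve $(\Lap_E+\Pi_{\mathcal{K}})h=f$ uniquely in $H^2$; the same pairing computation then produces $\langle\div_{\wh{g}}h,PY\rangle_{L^2}=0$ for every $Y\in C^\infty_c(TM)$. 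By density of $C^\infty_c(TM)$ in $H^2(T^*M)$ together with the isomorphism property of $P$, this forces $\div_{\wh{g}}h=0$, so $h\in H^2\cap\mathcal{A}$, completing the proof.
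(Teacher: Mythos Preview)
Your argument for the inclusion ``$\subseteq$'' is essentially the paper's proof, repackaged: you encode the key commutation as $\div_{\wh g}\Lap_E=P\,\div_{\wh g}$ with $P=\Lap+(n-1)$ on $1$-forms, whereas the paper pairs $(\Lap_E+\Pi_{\mathcal K})h$ against $\div^*X$, rewrites $\Lap_E=\Lap_L+2(n-1)$, and invokes the classical identity $\Lap_L\div^*=\div^*\Lap_H$ from \cite{lichnerowicz_commutators} to reduce to $\langle\div h,\Lap_HX\rangle=0$. These are the same computation, since $\Lap_H+2(n-1)=\Lap+(n-1)=P$ on $1$-forms of a PE manifold. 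Your explicit verification that $\mathcal K\subseteq\mathcal A$ (via injectivity of $P$) is also used by the paper, though there it is asserted without proof.

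The paper's proof, however, stops after ``$\subseteq$'' and does not address the reverse inclusion, despite the lemma asserting equality. Your argument for ``$\supseteq$'' --- showing $\Lap_E+\Pi_{\mathcal K}:H^2\to L^2$ is an isomorphism (Fredholm of index zero plus trivial kernel), solving $(\Lap_E+\Pi_{\mathcal K})h=f$ on all of $H^2$, and then deducing $\div_{\wh g} h=0$ from $\langle\div_{\wh g} h,PY\rangle=0$ for all $Y\in C^\infty_c$ --- genuinely completes the proof and is needed for the Fredholm hypothesis in Theorem \ref{loj_general}. One small caveat: invoking Lemma \ref{isomorphism_result} for $P$ on $1$-forms is not directly justified, since that lemma only covers $c>\lambda_2$ and $\lambda_2$ for $1$-forms is not computed. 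Your parenthetical positivity argument is the right fix: $\langle\div_{\wh g} h,PY\rangle=0$ for all compactly supported $Y$ means $P(\div_{\wh g} h)=0$ distributionally, and since $\div_{\wh g} h\in H^1\subset L^2$, elliptic regularity on the AH manifold bootstraps $\div_{\wh g} h$ into $H^2$, where $\langle P\omega,\omega\rangle=\norm{\nabla\omega}_{L^2}^2+(n-1)\norm{\omega}_{L^2}^2>0$ forces $\div_{\wh g} h=0$.
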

    \begin{proof}
        Consider $v$ in the image of $\left(\Lap_E + \Pi_{\mathcal{K}}\right)\left(H^2\left(M\right) \cap \mathcal{A}\right)$. Then, for some $H^3$-vector field $X$, we have
        \begin{align*}
            \left<v, \div^\ast_g X\right>_{L^2\left(M\right)} &= \left<\Lap_E h + \Pi_{\mathcal{K}}h, \div^\ast_g X\right>_{L^2\left(M\right)}\\
            &= \left<\Lap_L h + 2\left(n-1\right)h + \Pi_{\mathcal{K}} h, \div^\ast_g X\right>_{L^2\left(M\right)}\\
            &= \left<h, \Lap_L \div^\ast_g X\right>_{L^2\left(M\right)} - \left<\div_g\left(2\left(n-1\right) h + \Pi_{\mathcal{K}}h\right), X\right>_{L^2\left(M\right)}\\
            &= \left<h, \div^\ast_g \Lap_H X\right>_{L^2\left(M\right)}\\
            &= -\left<\div_g h, \Lap_H X\right>_{L^2\left(M\right)}\\
            &= 0.
        \end{align*}
        Here $\Lap_H$ is the Hodge Laplacian and $\Lap_L$ is the Lichnerowicz Laplacian. Note that the second line is due to the relation $\Lap_E = \Lap_L + 2\left(n-1\right)$, while the fourth line is due to $\mathcal{K} \subset \ker\left(\div_g\right) $ and $h \in \mathcal{A} = \ker\left(\div_g\right)$, as well as the commutation relation between $\div^\ast$ and $\Lap_L$ from pages $28-29$ in \cite{lichnerowicz_commutators}.
    \end{proof}
    Now to check that the conditions of Theorem \ref{loj_general} are met in our setting.
    We first prove the following lemma, which controls the first order variation of $f_g$:
    \begin{lemma}\label{first_order_var_control}
         For any $g \in \mathcal{R}^k\left(M,\wh{g}\right)$ which is $\eps$-close to $\wh{g}$ for some sufficiently small $\eps > 0$, we have
        \begin{equation*}
            \norm{f'}_{H^2\left(M\right)} \leq C\norm{h}_{H^2\left(M\right)},
        \end{equation*}
        where $f' := \delta_s f\left(h\right)$ is the first order variation of $g \mapsto f_g$ in the direction $h$, while\\ $C = C\left(n,\eps,\wh{g}\right) > 0$ is a constant.
    \end{lemma}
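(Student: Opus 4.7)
The plan is to differentiate the Euler--Lagrange equation \eqref{el_equation_entropy} with respect to $g$ in the direction $h$ and then invert the resulting linear operator using Corollary \ref{cor_Lap_iso}. Writing $P(g,f) := 2\Lap_g f + |\nabla f|_g^2 - \Sc_g - n(n-1) + 2(n-1) f$ so that $P(g_s, f_{g_s}) = 0$ for $g_s = g + sh$, differentiating at $s=0$ will yield
\begin{equation*}
L_g f' := 2\Lap_g f' + 2\langle \nabla f_g, \nabla f'\rangle_g + 2(n-1) f' = R_g(h),
\end{equation*}
where $R_g(h)$ gathers the terms arising from varying $g$ while holding $f = f_g$ fixed. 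Schematically, $R_g(h)$ is a linear combination of $\nabla^2 h$, $\nabla h \ast \nabla f_g$, $h \ast \nabla^2 f_g$, $h \ast \nabla f_g \ast \nabla f_g$, and $h \ast \Rm_g$, originating respectively from the standard formula for $\delta \Sc_g[h]$, from the variation of the Christoffel symbols applied to $f_g$, and from the variation of the inverse metric appearing in $\Lap_g f_g$ and in $|\nabla f_g|_g^2$.

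The key observation is that at $g = \wh{g}$ the linearization simplifies dramatically: since $f_{\wh{g}} \equiv 0$ solves \eqref{el_equation_entropy} (using $\Sc_{\wh{g}} = -n(n-1)$), the drift term vanishes and $L_{\wh{g}} = 2(\Lap + (n-1))$. By Corollary \ref{cor_Lap_iso}, this is a Banach space isomorphism $H^2(M) \to L^2(M)$. I would then argue by a standard Neumann-series perturbation that $L_g$ remains an isomorphism $H^2(M) \to L^2(M)$ with uniformly bounded inverse for all $g$ sufficiently $H^k$-close to $\wh{g}$: the difference $L_g - L_{\wh{g}}$ has coefficients controlled by $\norm{g - \wh{g}}_{H^k} + \norm{f_g}_{H^k}$, both of which are small because $g \mapsto f_g$ is analytic into $H^k$ (as observed in Section \ref{sec_entropy}) with $f_{\wh{g}} = 0$.

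To close the argument I need the bound $\norm{R_g(h)}_{L^2} \leq C \norm{h}_{H^2}$. Since $k > n/2 + 2$, Sobolev embedding $H^k(M) \hookrightarrow C^2(M)$ (valid on AH manifolds due to bounded geometry in the appropriate sense) ensures that $\nabla f_g$, $\nabla^2 f_g$, and the curvature of $g$ are uniformly bounded in $L^\infty$ for $g$ near $\wh{g}$. Hence every summand in $R_g(h)$ is the product of an $L^\infty$-bounded factor with a term controlled in $L^2$ by $\norm{h}_{H^2}$. Combining this with the uniform invertibility of $L_g$ gives
\begin{equation*}
\norm{f'}_{H^2} \leq C\norm{L_g f'}_{L^2} = C\norm{R_g(h)}_{L^2} \leq C\norm{h}_{H^2},
\end{equation*}
as desired.

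The main obstacle is verifying that $L_g$ is genuinely an isomorphism on $H^2$, rather than merely Fredholm of index zero, since in the AH setting a nontrivial $L^2$-kernel could easily appear. This is circumvented here by the strict positivity of the zeroth-order coefficient $n - 1$ together with Corollary \ref{cor_Lap_iso}, which rules out any kernel for $L_{\wh{g}}$ and is preserved under small perturbations. A secondary technical point is securing uniform $H^k$-control on $f_g$ for $g$ in an $H^k$-neighborhood of $\wh{g}$, but this is already furnished by the analyticity of $g \mapsto f_g$ recalled in Section \ref{sec_entropy}.
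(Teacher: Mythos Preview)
Your approach is correct and essentially the same as the paper's. Both differentiate the Euler--Lagrange equation \eqref{el_equation_entropy}, invoke Corollary \ref{cor_Lap_iso} to invert $\Lap + (n-1)$, and exploit the smallness of $f_g$ (via Sobolev embedding from $H^k$-closeness) to control the drift term $2\langle\nabla f_g, \nabla f'\rangle$; the only cosmetic difference is that the paper moves this drift term to the right-hand side and absorbs it directly using $\|f_g\|_{C^2} < \eps'$, whereas you keep it on the left and package the same smallness into a Neumann-series perturbation of $L_{\wh{g}}$.
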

    \begin{proof}
        We first consider the variation of \eqref{el_equation_entropy}, the Euler--Lagrange equation for the entropy $\mu_{\mathrm{AH},\wh{g}}\left(g\right)$:
        \begin{equation*}
            2\Lap_g f_g + \abs{\nabla^g f_g}^2_g - \Sc_g - n\left(n-1\right) + 2\left(n-1\right)f_g = 0.
        \end{equation*}
        We refer the reader to Lemma $5.12$ in \cite{poincare_einstein_entropy} for the derivation. Taking the variation of this Euler--Lagrange equation yields
        \begin{align*}
            2\Lap_g f' + 2\left(n-1\right)f' = &-2\left<h,\nabla^{g,2} f_g\right> - 2\left<\div_g\left(h\right), \nabla^g f_g\right> + \left<\div_g\left(\tr_g\left(h\right)\right), \nabla^g f_g\right>\\
            &- h\left(\nabla^g f_g, \nabla^g f_g\right) + 2\left<\nabla_g f_g, \nabla^g f'\right> - \div^2_g\left(h\right) + \Lap_g \tr_g\left(h\right) + \left<h, \Ric\left(g\right)\right>.
        \end{align*}
        The needed variational formulas can be found in the proofs of Proposition $5.17$ and Proposition $5.22$ in \cite{poincare_einstein_entropy}.\\
        Since $\Lap + \left(n-1\right) : H^2\left(M\right) \rightarrow L^2\left(M\right)$ is an isomorphism by Lemma \ref{isomorphism_result}, we have the following estimate:
        \begin{equation*}
            \norm{f'}_{H^2\left(M\right)} \leq C\norm{\Lap_g f' + \left(n-1\right)f'}_{L^2\left(M\right)}
        \end{equation*}
        for some uniform constant $C > 0$. Since we have assumed $\frac{n}{2} + 2 < k$, we have $\norm{f}_{C^2\left(M\right)} < \eps' < 1$ for some $\eps' < \eps$ by the Sobolev embedding theorem and $g$ being $\eps$-close to $\wh{g}$ in the $H^k$-sense. Combining this and the estimate from above with the variation of the Euler--Lagrange equation, we have
        \begin{align*}
            \norm{f'}_{H^2\left(M\right)} &\leq C\left(n,\eps,\wh{g}\right)\norm{h}_{H^2\left(M\right)} + 2\norm{\left<\nabla^g f_g,\nabla^g f'\right>}_{L^2\left(M\right)}\\
            &\leq C\left(n,\eps,\wh{g}\right)\norm{h}_{H^2\left(M\right)} + \eps'\norm{\nabla^g f'}_{L^2\left(M\right)}.
        \end{align*}
        Absorption then implies the desired result: $\norm{f'}_{H^2\left(M\right)} \leq C\left(n,\eps,\wh{g}\right)\norm{h}_{H^2\left(M\right)}$.
    \end{proof}
    Now to prove the main estimates of this subsection.
    \begin{proposition}\label{loj_estimate_check}
          For any $g_1,g_2 \in \mathcal{R}^k\left(M,\wh{g}\right)$ which are each $\eps$-close to $\wh{g}$ for some sufficiently small $\eps > 0$, we have
        \begin{equation}\label{entropy_grad_estim}
            \norm{\nabla \mu_{\mathrm{AH},\wh{g}}\left(g_2\right) - \nabla \mu_{\mathrm{AH},\wh{g}}\left(g_1\right)}_{L^2\left(M\right)} \leq C\left(n,\eps,\wh{g}\right)\norm{g_2 - g_1}_{H^2\left(M\right)}.
        \end{equation}
        Under the same assumptions we have, for any $h \in H^2\left(M\right)$,
        \begin{equation}\label{entropy_grad_frechet_estim}
            \norm{D_{g_2}\nabla \mu_{\mathrm{AH},\wh{g}}\left(h\right) - D_{g_1}\nabla \mu_{\mathrm{AH},\wh{g}}\left(h\right)}_{L^2\left(M\right)} \leq C\left(n,\eps,\wh{g}\right)\norm{h}_{H^2\left(M\right)}.
        \end{equation}
    \end{proposition}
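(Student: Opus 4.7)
The starting point is the first variation formula stated in Section~\ref{sec_entropy}: on the open set where the minimizer $f_g$ in the definition of $\mu_{\mathrm{AH},\wh{g}}(g)$ exists and depends analytically on $g$, the $L^2$-gradient is given by
\[
\nabla\mu_{\mathrm{AH},\wh{g}}(g) = -\bigl(\Ric_g+\nabla^{g,2}f_g+(n-1)g\bigr)e^{-f_g}
\]
modulo a harmless Jacobian factor comparing $dV_g$ with $dV_{\wh{g}}$. This is an algebraic polynomial in $g$, its first two covariant derivatives, $f_g$, $\nabla f_g$, and $\nabla^2 f_g$, so the whole proposition reduces to controlling the composite map $g\mapsto(g,f_g)\mapsto\nabla\mu_{\mathrm{AH},\wh{g}}(g)$. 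The only nonalgebraic ingredient is the implicitly defined $f_g$, whose first variation has already been bounded in $H^2$ by Lemma~\ref{first_order_var_control}. A preliminary bootstrap via Lemma~\ref{isomorphism_result} applied to~\eqref{el_equation_entropy} upgrades $f_g$ from $H^2$ to $H^k$, so that the Sobolev embedding $H^k\hookrightarrow C^2$ (valid because $k>\tfrac{n}{2}+2$) yields uniform $C^0$-bounds on $f_g,\nabla f_g,\nabla^2 f_g$.

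For~\eqref{entropy_grad_estim}, I connect $g_1$ and $g_2$ by the affine path $g_t:=(1-t)g_1+tg_2$ and apply the fundamental theorem of calculus,
\[
\nabla\mu_{\mathrm{AH},\wh{g}}(g_2)-\nabla\mu_{\mathrm{AH},\wh{g}}(g_1)=\int_0^1 D_{g_t}\nabla\mu_{\mathrm{AH},\wh{g}}(g_2-g_1)\,dt,
\]
reducing matters to the uniform bound $\norm{D_g\nabla\mu_{\mathrm{AH},\wh{g}}(h)}_{L^2(M)}\le C\norm{h}_{H^2(M)}$ for all $g$ in a sufficiently small $H^k$-neighborhood of $\wh{g}$. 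Linearizing each of the three summands in the explicit gradient formula produces a finite sum of terms of the schematic form (coefficient depending algebraically on $g,f_g$ and their derivatives up to order two) times (linear expression of order $\le 2$ in $h$ and $f'(h)$). The $C^0$-bounds from the preceding paragraph dominate the coefficient factors in $L^\infty$, while Lemma~\ref{first_order_var_control} delivers $\norm{f'(h)}_{H^2(M)}\le C\norm{h}_{H^2(M)}$. Combining these gives the $L^2$-estimate on each term and hence the desired Lipschitz bound.

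For~\eqref{entropy_grad_frechet_estim}, I keep the same decomposition of $D_g\nabla\mu_{\mathrm{AH},\wh{g}}(h)$ but take its difference at $g_1$ and $g_2$ with $h$ held fixed. Each summand has the schematic shape $P(g,f_g)\cdot Q(g,h,f'_g(h))$ with $P,Q$ polynomial in their arguments and in the first two covariant derivatives, so by the product rule the estimate reduces to two ingredients: (a) a $C^0$-continuity statement for the coefficient maps $g\mapsto P(g,f_g)$, which follows from Sobolev embedding together with the Lipschitz bound $\norm{f_{g_2}-f_{g_1}}_{H^k(M)}\le C\norm{g_2-g_1}_{H^k(M)}$ obtained by integrating the $H^k$-version of Lemma~\ref{first_order_var_control} along the path $g_t$; and (b) a mixed continuity estimate $\norm{f'_{g_2}(h)-f'_{g_1}(h)}_{H^2(M)}\le C\norm{g_2-g_1}_{H^2(M)}\norm{h}_{H^2(M)}$, derived by differentiating in $g$ the linearized Euler--Lagrange equation defining $f'_g(h)$, inverting $\Delta_g+(n-1)$ via Lemma~\ref{isomorphism_result}, and absorbing the first-order term $\langle\nabla^g f_g,\nabla^g(\cdot)\rangle$ using the smallness $\norm{f_g}_{C^2(M)}\le\eps'<1$ from Sobolev embedding, exactly mirroring the absorption step at the end of the proof of Lemma~\ref{first_order_var_control}.

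The main technical obstacle is part~(b): it is effectively a uniform control of a mixed second variation of the implicit map $g\mapsto f_g$ in the directions $(g_2-g_1,h)$, so both derivatives of the quasilinear equation~\eqref{el_equation_entropy} must be tracked carefully on a background of only $C^{2,\alpha}$ regularity. The isomorphism property of $\Delta+(n-1)$ on the pair $(H^2,L^2)$ from Lemma~\ref{isomorphism_result} and the $C^0$-smallness of $f_g$ that drives the absorption argument are precisely the tools that make this go through in unweighted Sobolev spaces, which is what ultimately distinguishes the AH setting from the ALE case treated in~\cite{ale_loj}.
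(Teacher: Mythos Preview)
Your approach for \eqref{entropy_grad_estim} is essentially the paper's: linearize along the affine path, reduce to a uniform operator bound on $D_g\nabla\mu_{\mathrm{AH},\wh{g}}$, and control it via the $C^0$-bounds on $f_g$ together with Lemma~\ref{first_order_var_control}. The paper carries this out term by term (the Ricci difference via an explicit Shi-type expansion, the Hessian difference by writing Hessians as Lie derivatives and differentiating along $g_t$), but the logic is the same.

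For \eqref{entropy_grad_frechet_estim} you over-engineer. As literally stated, the inequality only asserts that the difference $D_{g_2}\nabla\mu_{\mathrm{AH},\wh{g}}-D_{g_1}\nabla\mu_{\mathrm{AH},\wh{g}}$ is a bounded operator $H^2\to L^2$ with norm $\le C(n,\eps,\wh{g})$; this already follows from the uniform boundedness of each $D_{g_i}\nabla\mu_{\mathrm{AH},\wh{g}}$ that you established in the argument for the first estimate, with no need for your ingredient (b). The paper's one-line proof (``using this expression along with the estimates and procedures from above'') reflects this. Your (b) would be needed if one wanted genuine continuity of $g\mapsto D_g\nabla\mu_{\mathrm{AH},\wh{g}}$ in the operator norm (which is indeed what condition~(2) in Theorem~\ref{loj_general} asks for), but then the claimed rate $\norm{g_2-g_1}_{H^2}\norm{h}_{H^2}$ is too optimistic: differentiating the linearized Euler--Lagrange equation in the direction $g_2-g_1$ produces terms of the schematic type $(g_2-g_1)\ast\nabla^2 h$, which you can only place in $L^2$ by putting $g_2-g_1$ in $L^\infty$, i.e.\ by using the $H^k$-norm (via Sobolev embedding) on that factor rather than the $H^2$-norm. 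Since $g_1,g_2$ are $\eps$-close in $H^k$ this still suffices, but the dependence you wrote is not the one your argument actually yields.
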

    \begin{proof}
        The $L^2$-gradient of $\mu_{\mathrm{AH},\wh{g}}$ is
        \begin{equation*}
            \nabla \mu_{\mathrm{AH},\wh{g}}\left(g\right) = \left(\Ric\left(g\right) + \nabla^{g,2} f_g + \left(n-1\right)g\right)e^{-f_g}.
        \end{equation*}
        Therefore, we initially have the following estimate:
        \begin{equation*}
            \begin{split}
                &\abs{\nabla \mu_{\mathrm{AH},\wh{g}}\left(g_2\right) - \nabla \mu_{\mathrm{AH},\wh{g}}\left(g_1\right)} \\
                &\leq \abs{\left(\Ric\left(g_2\right) + \nabla^{g_2,2} f_{g_2} + \left(n-1\right)g_2\right)e^{-f_{g_2}} - \left(\Ric\left(g_1\right) + \nabla^{g_1,2} f_{g_1} + \left(n-1\right)g_1\right)e^{-f_{g_1}}}.
            \end{split}                    
        \end{equation*}
        Taylor expanding reduces the proof of \eqref{entropy_grad_estim} to proving the following estimates, where\\ $h := g_2 - g_1$,
        \begin{align*}
            \norm{\Ric\left(g_2\right) - \Ric\left(g_1\right)}_{L^2\left(M\right)} &\leq C\left(n,\eps,\wh{g}\right)\norm{h}_{H^2\left(M\right)}\\ 
            \norm{\nabla^{g_2,2} f_{g_2} - \nabla^{g_1,2} f_{g_1}}_{L^2\left(M\right)} &\leq C\left(n,\eps,\wh{g}\right)\norm{h}_{H^2\left(M\right)}
        \end{align*}             
        For the first inequality, we can get a more convenient, albeit long, expression using the following formula for the difference $\Ric\left(g_2\right) - \Ric\left(g_1\right)$:
        \begin{align*}
            -2\Ric\left(g_2\right) + 2\Ric\left(g_1\right) = g^{-1}_2 \ast \nabla^{g_1,2} h + g^{-1}_2 \ast h \ast \Rm\left(g_1\right) + g^{-1}_2 \ast g^{-1}_2 \ast \nabla^{g_1} h \ast \nabla^{g_1} h - \mathscr{L}_B\left(g_2\right),
        \end{align*}
        where $B\left(g_1,g_2\right) := \div_{g_1}\left(g_2 - g_1\right) - \frac{1}{2}\nabla^{g_1}\tr_{g_1}\left(g_2 - g_1\right) + g^{-1}_2 \ast \left(g_2 - g_1\right) \ast \nabla^{g_1}\left(g_2 - g_1\right)$. Details about the derivation can be found in multiple places, for instance the proof of Lemma $2.1$ in \cite{shi}.\\
        We will need the following identity for the Lie derivative of a symmetric $2$-tensor $T$:
        \begin{equation}\label{metric_lie_deriv}
            \mathscr{L}_X \left(T\right) = T_{ik} \nabla_j X^i + T_{ij} \nabla_k X^i + X^i \nabla_i T_{kj}.
        \end{equation}
        Also, note that for any tensor $T$ one has
        \begin{equation}\label{metric_grad_swap}
            \nabla^{g_2} T = \nabla^{g_1}T + g^{-1}_1 \ast \nabla^{g_1} h \ast T,
        \end{equation}
        which follows from looking at the difference of the Levi-Civita connections, hence the difference of the associated Christoffel symbols. In the end, we get the following estimate:
        \begin{equation*}
            \norm{\Ric\left(g_2\right) - \Ric\left(g_1\right)}_{L^2\left(M\right)} \leq C\left(\eps, \wh{g}\right)\norm{h}_{H^2\left(M\right)}.
        \end{equation*}

        Note that we also used $\abs{\Rm\left(g_1\right)} < \infty$ and $\norm{h}_{C^2\left(M\right)} < \infty$ by the Sobolev embedding theorem.\\
        Now for the difference of Hessians. We can rewrite them as Lie derivatives and manipulate as follows (where $g_t := g_1 + \left(t-1\right)h$):
        \begin{align*}
            \nabla^{g_2,2}f_{g_2} - \nabla^{g_1,2}f_{g_1} &= \mathscr{L}_{\nabla^{g_2}f_{g_2}}\left(g_2\right) - \mathscr{L}_{\nabla^{g_1}f_{g_1}}\left(g_1\right)\\
            &= \int^2_1 \frac{d}{ds}\mathscr{L}_{\nabla^{g_s}f_{g_s}}\left(g_s\right) ds\\
            &= \int^2_1 \mathscr{L}_{\nabla^{g_s}f_{g_s}}\left(h\right) - \mathscr{L}_{h\left(\nabla^{g_s} f_{g_s}\right)}\left(g_s\right) + \mathscr{L}_{\nabla^{g_s}f'}\left(g_s\right)ds.
        \end{align*}          
        Using \eqref{metric_lie_deriv} and the boundedness of $\nabla_{g_s}f_{g_s}$, we get
        \begin{align*}
            \abs{\mathscr{L}_{\nabla^{g_s}f_s}\left(h\right)} &\leq 2\abs{h}\abs{\nabla^{g_s,2} f_{g_s}} + \abs{\nabla^{g_s} h} \abs{\nabla^{g_s} f_{g_s}} \leq C\left(n,\eps,\wh{g}\right)\left(\abs{h} + \abs{\nabla^{g_1}h}\right)\\
            \abs{\mathscr{L}_{h\left(\nabla^{g_s}f_{g_s}\right)}\left(g_s\right)} &\leq 2\left(\abs{\nabla^{g_s} h}\abs{\nabla^{g_s} f_{g_s}} + \abs{h} \abs{\nabla^{g_s,2} f_{g_s}}\right)\leq C\left(n,\eps,\wh{g}\right)\left(\abs{h} + \abs{\nabla^{g_1}h}\right).
        \end{align*}
        Note that to get the final inequality in each line we also used \eqref{metric_grad_swap} with $g_2 = g_s$ and $T = h$. To deal with the final term $\mathscr{L}_{\nabla^{g_s}f'}\left(g_s\right)$, we can use Lemma \ref{first_order_var_control} to get
       \begin{equation*}
            \norm{\mathscr{L}_{\nabla^{g_s}f'}\left(g_s\right)}_{L^2\left(M\right)} \leq C\norm{\nabla^{g_s,2}f'}_{L^2\left(M\right)} \leq C\norm{f'}_{H^2\left(M\right)} \leq C\norm{h}_{H^2\left(M\right)}.
        \end{equation*}
        Putting everything together and using $h = g_2 - g_1$ yields \eqref{entropy_grad_estim} as desired.\\
        As for \eqref{entropy_grad_frechet_estim}, note first that we have
        \begin{equation*}
            \begin{split}
                D_g \nabla \mu_{\mathrm{AH},\wh{g}}\left(h\right) &= D_g\left(\left(\Ric\left(g\right) + \nabla^{g,2} f_g + \left(n-1\right) g\right)e^{-f_g}\right)\\
                &= \left[\frac{1}{2}\Lap_L h + \mathscr{L}_{\div_g\left(h\right)} g - \nabla^{g,2} \tr_g\left(h\right) + \left(n-1\right)h\right.\\
                &+ \left.\left(n-1\right)h + \left(\frac{1}{2}\tr_g\left(h\right) - f'\right) \left(\Ric\left(g\right) + \nabla^{g,2} f_g + \left(n-1\right)g\right)\right.\\
                &+ \left.\nabla^{g,2} f' - \frac{1}{2}\left(\nabla_i h_{jk} + \nabla_j h_{ik} - \nabla_k h_{ij}\right) \nabla_k f\right].
            \end{split}
        \end{equation*}
        Using this expression along with the estimates and procedures from above, we have
        \begin{equation*}
            \norm{D_{g_2}\nabla \mu_{\mathrm{AH},\wh{g}}\left(h\right) - D_{g_1}\nabla \mu_{\mathrm{AH},\wh{g}}\left(h\right)}_{L^2\left(M\right)} \leq C\left(n,\eps,\wh{g}\right)\norm{h}_{H^2\left(M\right)}
        \end{equation*}
        for a variation $h \in H^2\left(M\right)$, which is what we wanted.
    \end{proof}
    We can now prove the desired inequality.
    \begin{theorem}\label{loj_ineq_thm}
        There is a neighborhood $\mathcal{U} \subset \mathcal{R}^k\left(M,\wh{g}\right)$ of $\wh{g}$, in which the entropy $\mu_{\mathrm{AH},\wh{g}}$ satisfies the following: there exist a positive constant $C > 0$ and an exponent $\theta \in \left(0,1\right]$ such that we have
        \begin{equation}\label{loj_ineq}
            \abs{\mu_{\mathrm{AH},\wh{g}}\left(g\right) - \mu_{\mathrm{AH},\wh{g}}\left(\wh{g}\right)}^{2-\theta} \leq C\norm{\nabla\mu_{\mathrm{AH},\wh{g}}\left(g\right)}^2_{L^2\left(M\right)}.
        \end{equation}
    \end{theorem}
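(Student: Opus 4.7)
The plan is to apply the abstract \L ojasiewicz--Simon theorem (Theorem \ref{loj_general}) to the shifted entropy $\mathcal{F}(h) := \mu_{\mathrm{AH},\wh{g}}(\wh{g}+h) - \mu_{\mathrm{AH},\wh{g}}(\wh{g})$ on a neighborhood of $0$ in $H^k(M;S^2T^*M)$, and then use diffeomorphism invariance together with the slice theorem (Proposition \ref{entropy_analytic_extension}) to propagate the resulting inequality on the slice to a full $H^k$-neighborhood of $\wh{g}$. The natural choice of subspaces is $\mathcal{A} := \ker(\div_{\wh{g}})\subset L^2(M;S^2T^*M)$ and $\mathcal{B} := \div_{\wh{g}}^*(C_c^\infty(TM))^{\perp}\subset L^2(M;S^2T^*M)$; both are closed in $L^2$ (and their intersections with $H^k$, $H^{k-2}$ are closed therein), and $\mathcal{B}$ is precisely the space identified by Lemma \ref{image_lemma}.

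Once these subspaces are fixed, Assumptions (1) and (2) of Theorem \ref{loj_general} are read off directly from Proposition \ref{loj_estimate_check}: estimate \eqref{entropy_grad_estim} gives the $H^2 \to L^2$ Lipschitz bound for $\nabla\mathcal{F}$, and \eqref{entropy_grad_frechet_estim} gives the continuity of the Fr\'echet derivative as a map $H^2 \to L^2$. The requirement $\nabla\mathcal{F}(0)=0$ is clear since $\wh{g}$ is PE and the entropy minimizer at $\wh{g}$ is $f_{\wh{g}}\equiv 0$, so that $\nabla\mu_{\mathrm{AH},\wh{g}}(\wh{g}) = (\Ric_{\wh{g}} + (n-1)\wh{g})e^{0} = 0$. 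Analyticity of $\mathcal{F}$ on an $H^k$-neighborhood was recorded after Proposition \ref{entropy_analytic_extension}.

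For Assumptions (3) and (4), I would compute the linearization $L := D_0\nabla\mathcal{F}$ at $\wh{g}$. Differentiating the formula for $\nabla\mu_{\mathrm{AH},\wh{g}}$ (displayed in the proof of Proposition \ref{loj_estimate_check}) and using $f_{\wh{g}}\equiv 0$ together with $\Ric_{\wh{g}} + (n-1)\wh{g} = 0$, all terms weighted by $\nabla f_{\wh{g}}$ or by the vanishing Einstein tensor drop out. On divergence-free tensors the $\mathscr{L}_{\div_{\wh{g}}(h)}\wh{g}$ and $\nabla^2\mathrm{tr}\,h$ gauge contributions vanish (after absorbing the trace part into the kernel using the variation $f'$ of the minimizer controlled by Lemma \ref{first_order_var_control}), leaving $L = \tfrac{1}{2}\Delta_E$ modulo a compact operator. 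Symmetry of $L$ is immediate. By Remark \ref{fredholm_remark}, $\Delta_E$ is Fredholm of index zero as $H^2 \cap \mathcal{A} \to L^2$ and as $H^k \cap \mathcal{A} \to H^{k-2}$, and Lemma \ref{image_lemma} identifies the image of $\Delta_E+\Pi_{\mathcal{K}}$ restricted to $H^2 \cap \mathcal{A}$ as $L^2 \cap \mathcal{B}$; the same argument applied on $H^k$ yields Assumption (4). Theorem \ref{loj_general} then yields \eqref{loj_ineq} for every $g = \wh{g}+h \in \mathcal{S}_{\wh{g}}$ with $h$ sufficiently small.

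To finish, any $g$ in an $H^k$-neighborhood of $\wh{g}$ can be written as $g = \varphi^*\wt{g}$ with $\wt{g}\in\mathcal{U}\cap\mathcal{S}_{\wh{g}}$ and $\varphi\in\mathrm{Diff}^{k+1}(M)$ close to the identity, by Proposition \ref{entropy_analytic_extension}. Since $\mu_{\mathrm{AH},\wh{g}}$ is invariant under pullback by diffeomorphisms sufficiently close to the identity (the Euler--Lagrange equation \eqref{el_equation_entropy} is equivariant, and the boundary contributions in $m_{\mathrm{ADM},\wh{g}}$ and $RV_{\wh{g}}$ transform compatibly for such $\varphi$), the left-hand side of \eqref{loj_ineq} is unchanged when passing from $g$ to $\wt{g}$, while the gradient norms are comparable up to a constant arising from the uniform $H^{k+1}$-control on $\varphi$. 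Combining these gives the inequality in full generality. The step I expect to be most delicate is the identification of $L$ with $\tfrac{1}{2}\Delta_E$ on $\mathcal{A}$: since the formula for $D\nabla\mu_{\mathrm{AH},\wh{g}}$ in Proposition \ref{loj_estimate_check} contains the implicit variation $f'$ of the minimizer, one must verify carefully that at $\wh{g}$ its contribution is either trivial or absorbed into the finite-dimensional kernel $\mathcal{K}$, so that the Fredholm theory of $\Delta_E$ applies without weight losses.
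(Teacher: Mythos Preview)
Your approach is essentially the same as the paper's: apply Theorem \ref{loj_general} to $\mu_{\mathrm{AH},\wh{g}}$ restricted to the slice $\mathcal{A}=\ker(\div_{\wh{g}})$ with target $\mathcal{B}=\div_{\wh{g}}^*(C_c^\infty(TM))^\perp$, verify (1)--(2) via Proposition \ref{loj_estimate_check}, verify (3)--(4) via the Fredholm theory of $\Delta_E$ (Remark \ref{fredholm_remark}, Lemma \ref{image_lemma}), and then extend to the full neighborhood by diffeomorphism invariance and the slice theorem.

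Two small points where the paper is sharper than your sketch. First, you hedge the linearization as ``$\tfrac{1}{2}\Delta_E$ modulo a compact operator'' and worry about the $f'$ contribution; the paper instead cites \cite[Proposition 5.22]{poincare_einstein_entropy}, which shows that on divergence-free tensors at $\wh{g}$ the linearization is \emph{exactly} $-\tfrac{1}{2}\Delta_E$ (the $\nabla^2 f'$ and $-\nabla^2\mathrm{tr}\,h$ terms do not separately vanish on $\mathcal{A}$ --- divergence-free is not trace-free --- but they combine correctly once one uses the linearized Euler--Lagrange equation for $f'$). Your ``modulo compact'' would still suffice for Fredholmness, but it is worth knowing the clean identification holds. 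Second, for the extension off the slice you say the gradient norms are ``comparable up to a constant arising from the $H^{k+1}$-control on $\varphi$''; the paper observes more simply that both sides of \eqref{loj_ineq} are \emph{exactly} diffeomorphism invariant, and that restricting to the slice only decreases the gradient norm ($\|\nabla\mu^{\mathrm{res}}_{\mathrm{AH},\wh{g}}\|_{L^2}\le\|\nabla\mu_{\mathrm{AH},\wh{g}}\|_{L^2}$), so no constant is needed.
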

    \begin{proof}
        By the observations made in Section \ref{sec_entropy}, we know that the relative entropy is defined on a neighborhood $\mathcal{U} \subset \mathcal{R}^k\left(M,\wh{g}\right)$ of $\wh{g}$. We now further take $\mathcal{U}$ small enough so that Lemma \ref{first_order_var_control} and Proposition \ref{loj_estimate_check} hold on it. Note that both sides of \eqref{loj_ineq} are diffeomorphism invariant and recall that the discussion immediately before Proposition \ref{entropy_analytic_extension} tells us $\mu_{\mathrm{AH},\wh{g}}$ is analytic on $\mathcal{U}$. Next, denote the neighborhood from Proposition \ref{entropy_analytic_extension} by $\mathcal{U}_{\mathrm{slice}}$. Note that, possibly after shrinking $\mathcal{U}_{\mathrm{slice}}$, we can assume $ \mathcal{U}_{\mathrm{slice}} \subset \mathcal{U}$. Proposition \ref{entropy_analytic_extension} tells us we can restrict the domain of $\mu_{\mathrm{AH},\wh{g}}$ to $\mathcal{U}_{\mathrm{slice}} \subset \mathcal{R}^k\left(M,\wh{g}\right) \cap \ker\left(\div_{\wh{g}}\right)$. We also restrict the image, by Lemma \ref{image_lemma}, to $H^{k-2}\left(M\right) \cap \div^\ast_{\wh{g}}\left(C^\infty_c\left(TM\right)\right)^\perp$. Denote this restriction by $\mu^{\mathrm{res}}_{\mathrm{AH},\wh{g}}$. Note that it is still analytic and
        \begin{equation*}
            \norm{\nabla \mu^{\mathrm{res}}_{\mathrm{AH},\wh{g}}}_{L^2\left(M\right)} \leq \norm{\nabla \mu_{\mathrm{AH},\wh{g}}}_{L^2\left(M\right)}.
        \end{equation*}
        This tells us Conditions $(1)$ and $(2)$ in Theorem \ref{loj_general} are satisfied by $\mu^{\mathrm{res}}_{\mathrm{AH},\wh{g}}$ since they are satisfied by $\mu_{\mathrm{AH},\wh{g}}$ due to Proposition \ref{loj_estimate_check} and Corollary $5.18$ from \cite{poincare_einstein_entropy}.\\
        Conditions $(3)$ and $(4)$ in Theorem \ref{loj_general} are satisfied due Proposition $5.22$ in \cite{poincare_einstein_entropy}, which tells us the linearization of $\nabla \mu^{\mathrm{res}}_{\mathrm{AH},\wh{g}}$ at $\wh{g}$ is $-\frac{1}{2}\Lap_{E,\wh{g}}$. This is symmetric and Fredholm between $H^2$ and $L^2$, as well as between $H^k$ and $H^{k-2}$, as discussed in Remark \ref{fredholm_remark}. Furthermore, boundedness follows from the definition of the Sobolev norms:
        \begin{align*}
            \norm{\Lap_{E,\wh{g}} h}_{L^2\left(M\right)} &\leq C\left(n,\eps,\wh{g}\right)\norm{h}_{H^2\left(M\right)}\\
            \norm{\Lap_{E,\wh{g}} h}_{H^{k-2}\left(M\right)} &\leq C\left(n,\eps,\wh{g}\right)\norm{h}_{H^k\left(M\right)}.\qedhere
        \end{align*}
    \end{proof}
    \begin{remark}
        While the assumption about $g$ lying in a sufficiently small $H^k$-neighborhood of $\wh{g}$ might seem too strong, it is naturally satisfied in our setting. In particular, a result of Bamler (see Lemma \ref{rdtf_hl_bounds}) tells us that the Ricci-DeTurck flow starting in a sufficiently small $L^2 \cap L^\infty$-neighborhood of $\wh{g}$ will eventually lie in an $H^k$-neighborhood of $\wh{g}$, for $k$ as large as one wants.
    \end{remark}

\section{Proof of the Stability and Instability Theorems}\label{stability_thms}
    Before proving our stability and instability theorems, we record some results due to Bamler (\cite{bamler_symm_spaces}) that will be useful. Firstly, it will be convenient for us to consider the Ricci-DeTurck flow:
    \begin{equation}\label{rdtf}
        \begin{cases}
            \partial_t g\left(t\right) = -2\left(\Ric_{g\left(t\right)} + \left(n-1\right)g\left(t\right) + \frac{1}{2}\mathscr{L}_{X_{\wh{g}}\left(g\left(t\right)\right)}\left(g\left(t\right)\right)\right)\\
            g_0 = g\left(0\right).
        \end{cases}
    \end{equation}
    Here $X_{\wh{g}}\left(h\right)$ is a vector field defined as, for any $h \in S^2 T^\ast M$,
    \begin{equation*}
        X_{\wh{g}}\left(h\right) := \div_{\wh{g}}\left(h\right) + \frac{1}{2}\nabla^{\wh{g}}\tr_{\wh{g}}\left(h\right).
    \end{equation*}
    Proposition $2.4$ from \cite{bamler_symm_spaces} gives short time existence and $L^\infty$-bounds on the perturbation $h\left(t\right):=g\left(t\right)-\wh{g}$:
    \begin{proposition}\label{rdtf_short_time}
        Let $\left(M,\wh{g}\right)$ be a complete Riemannian manifold with its curvature tensor globally bounded in the $C^{0,\alpha}$-sense and let $g_0=\wh{g}+h_0$  be a smooth metric on $M$. Then there are constants $\eps,\tau > 0$, $0 < C < \infty$ depending only on $M$ and $\wh{g}$ such that, if $\norm{h_0}_{L^\infty\left(M\right)} < \eps$, there is a unique $L^\infty$-bounded smooth solution $\left\{h\left(t\right)\right\}_{t \in \left[0,\tau\right]}$ to \eqref{rdtf}. Moreover, we have
        \begin{equation*}
            \norm{h\left(t\right)}_{L^\infty\left(M \times \left[0,\tau\right]\right)} \leq C\norm{h_0}_{L^\infty\left(M\right)}.
        \end{equation*}
    \end{proposition}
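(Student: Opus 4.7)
The plan is to recast \eqref{rdtf} as a quasilinear parabolic PDE for the perturbation $h(t):=g(t)-\wh{g}$ and then apply a standard contraction-plus-maximum-principle package. The DeTurck vector field $X_{\wh{g}}(g)$ is engineered precisely so that the symbol of the right hand side, expressed in terms of $h$, becomes the rough Laplacian $\Lap_{\wh{g}}$: one computes that $h$ satisfies
\begin{equation*}
\partial_t h = \Lap_{\wh{g}} h + \wh{g}^{-1}\ast h \ast \Rm_{\wh{g}} + Q(h,\nabla h),
\end{equation*}
where $Q$ is an algebraic expression, polynomial in $\wh{g}^{-1}$ and $(\wh{g}+h)^{-1}$, that is quadratic in $(h,\nabla h)$. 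The $C^{0,\alpha}$-bound on $\Rm_{\wh{g}}$ yields bounded geometry for $\wh{g}$ and hence uniform parabolic estimates for $\partial_t-\Lap_{\wh{g}}$.

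For short-time existence, I would set up a fixed-point iteration on the space $\mathcal{X}_{\tau,A}:=\{h\in C^0([0,\tau];L^\infty)\cap C^{1,2;\alpha}_{\mathrm{loc}} : \|h\|_{L^\infty(M\times[0,\tau])}\leq A\}$ with $A\sim\|h_0\|_{L^\infty}$, where the iteration map sends $h$ to the solution of the linear parabolic equation $(\partial_t-\Lap_{\wh{g}})\tilde h = \wh{g}^{-1}\ast h\ast\Rm_{\wh{g}} + Q(h,\nabla h)$ with initial data $h_0$. Using the heat semigroup $e^{t\Lap_{\wh{g}}}$, which satisfies Gaussian upper bounds under bounded geometry, one gets $L^\infty$--$L^\infty$ and $L^\infty$--parabolic H\"older mapping estimates (Krylov--Safonov/Schauder on unit parabolic cylinders, patched by bounded geometry). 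Smallness of $\varepsilon$ and $\tau$ turns the iteration into a contraction because the nonlinear terms $Q(h,\nabla h)$ gain a factor of $\|h\|_{L^\infty}+\sqrt{\tau}\|\nabla h\|_*$. Uniqueness in the $L^\infty$-bounded class follows from the same contraction estimate applied to the difference of two solutions.

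For the quantitative $L^\infty$-bound, I would use the parabolic maximum principle applied to $u:=|h|^2_{\wh{g}}$. A direct computation, combined with Young's inequality to absorb the $|\nabla h|^2$ terms arising from the quadratic part of $Q$, gives
\begin{equation*}
(\partial_t-\Lap_{\wh{g}})u \leq C_1 u
\end{equation*}
on $M\times[0,\tau]$, with $C_1$ depending only on $n$, $\|\Rm_{\wh{g}}\|_{C^{0,\alpha}}$ and the a priori bound $\|h\|_{L^\infty}\leq A$. Since $h$ is already known to be $L^\infty$-bounded, the Omori--Yau / Ecker--Huisken version of the maximum principle on complete manifolds with bounded curvature applies and yields $u(x,t)\leq e^{C_1 t}\|h_0\|_{L^\infty}^2$, hence $\|h(t)\|_{L^\infty}\leq e^{C_1\tau/2}\|h_0\|_{L^\infty}=:C\|h_0\|_{L^\infty}$. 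Smoothness of $h$ on $(0,\tau]$ follows by parabolic bootstrap: once $h$ lies in $L^\infty\cap C^{1,2;\alpha}_{\mathrm{loc}}$, each differentiated equation $(\partial_t-\Lap_{\wh{g}})\nabla^k h = (\text{Schauder-controlled})$ gives the next order of regularity on interior parabolic cylinders, uniformly in the point by bounded geometry.

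The main obstacle is the noncompactness of $M$: neither the contraction argument nor the maximum principle is automatic, and both rely essentially on the bounded-geometry hypothesis to obtain global-in-space estimates. In particular, one must invoke a version of the maximum principle valid for complete manifolds with bounded Ricci curvature (and an a priori $L^\infty$ bound on the evolving tensor), and one must have uniform heat kernel estimates for $e^{t\Lap_{\wh{g}}}$ to run the fixed-point argument globally rather than only on compact exhaustions; both are supplied by the $C^{0,\alpha}$-bound on $\Rm_{\wh{g}}$.
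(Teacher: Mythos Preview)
The paper does not prove this proposition at all: it is quoted verbatim as Proposition~2.4 of Bamler \cite{bamler_symm_spaces}, with no argument supplied. Your sketch is a faithful outline of that standard argument (quasilinear parabolic form of the DeTurck flow, contraction on a small time interval using heat-kernel/Schauder estimates under bounded geometry, then a maximum principle for $|h|^2$), so there is nothing substantive to compare.

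One small correction worth making if you flesh this out: the principal part of the DeTurck equation for $h$ is $g^{ab}\wh\nabla_a\wh\nabla_b h$ with $g=\wh{g}+h$, not literally $\Lap_{\wh{g}}h$; the difference is of the form $h\ast\wh\nabla^2 h$ and is harmless for small $\|h\|_{L^\infty}$, but it should be accounted for in both the contraction estimate and the differential inequality for $|h|^2$. Likewise, the absorption of the $|\nabla h|^2$ terms in the inequality $(\partial_t-\Lap_{\wh{g}})|h|^2\le C_1|h|^2$ uses the good term $-2|\wh\nabla h|^2$ coming from $\Lap_{\wh{g}}|h|^2$ together with smallness of $|h|$, not just Young's inequality; you have the right idea but the mechanism is worth stating precisely.
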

    Lemma $6.2$ from \cite{bamler_symm_spaces} then gives $H^\ell$-control on the perturbation, for $\ell \geq 0$ as large as one likes. We have rephrased it slightly to fit in our setting better.
    \begin{lemma}\label{rdtf_hl_bounds}
        Let $\left(M,\wh{g}\right)$ be a PE manifold of class $C^{2,\alpha}$ and let $g_0=\wh{g}+h_0$ be a smooth metric on $M$. Then there are constants $A_\ell,\eps_\ell > 0$ such that for all $\ell, b \geq 0$ the following holds. Let $\left\{h_t\right\}_{t \in \left[0,\tau\right]}$ be a solution to the Ricci-deTurck flow with $\abs{h_0} < \eps_\ell$ and $\norm{h_0}_{L^2\left(M\right)} \leq b$. Then for all $t \in \left[0,\tau\right]$, with $\tau > 0$ the existence time from Proposition \ref{rdtf_short_time}, and $\ell = 0$, or $\ell \geq 1$ and $t \in \left[\frac{\tau}{2},\tau\right]$, we have
        \begin{equation*}
            \norm{h\left(t\right)}_{H^\ell\left(M\right)} \leq A_\ell b.
        \end{equation*}
        Also, for all $t \in \left[\frac{\tau}{2},\tau\right]$ we have the pointwise estimates
        \begin{equation*}
            \abs{\nabla^\ell h\left(t\right)} \leq A_\ell b.
        \end{equation*}
    \end{lemma}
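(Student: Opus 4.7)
The plan is to follow the strategy of Lemma $6.2$ in \cite{bamler_symm_spaces} closely, verifying that Bamler's argument, originally written for symmetric spaces of noncompact type, adapts directly to the PE setting. The key structural fact that makes the argument portable is that any PE manifold has uniformly bounded geometry of all orders: the Einstein condition combined with $C^{2,\alpha}$ conformal compactness forces $|\nabla^k \mathrm{Rm}|$ to be bounded for every $k$ (use the elliptic equation \eqref{rm_evo} for $\mathrm{Rm}$ together with bootstrapping via Lemma \ref{isomorphism_result}), and the injectivity radius is uniformly bounded below because the curvature approaches $-1$ at infinity.

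The first step is to derive the quasilinear parabolic equation satisfied by $h(t) := g(t) - \wh{g}$ under \eqref{rdtf}. A standard computation (as in, e.g., Shi's derivation cited in the proof of Proposition \ref{loj_estimate_check}) shows $h$ satisfies a strictly parabolic equation of schematic form
\[
\partial_t h = \Delta_{\wh{g}} h + \mathrm{Rm}(\wh{g}) \ast h + \sum_{i+j \leq 2,\ i,j \geq 0} \nabla^i h \ast \nabla^j h,
\]
where the nonlinear terms vanish to second order at $h=0$. Proposition \ref{rdtf_short_time} supplies uniform $L^\infty$-smallness of $h(t)$ on $[0,\tau]$, which is the input that controls the quadratic nonlinearities in what follows.

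Next I would derive the $L^2$ bound ($\ell=0$). Compute $\tfrac{d}{dt}\int_M |h|^2\,\dv_{\wh{g}}$, integrate the Laplacian term by parts to produce a nonpositive $-2\int |\nabla h|^2$ contribution, and estimate the curvature and nonlinear terms by $C\int |h|^2$ using the $L^\infty$-smallness of $h$ together with an absorption of $|\nabla h|^2$ into the good Dirichlet term. Gronwall over $[0,\tau]$ yields $\norm{h(t)}_{L^2} \leq A_0 b$. The step from $\ell=0$ to higher $\ell$ at interior times $t\in[\tau/2,\tau]$ uses a Bernstein-type/energy induction: testing the evolution equation with $\nabla^{2\ell} h$ (appropriately contracted), commuting covariant derivatives through the Laplacian (which produces curvature terms that are bounded by the PE geometry), and absorbing the top-order quadratic terms using the lower-order pointwise bounds already established at earlier times. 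A standard localization in time with a smooth cutoff $\eta(t)$ supported in $[\tau/4,\tau]$ and equal to $1$ on $[\tau/2,\tau]$ converts $L^2$ control on $[0,\tau]$ into $H^\ell$ control on $[\tau/2,\tau]$, yielding the Sobolev bound. The pointwise bound then follows either from the Sobolev embedding $H^\ell \hookrightarrow C^{\ell - \lceil n/2\rceil - 1}$ (for $\ell$ large enough) combined with the bounded-geometry Sobolev inequality on AH manifolds, or, more efficiently, from interior parabolic Schauder estimates applied in uniform local coordinate charts of radius comparable to the injectivity radius.

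The main obstacle is bookkeeping rather than conceptual: one must verify that all constants depend only on $\wh{g}$ (not on the specific solution) and that the covariant commutator terms generated by $[\nabla^k,\Delta]$ on tensors can be controlled uniformly in $k$ using the bounded geometry. Since the PE geometry is as well-behaved as Bamler's symmetric-space geometry for these purposes -- bounded covariant derivatives of curvature, uniformly positive injectivity radius, and a global Sobolev inequality -- no new analytic input beyond what is in \cite{bamler_symm_spaces} is required, and the conclusion follows as stated.
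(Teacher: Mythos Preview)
The paper does not give its own proof of this lemma; it is stated as a direct citation and slight rephrasing of Lemma~6.2 in \cite{bamler_symm_spaces}, with the implicit claim that Bamler's argument carries over to the PE setting. Your proposal is therefore not merely consistent with the paper's approach but in fact supplies the verification the paper omits: you correctly identify that the only geometric inputs Bamler uses are uniformly bounded covariant derivatives of curvature, a uniform lower injectivity-radius bound, and a global Sobolev inequality, all of which hold on a $C^{2,\alpha}$ PE manifold (the curvature bounds via the bootstrapping in Lemma~\ref{isomorphism_result}, the rest standard for AH geometry), and your sketch of the energy hierarchy with a time cutoff is exactly Bamler's scheme.
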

    Now we can prove our stability result Theorem \ref{mainthm_stability}, which we restate below for the reader's convenience. Here, and in the proof, all the norms and function spaces are defined with respect to the background metric $\wh{g}$ unless otherwise specified. 
    \begin{theorem}\label{pe_stability}
%
 Let $\left(M,\wh{g}\right)$ be a Poincar\'e--Einstein manifold of class $C^{2,\alpha}$.  If $\wh{g}$ is a local maximizer of the expander entropy $\mu_{\mathrm{AH},\wh{g}}$, then for any $L^2\cap L^{\infty}$-neighborhood $  \mathcal{U} $ of $\wh{g}$ there exists another $L^2\cap L^{\infty}$-neighborhood $\mathcal{V} \subset \mathcal{U}$ of  $\wh{g}$ with the following property:\\
Any Ricci flow $g_t$ starting from some $g_0 \in \mathcal{V}$ exists for all time and there exists a family of diffeomorphisms $\varphi_t$ such that $\varphi_t^*g_t$ stays in $\mathcal{U}$ for all time and converges in all derivatives to a  Poincar\'e--Einstein metric $g_{\infty}$ as $t\to\infty$.
      The convergence is polynomial, i.e.\ for all $k\in\N$, there exist constants $C, \beta > 0$ such that
        \begin{equation*}
            \norm{\wh{g} - g_\infty}_{C^k} \leq C\left(t+1\right)^{-\beta}
        \end{equation*}
        for all $t \geq 1$.
    \end{theorem}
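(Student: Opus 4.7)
The plan is to follow the Łojasiewicz--Simon based approach used by Haslhofer--Müller \cite{HM14} and the first author \cite{kroncke_Einstein} in the compact setting, with Bamler's Ricci--DeTurck theory supplying short-time existence and parabolic regularity in place of compactness. I would work with the Ricci--DeTurck flow \eqref{rdtf}, which is gauge-equivalent to the Ricci flow \eqref{AH_Ricci_flow}. For $g_0 = \hat{g} + h_0$ in a sufficiently small $L^2\cap L^\infty$-ball around $\hat{g}$, Proposition \ref{rdtf_short_time} produces a smooth solution $g(t) = \hat{g} + h(t)$ on $[0,\tau]$ with $\|h(t)\|_{L^\infty}\leq C\|h_0\|_{L^\infty}$, and Lemma \ref{rdtf_hl_bounds} applied with $\ell = k > n/2+2$ places $g(\tau)$ in the $H^k$-neighborhood of $\hat{g}$ where the entropy $\mu_{AH,\hat{g}}$ is analytic and the Łojasiewicz--Simon inequality of Theorem \ref{loj_ineq_thm} applies.

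For $t \geq \tau$, set $H(t) := \mu_{AH,\hat{g}}(\hat{g}) - \mu_{AH,\hat{g}}(g(t))$, which is nonnegative (by the assumed local maximality of $\hat{g}$, after shrinking the initial ball so that $g(t)$ remains in the maximizing neighborhood) and nonincreasing, with
\begin{equation*}
-H'(t) \geq c\,\|\nabla\mu_{AH,\hat{g}}(g(t))\|_{L^2}^2.
\end{equation*}
Combined with Theorem \ref{loj_ineq_thm}, this yields the differential inequality $-H'(t) \geq c H(t)^{2-\theta}$, giving polynomial decay $H(t) \leq C(t+1)^{-1/(1-\theta)}$ (or exponential decay if $\theta = 1$). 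Moreover, since the Ricci--DeTurck direction is comparable to $\nabla\mu_{AH,\hat{g}}$ modulo the $\mathrm{Diff}(M)$-orbit (which the slice theorem, Proposition \ref{entropy_analytic_extension}, lets us disregard), the standard Łojasiewicz trick
\begin{equation*}
\|\partial_t g(t)\|_{L^2} \leq \frac{C(-H'(t))}{\|\nabla\mu_{AH,\hat{g}}(g(t))\|_{L^2}} \leq C\,\frac{-H'(t)}{H(t)^{(2-\theta)/2}}
\end{equation*}
integrates (since $(2-\theta)/2 < 1$) to $\int_\tau^T \|\partial_t g(t)\|_{L^2}\,dt \leq C H(\tau)^{\theta/2}$ uniformly in $T$, with the right-hand side arbitrarily small for small initial data.

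This integrated $L^2$-bound feeds a continuation argument keeping $g(t)$ inside the prescribed $L^2\cap L^\infty$-neighborhood of $\hat{g}$: the $L^2$-smallness is immediate, and the $L^\infty$-smallness follows by iterating Proposition \ref{rdtf_short_time} on short intervals $[t, t+\tau]$, whose $L^\infty$-control is driven by the (small) perturbation at the starting time. Hence the flow exists for all $t \geq 0$; $g(t)$ is $L^2$-Cauchy and, by parabolic smoothing (a reapplication of Lemma \ref{rdtf_hl_bounds}), converges smoothly to a limit $g_\infty$ with $\nabla\mu_{AH,\hat{g}}(g_\infty)=0$, which is therefore Poincaré--Einstein by Corollary~5.18 of \cite{poincare_einstein_entropy}. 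A second integration of the Łojasiewicz bound on $[t,\infty)$, combined with interpolation between the resulting $L^2$-decay and uniform $C^{k+1}$-bounds, yields the asserted polynomial $C^k$-rate. The diffeomorphisms $\varphi_t$ are obtained by integrating the DeTurck vector field, converting Ricci--DeTurck back into Ricci flow modulo diffeomorphism.

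The main technical obstacle is closing the continuation argument by simultaneously propagating the $L^\infty$-smallness required for Bamler's short-time estimates and the $H^k$-smallness required for the Łojasiewicz--Simon inequality; the interplay between the integrated $L^2$-distance estimate above and Bamler's pointwise bounds applied on overlapping short intervals is exactly what makes this coupling close.
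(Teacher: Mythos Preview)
Your overall strategy—Bamler's short-time theory to land in an $H^k$-neighborhood, then the \L ojasiewicz--Simon inequality to drive a length estimate and continuation argument—matches the paper's. There is, however, one genuine gap and one place where the paper's execution is tighter.

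The gap is in your line
\[
\|\partial_t g(t)\|_{L^2} \leq \frac{C(-H'(t))}{\|\nabla\mu_{AH,\hat g}(g(t))\|_{L^2}}.
\]
Along the Ricci--DeTurck flow, $\partial_t g = -2\bigl(\Ric_g + (n-1)g\bigr) - \mathcal{L}_{X_{\hat g}}g$, whereas $\nabla\mu_{AH,\hat g}(g) = \bigl(\Ric_g + \nabla^2 f_g + (n-1)g\bigr)e^{-f_g}$; these differ by a Lie-derivative term and there is no reason the $L^2$-norms are comparable. Invoking the slice theorem to ``disregard'' the orbit direction does not fix this: Proposition \ref{entropy_analytic_extension} gives a transverse slice for \emph{metrics}, not a pointwise estimate on the flow velocity. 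The paper resolves this by, for $t\geq\tau$, pulling back by the diffeomorphisms generated by $-X_{\hat g}(g-\hat g)-\nabla^{g} f_{g}$, so that the modified flow satisfies $\partial_t\tilde g = -2\bigl(\Ric_{\tilde g}+(n-1)\tilde g+\nabla^2 f_{\tilde g}\bigr)$, which \emph{is} (up to the harmless factor $e^{-f}\approx 1$) the entropy gradient. Only after this gauge change does the displayed inequality hold.

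On the continuation argument, your plan to propagate $L^\infty$-smallness by iterating Proposition \ref{rdtf_short_time} on short intervals is delicate (the constant $C$ there may exceed $1$) and relies on feeding the $L^2$-bound through Lemma \ref{rdtf_hl_bounds} at every step. The paper avoids this coupling entirely: it interpolates \emph{before} integrating, writing $\|\partial_t\tilde h\|_{H^k}\leq C\|\partial_t\tilde h\|_{L^2}^{1-\eta}$ (with the $H^\ell$-factor bounded via Lemma \ref{rdtf_hl_bounds}), and then derives directly
\[
-\frac{d}{dt}\bigl|\mu_{AH,\hat g}(\tilde g(t))\bigr|^{\sigma}\geq \frac{\sigma}{C'}\|\partial_t\tilde h(t)\|_{H^k},
\]
so that the integrated length bound is already in $H^k$. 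Since $k>n/2$, Sobolev embedding immediately controls $L^\infty$, and the continuation closes in one step. This is both simpler and avoids the bootstrap you flag as the ``main technical obstacle.''
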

    \begin{proof}
        Let $\mathcal{U}$ be an $\eps$-neighbhorhood of $\wh{g}$. For any $g_0 \in \mathcal{V}$, consider the Ricci de Truck flow starting from $g_0$. Proposition \ref{rdtf_short_time} gives existence of \eqref{rdtf} for all $t \in \left[0,\tau\right]$, for some $\tau < \infty$.
        
   Let $k>n/2+2$ and $\mathcal{B}_{r}$ be the ball of radius $r$ around $\wh{g}$ with respect to the $H^k$-norm. Let $\epsilon>0$ be so small
that $\mathcal{B}_{\epsilon}\subset\mathcal{U}$ and such 
    that $\mu_{\mathrm{AH},\wh{g}}(g)\leq \mu_{\mathrm{AH},\wh{g}}(\wh{g})$  and Theorem \ref{loj_ineq_thm} holds for all metrics $g\in \mathcal{B}_{\epsilon}$. Furthermore, we may also assume that we can apply Lemma \ref{rdtf_hl_bounds} to all $g_0\in \mathcal{B}_{\epsilon}$.
        
    By  Lemma \ref{rdtf_hl_bounds}, we can take $\mathcal{V}$ so small that the Ricci-de Turck flow $g(t)$ starting at $g_0\in \mathcal{V}$ stays in $\mathcal{U}$ for $t\in [0,\tau]$ and in $\mathcal{B}_{\epsilon/2}$ for $t\in [\tau/2,\tau]$.
%
      Let $T' \geq \tau$ be the maximal time such that any Ricci-DeTurck flow ${g\left(t\right)}$ starting in $\mathcal{V}$ exists and there exists a family of diffeomorphisms $\varphi_{t}$ such that $\varphi_t^*g(t)$ stays within $\mathcal{B}_{\epsilon}$ for all $t \in \left[\tau,T'\right)$.\\
        By the definition of $T'$, we have
        \begin{equation*}
            \sup_M \abs{\Rm_{g\left(t\right)}}_{g\left(t\right)} \leq C_0
        \end{equation*}
        for all $t \in \left[\tau,T'\right)$. Lemma \ref{rdtf_hl_bounds} implies that for all $\ell \in \mathbb{N}$, there is a constant $C_{\ell}$ such that
        \begin{equation}\label{shi_bounds}
            \sup_M \abs{\nabla^\ell \Rm_{g\left(t\right)}}_{g\left(t\right)} \leq C_\ell
        \end{equation}
        for all $t \in \left[\tau,T'\right)$. Then, as $f_g$ satisfies the elliptic equation
        \begin{equation*}
            2\Lap_g f_g + \abs{\nabla f_g}^2_g - \Sc_g - n\left(n-1\right) + 2\left(n-1\right) f_g = 0,
        \end{equation*}
        Corollary \ref{cor_Lap_iso} tells us that
        \begin{equation}\label{minimizer_bounds}
            \sup_M \abs{\nabla^\ell f_{g\left(t\right)}}_{g\left(t\right)} \leq C'_\ell
        \end{equation}
        for all $t \in \left[\tau,T'\right)$. Note that all the above estimates are diffeomorphism invariant.\\
        Now we modify the flow for $t \geq \tau$ to be more suited for the use of our \L ojasiewicz--Simon inequality from Theorem \ref{loj_ineq_thm}. For this, consider the family of diffeomorphisms $\left\{\varphi_t\right\}_{t \in \left[\tau,T'\right)}$ generated by $-    X_{\wh{g}}\left(g\left(t\right)-\wh{g}\right)-\nabla^{g\left(t\right)}f_{g\left(t\right)}$ with $\varphi_\tau = \mathrm{id}_M$. For $t \in \left[\tau,T'\right)$, set $\wt{g}\left(t\right) := \varphi^\ast_t g\left(t\right)$ and $\wt{h}\left(t\right) := \varphi^\ast_t g\left(t\right) - \wh{g}$. This yields a \emph{modified} Ricci flow. In particular, \eqref{minimizer_bounds} allows us to replicate the arguments in \cite{bamler_cusps,bamler_symm_spaces} to deduce that Proposition \ref{rdtf_short_time} and Lemma \ref{rdtf_hl_bounds} still hold for the modified flow. Let $\wt{T} \in \left(\tau,T'\right]$ be the maximal time of such that this new flow exists and stays within $\mathcal{U}$. Then, for all $t \in \left[\tau,\wt{T}\right)$,  we have
        \begin{equation*}
            \partial_t \wt{h}\left(t\right) = \partial_t \wt{g}\left(t\right) = -2\left(\Ric_{\wt{g}\left(t\right)} + \left(n-1\right)\wt{g}\left(t\right) + \nabla^2 f_{\wt{g}\left(t\right)}\right).
        \end{equation*}
        In all, we now have the following:
        \begin{equation*}
            \wt{g}\left(t\right) = \begin{cases}
                g\left(t\right)~~&\mathrm{for}~t \in \left[0,\tau\right]\\
                \varphi^\ast_t g\left(t\right)~~&\mathrm{for}~t \in \left[\tau,\wt{T}\right)
            \end{cases}
        \end{equation*}
        and analogously for $\wt{h}\left(t\right)$. 
        By interpolation, (see e.g.\ \cite[Corollary 12.7]{hamilton_interpolation}), we get for every small $\eta > 0$ a large
         $\ell \in \mathbb{N}$ and a constant $C' > 0$ such that
        \begin{equation}\label{stability_interp_bounds}
            \norm{\partial_t \wt{h}\left(t\right)}_{H^k} \leq \norm{\partial_t \wt{h}\left(t\right)}^\eta_{H^\ell} \norm{\partial_t \wt{h}\left(t\right)}^{1-\eta}_{L^2} \leq C' \norm{\partial_t \wt{h}\left(t\right)}^{1-\eta}_{L^2}.        \end{equation}
 Note that the second inequality follows from Lemma \ref{rdtf_hl_bounds}. 
Let  $\theta$ be the exponent in the \L ojasiewicz--Simon inequality from Theorem \ref{loj_ineq_thm} and choose $\eta>0$ such that we have  $\sigma := \theta - \eta + \theta \eta > 0$. Note that $\sigma - 1 = \left(\theta - 1\right)\left(1-\eta\right)$. Then Lemma $5.20$ from \cite{poincare_einstein_entropy}, as well as \eqref{stability_interp_bounds} and Theorem \ref{loj_ineq_thm} tell us
        \begin{align*}
            \frac{d}{dt}\mu_{\mathrm{AH},\wh{g}}\left(\wt{g}\left(t\right)\right) &= \norm{\Ric_{\wt{g}\left(t\right)} + \left(n-1\right)\wt{g}\left(t\right) + \nabla^2 f_{\wt{g}\left(t\right)}}^{1+\eta}_{L^2} \norm{\partial_t\wt{h}\left(t\right)}^{1-\eta}_{L^2}\\
            &\geq \frac{1}{C'}\norm{\Ric_{\wt{g}\left(t\right)} + \left(n-1\right)\wt{g}\left(t\right) + \nabla^2 f_{\wt{g}\left(t\right)}}^{1+\eta}_{L^2} \norm{\partial_t\wt{h}\left(t\right)}_{H^k}\\
            &\geq \frac{1}{C'}\abs{\mu_{\mathrm{AH},\wh{g}}\left(\wt{g}\left(t\right)\right)}^{\left(1-\theta\right)\left(1+\eta\right)}\norm{\partial_t\wt{h}\left(t\right)}_{H^k}.
        \end{align*}
        Then, since $\mu_{\mathrm{AH},\wh{g}}\left(\wt{g}\left(t\right)\right) \leq \mu_{\mathrm{AH},\wh{g}}\left(\wh{g}\right)=0$ by assumption, we can use Theorem \ref{loj_ineq_thm} and the inequality we just derived to deduce that
        \begin{align*}
            -\frac{d}{dt}\abs{\mu_{\mathrm{AH},\wh{g}}\left(\wt{g}\left(t\right)\right)}^\sigma &= \sigma \abs{\mu_{\mathrm{AH},\wh{g}}\left(\wt{g}\left(t\right)\right)}^{\sigma-1} \frac{d}{dt} \mu_{\mathrm{AH},\wh{g}}\left(\wt{g}\left(t\right)\right)\geq \frac{\sigma}{C'}\norm{\partial_t \wt{h}\left(t\right)}_{H^k}.
        \end{align*}
        Integrating this from $\tau$ to $\wt{T}$ yields
        \begin{equation}\label{loj_stability_application}
            \int^{\wt{T}}_{\tau} \norm{\partial_t \wt{h}\left(t\right)}_{H^k\left(M\right)} dt \leq \frac{C'}{\sigma}\left(\abs{\mu_{\mathrm{AH},\wh{g}}\left(\wt{g}\left(\tau\right)\right) - \mu_{\mathrm{AH},\wh{g}}\left(\wh{g}\right)}^\sigma\right) < \frac{\eps}{4}, 
        \end{equation}
        possibly after further shrinking $\mathcal{V}$. Applying the triangle inequality then yields
        \begin{equation*}
            \norm{\wh{g} - \wt{g}\left(\wt{T}\right)}_{H^k} \leq \norm{\wh{g} - \wt{g}\left(\tau\right)}_{H^k} + \norm{\wt{g}\left(\tau\right) - \wt{g}\left(\wt{T}\right)}_{H^k} < \frac{\eps}{2}.
        \end{equation*}
        We therefore must have $\wt{T} = T' = T = \infty$, which implies $\wt{g}\left(t\right) \rightarrow g_\infty$ for some limit metric $g_\infty \in \mathcal{U}$. Note that this is also due to another use of Lemma \ref{rdtf_hl_bounds} to ensure $\norm{\wt{h}\left(t\right)}_{L^\infty\left(M\right)} < \frac{\eps}{4}$.
         \\
        Using Theorem \ref{loj_ineq_thm} again, one computes that, for $t \geq \tau$,
        \begin{equation*}
            \frac{d}{dt} \abs{\mu_{\mathrm{AH},\wh{g}}\left(\wt{g}\left(t\right)\right) - \mu_{\mathrm{AH},\wh{g}}\left(\wh{g}\right)}^{\theta-1} \geq -C''
        \end{equation*}
        for some constant $C'' > 0$. Integrating this with respect to time yields, for $t \geq 1$,
        \begin{equation}\label{entropy_conv_rate}
            \abs{\mu_{\mathrm{AH},\wh{g}}\left(\wt{g}\left(t\right)\right) - \mu_{\mathrm{AH},\wh{g}}\left(\wh{g}\right)} \leq C''\left(t+1\right)^{-\frac{1}{1-\theta}}.
        \end{equation}
        Sending $t \rightarrow \infty$ tells us $\mu_{\mathrm{AH},\wh{g}}\left(g_\infty\right) = \mu_{\mathrm{AH},\wh{g}}\left(\wh{g}\right)$. Since $\wh{g}$ is a local maximum of the entropy, so is $g_\infty$. Corollary $5.18$ from \cite{poincare_einstein_entropy} then tells us $g_\infty$ must be a Poincar\'e--Einstein metric.\\
        Finally, combining \eqref{loj_stability_application} and \eqref{entropy_conv_rate} yields the claimed convergence rate:
        \begin{equation*}
            \norm{\wt{g}\left(t\right) - \wh{g}}_{H^k} \leq C\abs{\mu_{\mathrm{AH},\wh{g}}\left(\wt{g}\left(t\right)\right) - \mu_{\mathrm{AH},\wh{g}}\left(\wh{g}\right)}^\sigma \leq C'''\left(t+1\right)^{-\frac{\sigma}{1-\theta}},
        \end{equation*}
        for a constant $C''' > 0$ and $t \geq \tau$. Using Lemma \ref{rdtf_hl_bounds} and Sobolev embedding yields the desired convergence in all derivatives.
    \end{proof}
    Now for the instability theorem:
    \begin{theorem}\label{pe_instability}
        Let $\left(M,\wh{g}\right)$ be a Poincar\'e--Einstein manifold of class $C^{2,\alpha}$. Assume that $\wh{g}$ is \emph{not} a local maximizer of the relative entropy $\mu_{\mathrm{AH},\wh{g}}$. Then there exists a non-trivial ancient Ricci flow $\left\{g\left(t\right)\right\}_{t \in \left(-\infty,0\right]}$ and a family of diffeomorphisms $\left\{\varphi_t\right\}_{t \in \left(-\infty,0\right]}$  such that $\varphi_t^*g(t)\to\wh{g}$ in all derivatives as $t \rightarrow -\infty$.
    \end{theorem}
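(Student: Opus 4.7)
The plan is to mimic the standard instability construction (as in \cite{HM14, kroncke_Einstein}) adapted to the AH setting: produce a sequence of solutions to the modified Ricci flow whose initial data approach $\wh{g}$ but whose entropy lies strictly above $\mu_{\mathrm{AH},\wh{g}}(\wh{g})$, and then use a backwards time shift together with the \L ojasiewicz-Simon inequality to extract an ancient limit flow emerging from $\wh{g}$. Since $\wh{g}$ is not a local maximizer of $\mu_{\mathrm{AH},\wh{g}}$, there is a sequence of smooth metrics $g_i\to\wh{g}$ in $H^k$ with $y_i:=\mu_{\mathrm{AH},\wh{g}}(g_i)-\mu_{\mathrm{AH},\wh{g}}(\wh{g})>0$ and $y_i\to 0$. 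I would run from each $g_i$ the modified Ricci-DeTurck flow $\tilde{g}_i(t)$ used in the proof of Theorem \ref{pe_stability} (incorporating the diffeomorphism generated by $-X_{\wh{g}}(\tilde g_i(t)-\wh g)-\nabla^{\tilde g_i(t)}f_{\tilde g_i(t)}$), along which $\mu_{\mathrm{AH},\wh{g}}$ is monotonically nondecreasing, so $\mu(\tilde g_i(t))-\mu(\wh g)\geq y_i$ for all $t$ in the existence interval.

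Next, fix a small radius $\eps>0$ so that Theorem \ref{loj_ineq_thm} and Lemma \ref{rdtf_hl_bounds} hold in $\mathcal{B}_{2\eps}(\wh g)$, and define the exit time
\begin{equation*}
T_i:=\sup\{t\geq 0:\tilde g_i(s)\in \mathcal{B}_{\eps}(\wh g)\text{ for all }s\in[0,t]\}.
\end{equation*}
The crucial claim is $T_i<\infty$. To see this, combine $\tfrac{d}{dt}\mu=2\|\nabla\mu\|_{L^2}^2$ with the \L ojasiewicz-Simon inequality to get $\tfrac{d}{dt}(y^{\theta/2})\geq c\|\partial_t\tilde g_i\|_{L^2}$, so the $L^2$-path length of $\tilde g_i$ over $[0,T_i]$ is bounded below by $y_i^{\theta/2}-y(T_i)^{\theta/2}$ being negative enough to prevent unbounded stay near $\wh g$; more precisely, if $\tilde g_i$ remained in $\mathcal{B}_\eps$ for all time then the ODE $y'\geq Cy^{2-\theta}$ together with $y(0)=y_i>0$ would force $y(t)\to\infty$ in finite time, contradicting the uniform bound on $\mu$ in $\mathcal{B}_\eps$. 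By continuous dependence on the initial data (again through Lemma \ref{rdtf_hl_bounds}), $T_i\to\infty$ as $i\to\infty$.

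Now time-shift: set $\bar g_i(t):=\tilde g_i(t+T_i)$ for $t\in[-T_i,0]$. Each $\bar g_i$ is a modified Ricci flow with $\bar g_i(0)\in\partial\mathcal{B}_\eps(\wh g)$, and Lemma \ref{rdtf_hl_bounds} yields uniform $C^\infty_{\mathrm{loc}}$ bounds, so a diagonal subsequence converges to a smooth limit flow $g_\infty(t)$ defined on $(-\infty,0]$ with $g_\infty(0)\in\overline{\mathcal{B}_\eps(\wh g)}\setminus\{\wh g\}$, which witnesses non-triviality. Running the argument from the stability proof \emph{backward} in time, the estimate $\tfrac{d}{dt}(y^{\theta/2})\geq c\|\partial_t\tilde g_i\|_{L^2}$, combined with interpolation as in \eqref{stability_interp_bounds}, gives
\begin{equation*}
\int_{t_1}^{0}\|\partial_t\bar g_i(s)\|_{H^k}\,ds\leq C\bigl(y(\bar g_i(0))^{\sigma}-y(\bar g_i(t_1))^{\sigma}\bigr)
\end{equation*}
for a suitable $\sigma>0$. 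Since $y(\bar g_i(t_1))\geq y(\bar g_i(-T_i))=y_i\to 0$, sending $i\to\infty$ shows that for any $t_1\in(-\infty,0]$ the $H^k$-distance of $g_\infty(t_1)$ from $\wh g$ is controlled by $y(g_\infty(0))^\sigma-y(g_\infty(t_1))^\sigma$, which tends to zero as $t_1\to-\infty$ because $\mu(g_\infty(t_1))\searrow \mu(\wh g)$ by monotonicity and the fact that $\mu$ is bounded below along the limit flow only by $\mu(\wh g)$. Finally, converting $\bar g_i$ back to a genuine Ricci flow by pulling off the accumulated diffeomorphism produces the family $\varphi_t$ and the desired ancient solution $g(t)$ with $\varphi_t^\ast g(t)\to\wh g$ in all derivatives as $t\to-\infty$ via Sobolev embedding and the smoothing estimates.

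The hardest step will be the second one: showing that the exit time is finite and that $T_i\to\infty$ in a quantitatively useful way, since both require a careful interplay between the \L ojasiewicz-Simon inequality, the monotonicity formula for $\mu_{\mathrm{AH},\wh{g}}$, and the parabolic regularity provided by Lemma \ref{rdtf_hl_bounds}. The backward path-length estimate is subtle in the AH setting because one must propagate $L^2$ control to $H^k$ control uniformly on intervals $[-T_i,0]$ of growing length without losing the exponential-type ODE decay; the interpolation trick from the stability proof, combined with the uniform smoothing bounds of Lemma \ref{rdtf_hl_bounds}, is what ultimately makes this work.
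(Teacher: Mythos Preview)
Your overall strategy matches the paper's: pick $g_i\to\wh{g}$ with $\mu_{\mathrm{AH},\wh{g}}(g_i)>\mu_{\mathrm{AH},\wh{g}}(\wh{g})$, run the modified flow until it exits a small $H^k$-ball at time $T_i$, time-shift, extract a limit via compactness, and show the limit is nontrivial and converges backward to $\wh{g}$. There are, however, two genuine gaps in the execution. First, non-triviality: the condition $\bar g_i(0)\in\partial\mathcal{B}_\eps$ does not pass to the limit, because the compactness you invoke gives at best $C^\infty_{\mathrm{loc}}$ or weak $H^k$ convergence, under which the $H^k$-norm can drop. The paper instead converts the exit condition into an entropy lower bound via the path-length estimate, obtaining $\eps\leq\norm{\wt g_i(\tau)-\wh g}_{H^k}+C\bigl(\mu_{\mathrm{AH},\wh{g}}(\bar g_i(0))-\mu_{\mathrm{AH},\wh{g}}(\wh g)\bigr)^\sigma$; the first term vanishes as $i\to\infty$, so the entropy gap at $t=0$ is bounded below uniformly, and \emph{this} survives the limit.

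Second, your backward-convergence argument is muddled. Integrating the path-length estimate over $[t_1,0]$ bounds $\norm{g_\infty(0)-g_\infty(t_1)}_{H^k}$, not the distance from $g_\infty(t_1)$ to $\wh g$; and your claim that $y(g_\infty(0))^\sigma-y(g_\infty(t_1))^\sigma\to0$ is simply false --- if $y(g_\infty(t_1))\to0$ it tends to $y(g_\infty(0))^\sigma>0$, and your justification for $\mu(g_\infty(t_1))\searrow\mu(\wh g)$ is circular. The correct move, which the paper carries out, is to integrate from the shifted initial time $-T_i$ to $t_1$, giving $\norm{\bar g_i(t_1)-g_i}_{H^k}\leq C\,y(\bar g_i(t_1))^\sigma$, and then to use your own ODE $y'\geq Cy^{2-\theta}$ \emph{quantitatively}: integrating it backward from the exit time yields the uniform bound $y(\bar g_i(t_1))\leq\bigl(-C_1t_1+y(\bar g_i(0))^{\theta-1}\bigr)^{-1/(1-\theta)}$, which decays as $t_1\to-\infty$ independently of $i$. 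Combined with $g_i\to\wh g$ and the triangle inequality, this is what actually forces $g_\infty(t)\to\wh g$.
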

    \begin{proof}
        Consider $\left\{g_i\right\} \subset \mathcal{R}^k\left(M,\wh{g}\right)$, a sequence of smooth metrics on $M$ such that $g_i \rightarrow \wh{g}$ in the $L^2 \cap L^\infty$-sense. Furthermore, suppose $\mu_{\mathrm{AH},\wh{g}}\left(g_i\right) > \mu_{\mathrm{AH},\wh{g}}\left(\wh{g}\right)$ for all $i$ and let $\wt{g}_i\left(t\right)$ (resp. $\wt{h}_i\left(t\right)$) be the modified Ricci-DeTurck flow, as defined in the proof of Theorem \ref{pe_stability}, starting at $g_i$ (resp. $h_i := g_i - \wh{g}$). Note that $\wt{g}_i\left(\tau\right)$ converges to $\wh{g}$ in the $H^k$-sense by Lemma \ref{rdtf_hl_bounds}.\\
        Now, take a sufficiently small neighborhood $\mathcal{U} \subset \mathcal{R}^k\left(M,\wh{g}\right)$ of $\wh{g}$ so that there is an $\eps > 0$ such that any $g \in \mathcal{U}$ is $2\eps$-close to $\wh{g}$ and Theorem \ref{loj_ineq_thm} holds on $\mathcal{U}$. Since Lemma \ref{rdtf_hl_bounds} tells us $\wt{g}_i\left(t\right) \in \mathcal{U}$ for all $i \gg 1$ and 
$t \geq \tau$, we can apply Theorem \ref{loj_ineq_thm} to deduce
        \begin{equation*}
            \frac{d}{dt}\left(\mu_{\mathrm{AH},\wh{g}}\left(\wt{g}_i\left(t\right)\right) - \mu_{\mathrm{AH},\wh{g}}\left(\wh{g}\right)\right)^{\theta-1} \geq -C_1
        \end{equation*}
        for all $i \gg 1$ and $t \geq \tau$, where $\theta$ is the exponent in Theorem \ref{loj_ineq_thm} and $C_1 > 0$ is a constant. Integrating this inequality in time, we find that
        \begin{equation}\label{entropy_diff_bound}
            \left[\left(\mu_{\mathrm{AH},\wh{g}}\left(\wt{g}_i\left(t\right)\right) - \mu_{\mathrm{AH},\wh{g}}\left(\wh{g}\right)\right)^{\theta-1} - C_1\left(s-t\right)\right]^{-\frac{1}{1-\theta}} \leq \mu_{\mathrm{AH},\wh{g}}\left(\wt{g}_i\left(s\right)\right) - \mu_{\mathrm{AH},\wh{g}}\left(\wh{g}\right),
        \end{equation}
        provided $\wt{g}_i\left(t\right) \in \mathcal{U}$. Therefore, there exists some time $t_i$ such that
        \begin{equation*}
            \norm{\wt{g}_i\left(t_i\right) - \wh{g}}_{H^k\left(M\right)} = \eps.
        \end{equation*}
        Note that $t_i \rightarrow \infty$ as $i \rightarrow \infty$, otherwise $\wt{g}_i\left(t_i\right) \rightarrow \wh{g}$ in the $H^k$-sense. Using the interpolation estimates from Corollary $12.7$ in \cite{hamilton_interpolation} like before, we have for any given $\eta > 0$ that 
        \begin{equation*}\label{instability_interp_bounds}
            \norm{\partial_t \wt{h}_i\left(t\right)}_{H^k} \leq \norm{\partial_t \wt{h}_i\left(t\right)}^\eta_{H^\ell} \norm{\partial_t \wt{h}_i\left(t\right)}^{1-\eta}_{L^2} \leq C_2 \norm{\partial_t \wt{h}_i\left(t\right)}^{1-\eta}_{L^2}
        \end{equation*}
for some $\ell\in\N$ and a constant $C_2 > 0$. Note that the $H^\ell$-norm is finite due to Lemma \ref{rdtf_hl_bounds}. Let $\eta>0$ be so small that $\sigma := \theta - \eta + \theta \eta > 0$. Then we can compute similarly to the proof of Theorem \ref{pe_stability} to get
        \begin{equation*}
            \frac{d}{dt}\left(\mu_{\mathrm{AH},\wh{g}}\left(\wt{g}_i\left(t\right)\right) - \mu_{\mathrm{AH},\wh{g}}\left(\wh{g}\left(t\right)\right)\right)^\sigma \geq \frac{\sigma}{C_2}\norm{\partial_t \wt{h}_i\left(t\right)}_{H^k}.
        \end{equation*}
        Integrating this new bound with respect to time from $t = \tau$ to $t = t_i$ allows us to deduce that, since $\mu_{\mathrm{AH},\wh{g}}\left(\wt{g}_i\left(t\right)\right) > \mu_{\mathrm{AH},\wh{g}}\left(\wh{g}\right)$,
        \begin{equation}\label{non_trivial_rf_estim}
            \eps = \norm{\wt{g}_i\left(t_i\right) - \wh{g}}_{H^k} \leq \norm{\wt{g}_i\left(1\right) - \wh{g}}_{H^k} + C_2\left(\mu_{\mathrm{AH},\wh{g}}\left(\wt{g}_i\left(t_i\right)\right) - \mu_{\mathrm{AH},\wh{g}}\left(\wh{g}\right)\right)^\sigma.
        \end{equation}
        Now we define a new sequence of metrics by applying a time shift. Let $\wt{g}^s_i\left(t\right) := \wt{g}_i\left(t + t_i\right)$ for $t \in \left[T_i,0\right]$ where $T_i := \tau - t_i \rightarrow -\infty$ as $i \rightarrow \infty$. Then, by the previous parts of the proof, we have
        \begin{align*}
            \norm{\wt{g}^s_i\left(t\right) - \wh{g}}_{H^k\left(M\right)} &\leq \eps\\
            \wt{g}^s_i\left(T_i\right) \rightarrow \wh{g},
        \end{align*}
        where $t \in \left[T_i,0\right]$ and the convergence is in the $H^k$-sense. Additionally, note that we can apply Hamilton's compactness theorem to pass to a convergent subsequence $\wt{g}^s_i\left(t\right) \rightarrow \wt{g}\left(t\right)$ where $\wt{g}\left(t\right)$ is an ancient Ricci flow and $t \in \left(-\infty,0\right]$. Furthermore,
        \begin{equation*}
            \partial_t \wt{g}\left(t\right) = -2\left(\Ric_{\wt{g}\left(t\right)} + \left(n-1\right)\wt{g}\left(t\right) + \nabla^2 f_{\wt{g}\left(t\right)}\right).
        \end{equation*}
        Next, for $t \in \left(-\infty,0\right]$, let $\varphi_t$ be the family of diffeomorphisms generated by the family of vector fields $-    X_{\wh{g}}\left(\wt{g}\left(t\right)-\wh{g}\right)-\nabla f_{\wt{g}\left(t\right)}$ and $\varphi_0 = \mathrm{id}_M$. Then $g\left(t\right) := \varphi^\ast_t \wt{g}\left(t\right)$ is a Ricci flow. Additionally, sending $i \rightarrow \infty$ in \eqref{non_trivial_rf_estim} tells us
        \begin{equation*}
            \eps \leq C_2\left(\mu_{\mathrm{AH},\wh{g}}\left(g\left(0\right)\right) - \mu_{\mathrm{AH},\wh{g}}\left(\wh{g}\right)\right)^\theta,
        \end{equation*}
        hence $g\left(t\right)$ is a non-trivial Ricci flow. We may then use Theorem \ref{loj_ineq_thm} and \eqref{entropy_diff_bound} to find that, for $t \in \left[T_i, 0\right]$,
        \begin{align*}
            \norm{\wt{g}^s_i\left(T_i\right) - \wt{g}^s_i\left(t\right)}_{H^k} &\leq C_3\left(\mu_{\mathrm{AH},\wh{g}}\left(\wt{g}_i\left(t+t_i\right)\right) - \mu_{\mathrm{AH},\wh{g}}\left(\wh{g}\right)\right)^\sigma\\
            &\leq C_3\left(-C_1t + \left(\mu_{\mathrm{AH},\wh{g}}\left(\wt{g}_i\left(t_i\right)\right) - \mu_{\mathrm{AH},\wh{g}}\left(\wh{g}\right)\right)^{\theta-1}\right)^{-\frac{\sigma}{1-\theta}}\\
            &\leq \left(-C_4t + C_5\right)^{-\frac{\sigma}{1-\theta}},
        \end{align*}
        where $C_3,C_4,C_5 > 0$ are constants. The triangle inequality therefore yields
        \begin{equation*}
            \norm{\wh{g} - \wt{g}\left(t\right)}_{H^k} \leq \norm{\wh{g} - \wt{g}^s_i\left(T_i\right)}_{H^k} + \left(-C_4 t + C_5\right)^{-\frac{\sigma}{1-\theta}} + \norm{\wt{g}^s_i\left(t\right) - \wt{g}\left(t\right)}_{H^k}.
        \end{equation*}
        Since $\wt{g}^s_i\left(T_i\right) \rightarrow \wh{g}$ and $\wt{g}^s_i\left(t\right) \rightarrow \wt{g}\left(t\right)$ as $i \rightarrow \infty$, we find that by sending $t \rightarrow -\infty$ we have $\wt{g}\left(t\right) \rightarrow \wh{g}$ in the $H^k$-sense. Hence $\left(\varphi^{-1}_t\right)^\ast g\left(t\right) \rightarrow \wh{g}$ in the $H^k$-sense as $t \rightarrow -\infty$. Convergence in all derivatives follows from Lemma \ref{rdtf_hl_bounds} and Sobolev embedding.
    \end{proof}

\end{document}